\newcommand\C{\mathbb{C}}
\newcommand\D{\mathrm{D}}
\newcommand\cI{\mathcal{I}}
\newcommand\cJ{\mathcal{J}}
\newcommand\cS{\mathcal{S}}
\newcommand\cT{\mathcal{T}}
\newcommand\codim{\operatorname{codim}}
\newcommand\rO{\mathrm{O}}
\newcommand\rSO{\mathrm{SO}}
\newcommand\rSP{\mathrm{SP}}
\newcommand\T{\mathcal{T}}
\newcommand\ba{\mathbf{a}}
\newcommand\bi{\mathbf{i}}
\newcommand\s{\mathbf{s}}
\newcommand\bt{\mathbf{t}}
\newcommand\x{\mathbf{x}}
\newcommand\y{\mathbf{y}}
\newcommand\z{\mathbf{z}}
\newcommand\rank{\mathrm{rank}}
\newcommand\pmi{\mathrm{pmi}}
\newcommand\mi{\mathrm{mi}}
\definecolor{DarkGreen}{rgb}{0.0, 0.5, 0.0}
\newcommand\E{\mathbb{E}}
\newcommand\R{\mathbb{R}}
\newcommand\<{\langle}
\renewcommand\>{\rangle}
\newcommand\off{\rm off}
\newcommand\independent{\protect\mathpalette{\protect\independenT}{\perp}}
\def\independenT#1#2{\mathrel{\rlap{$#1#2$}\mkern2mu{#1#2}}}
\DeclareMathOperator*{\argmin}{arg\,min}
\DeclareMathOperator{\diag}{diag}
\DeclareMathOperator{\Cov}{Cov}
\DeclareMathOperator{\Var}{Var}
\theoremstyle{plain}
\newtheorem{thm}{thm}[section]
\newtheorem{theorem}[thm]{Theorem}
\newtheorem{proposition}[thm]{Proposition}
\newtheorem{lemma}[thm]{Lemma}
\newtheorem{corollary}[thm]{Corollary}
\theoremstyle{definition}
\newtheorem{definition}[thm]{Definition}
\newtheorem{example}[thm]{Example}
\newtheorem{assumption}[thm]{Assumption}
\theoremstyle{remark}
\newtheorem{remark}[thm]{Remark}
\crefname{thm}{theorem}{theorems}
\Crefname{thm}{Theorem}{Theorems}
\crefname{theorem}{theorem}{theorems}
\Crefname{theorem}{Theorem}{Theorems}
\crefname{proposition}{proposition}{propositions}
\Crefname{proposition}{Proposition}{Propositions}
\crefname{lemma}{lemma}{lemmas}
\Crefname{lemma}{Lemma}{Lemmas}
\crefname{corollary}{corollary}{corollaries}
\Crefname{corollary}{Corollary}{Corollaries}
\crefname{conjecture}{conjecture}{conjectures}
\Crefname{conjecture}{Conjecture}{Conjectures}
\crefname{definition}{definition}{definitions}
\Crefname{definition}{Definition}{Definitions}
\crefname{example}{example}{examples}
\Crefname{example}{Example}{Examples}
\crefname{assumption}{assumption}{assumptions}
\Crefname{assumption}{Assumption}{Assumptions}
\crefname{remark}{remark}{remarks}
\Crefname{remark}{Remark}{Remarks}
\title[Beyond independent component analysis]{Beyond independent component analysis:\\identifiability and algorithms}
\author{\'Alvaro Ribot}
\address{\'{A}lvaro Ribot, Harvard University}
\email{aribotbarrado@g.harvard.edu}
\author{Anna Seigal}
\address{Anna Seigal, Harvard University}
\email{aseigal@seas.harvard.edu}
\author{Piotr Zwiernik}
\address{
Piotr Zwiernik, 
Universitat Pompeu Fabra and Barcelona School of Economics}\email{piotr.zwiernik@upf.edu}
\begin{document}

\begin{abstract}
Independent Component Analysis (ICA) is a classical method for recovering latent variables with useful identifiability properties. 
For independent variables, cumulant tensors are diagonal; relaxing independence yields tensors whose zero structure generalizes diagonality.
These models have been the subject of recent work in non-independent component analysis. We show that pairwise mean independence answers the question of how much one can relax independence: it is identifiable, any weaker notion is non-identifiable, and it contains the models previously studied as special cases. Our results apply to distributions with the required zero pattern at any cumulant tensor.
We propose an algebraic recovery algorithm based on least-squares optimization over the orthogonal group. Simulations highlight robustness: enforcing full independence can harm estimation, while pairwise mean independence enables more stable recovery. These findings extend the classical ICA framework and provide a rigorous basis for blind source separation beyond independence.
\end{abstract}

\maketitle

\section{Introduction}

Independent component analysis (ICA) is a tool for blind source separation. It turns linear mixtures into interpretable sources and underpins methods in signal processing, neuroscience, and econometrics \cite{back1997first, cardoso1998blind, makeig1995independent}.
Blind source separation 
seeks to estimate $A$ and $\s$ from observations of $\x =A\s$, where $\x$ and $\s$ are $n$-dimensional random vectors, the mixing matrix~$A\in \R^{n\times n}$ is fixed and invertible but unknown, and $\s$ is assumed to have mean zero 
and uncorrelated entries but is otherwise unknown.
With no further restrictions on the distribution of $\s$, the solution is not unique. ICA assumes that the entries of $\s$ are independent.
Then, the matrix $A$ is unique up to scaling and permutation of columns, provided at most one source is Gaussian \cite{comon1994independent}. 

In econometrics and other applications, there is growing interest in relaxing the assumption of independent sources \cite{garrote2024cumulant,hyvarinen2001topographic,jiang2025identification,lee2020testing,wooldridge1995selection}; see \cite{mesters2022non} for a detailed discussion. The idea is that full independence of the source variables is too strong. Relaxations of independence offer a more realistic description of real-world systems. 
However, relax the assumptions too much and $\x = A\s$ is no longer identifiable. The goal is to relax independence while preserving identifiability of $A$ and $\s$ \cite{mesters2022non,garrote2024cumulant}. 

\begin{remark}[Identifiability]
In the component analysis models $\x = A \s$ we consider,
rescaling and relabeling the source variables does not affect membership in the model. Hence, at best, we can recover $\s$ 
up to rescaling and reordering its coordinates,
with a corresponding scaling and permutation of the columns of $A$. We call a model identifiable if 
we can recover any  sufficiently general sources and mixing matrix, up to this relabeling and rescaling.
Sufficiently general, here, means that a certain polynomial in the entries of a higher-order cumulant of $\s$ does not vanish and that the matrix $A$ is invertible.
\end{remark}

One popular relaxation replaces independence—the assumption that conditioning gives no information on the distribution—by mean independence, the weaker assumption that conditioning gives no information on the expectation. Concretely, two random variables are \emph{mean independent} when knowing one of them does not affect the expected value of the other, i.e., $\E(x\mid y)=\E(x)$.  An illustration comes from the weak form of the Efficient-Market Hypothesis (EMH) \cite{fama1970efficient}: for excess returns $r_t$ and the information set $\mathcal F_{t-1}$ known at $t-1$, $\E(r_t\mid\mathcal F_{t-1})=0$. If a lagged sentiment index $s_{t-1}$ is part of $\mathcal F_{t-1}$, then~$\E(r_t\mid s_{t-1})=\E(r_t)=0$. Related conditions appear in classical measurement error, where the error term $u$ is assumed mean independent of the regressor $x$ ($\E(u\mid x)=0$) \cite{wooldridge2010econometric}, and in randomized controlled trials, where random assignment implies $\E[Y(0)\mid D]=\E[Y(0)]$ \cite[Chapter~1]{imbens2015causal}.
    
\begin{definition}
    For random variables $x$ and $y$, 
    $x$ is \emph{mean independent} of $y$ if $\E(x \mid y) = \E(x)$.
    A random vector $\x$ is \emph{pairwise mean independent} if $\E(x_i\mid x_j)=\E(x_i)$
    for all~$i\neq j$.
    A random vector $\x$ is \emph{mean independent} if $\E(x_i\mid\x_{\setminus i})=\E(x_i)$ for all $i$, where $\x_{\setminus i}$ is the vector~$\x$ with coordinate $x_i$ removed.
\end{definition}

Independence implies mean independence, mean independence implies pairwise mean independence, and all three imply uncorrelatedness. All notions are distinct. 
Mean independence and pairwise mean independence appear in~\cite{garrote2024cumulant,jiang2025identification,lee2020testing,mesters2022non,wooldridge1995selection,wooldridge2010econometric}, 
with varying terminology. In \cite{jiang2025identification,garrote2024cumulant,mesters2022non} they study mean independence of a random vector and the structure of its moments/cumulants. 
The papers \cite{wooldridge1995selection,wooldridge2010econometric} study conditional mean independence (e.g., $\E(x \mid y, z) = \E(x \mid y)$). In \cite{lee2020testing}, mean independence of two random variables is called conditional mean independence.

Our first main result generalizes the main result of \cite{comon1994independent} from independence to pairwise mean independence. It shows the identifiability of component analysis with pairwise mean independent source distributions.

\begin{theorem}[Identifiability of PMICA]\label{thm:PMICA}
    Consider the model $\x = A\s$ where $A \in \R^{n \times n}$ is invertible and $\s$ is a sufficiently general pairwise mean independent random vector. Then $A$ is identifiable from~$\x$ (up to permutation and scaling of columns).
\end{theorem}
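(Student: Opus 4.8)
The plan is to follow the strategy behind the classical identifiability theorem for independent sources, replacing its key input — diagonality of the cumulant tensor of $\s$ — by the weaker zero pattern forced by pairwise mean independence (PMI) at one higher cumulant, and then proving a matching uniqueness statement. \emph{Step 1: reduce to an orthogonal mixing matrix.} Rescaling each coordinate of $\s$ preserves PMI and is absorbed into the scaling ambiguity of $A$, so we may assume $\Cov(\s) = I$. Then $\Cov(\x) = AA^\top \succ 0$ is observable, and after replacing $\x$ by $\Cov(\x)^{-1/2}\x$ we reduce to $A = Q \in \rO(n)$, and it suffices to recover $Q$ up to a signed permutation. Equivalently: if $\x = Q\s = Q'\s'$ with $\s,\s'$ sufficiently general PMI vectors, then $R := Q'^\top Q \in \rO(n)$ satisfies $\s' = R\s$, and one must show that $R$ is a signed permutation.

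\emph{Step 2: reduce to a tensor statement.} Fix an order $d \ge 3$ at which $\s$ is sufficiently general. PMI gives $\E(s_i\,g(s_j)) = 0$ for $i \ne j$ and all $g$, hence $\kappa(s_i, s_j, \dots, s_j) = 0$ for any string of copies of $s_j$; thus the $d$-th cumulant tensor $T := \kappa^{(d)}(\s)$ lies in the linear space
\[
  V := \bigl\{\, T \in \Sym^d(\R^n) \ :\ T_{ij\cdots j} = 0 \ \text{for all } i \ne j \,\bigr\}.
\]
Cumulants transform multilinearly, $\kappa^{(d)}(R\s) = R \cdot T$ (the $d$-fold action on all modes), so PMI of $R\s$ forces $R \cdot T \in V$. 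Writing $(R \cdot T)_{ij\cdots j} = \langle r_i,\ T(\,\cdot\,, r_j, \dots, r_j)\rangle$ for the rows $r_1,\dots,r_n$ of $R$ (with $T(\,\cdot\,,r_j,\dots,r_j)$ the contraction of $d-1$ modes against $r_j$), the condition $R\cdot T \in V$ says exactly that $\{r_1,\dots,r_n\}$ is an \emph{orthonormal eigenbasis} of $T$: $T(\,\cdot\,,r_k,\dots,r_k) \in \R r_k$ for each $k$. Since $T \in V$, the standard basis $e_1,\dots,e_n$ is such an eigenbasis (the only surviving contraction is $T_{k\cdots k}$). So the theorem reduces to the following: for $T$ outside a proper subvariety of $V$, the only orthonormal eigenbases of $T$ are the signed permutations of $e_1,\dots,e_n$.

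\emph{Step 3: the uniqueness lemma (the crux).} Form the incidence variety $W = \{(R,T) \in \rO(n)\times V : R\cdot T \in V\}$. Every signed permutation $P$ preserves $V$, so $\{P\}\times V \subseteq W$ is irreducible of dimension $\dim V$; and since the only symmetric rank-one tensors in $V$ are multiples of the $e_k^{\otimes d}$, an orthogonal map preserving $V$ must permute $\{\pm e_k\}$, i.e.\ the stabiliser of $V$ in $\rO(n)$ is exactly the signed permutations. I would then take $T_0 = \sum_j \mu_j e_j^{\otimes d}$ with $\mu_j$ nonzero and generic — a diagonal tensor, hence in $V$ — and verify that its orthonormal eigenbases are exactly the signed permutations of $\{e_j\}$: an eigenvector satisfies $\mu_a v_a^{d-2} = \lambda$ on its support, and $n$ mutually orthogonal such vectors are forced (by genericity of the $\mu_j$) to have pairwise disjoint, hence singleton, supports; for $d = 3$ this is immediate, and in general it is the uniqueness of orthogonal tensor decompositions. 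Next, at a point $(P, T_0)$, the differential of $F(R,T) = \bigl((R\cdot T)_{ij\cdots j}\bigr)_{i\ne j}$ restricted to the skew-symmetric directions of $\rO(n)$ is $X \mapsto (\mu_j X_{ij})_{i\ne j}$, which is injective because $\mu_j \ne 0$ — the "mixed" derivative terms drop out exactly because $T_0$ is diagonal and $d \ge 3$ — so $dF(P,T_0)$ has rank $\binom{n}{2}$, its kernel is the tangent space of $\{P\}\times V$, and $W$ agrees with $\{P\}\times V$ near $(P,T_0)$. Hence no component of $W$ other than the $\{P\}\times V$ meets the fibre over $T_0$. Finally, a generic $T \in V$ has only finitely many eigenvectors (as any symmetric tensor does), hence finitely many orthonormal eigenbases, so $W \to V$ is generically finite; combining this with the previous point, and using compactness of $\rO(n)$ (so candidate matrices cannot run off) together with upper semicontinuity of fibre dimension, one concludes that the set of $T \in V$ admitting an orthonormal eigenbasis other than a signed permutation of $e_1,\dots,e_n$ is contained in a proper subvariety $\Sigma \subsetneq V$. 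Declaring $\s$ "sufficiently general" when $\kappa^{(d)}(\s)\notin\Sigma$ (and $A$ invertible) and running this back through Steps 1--2 completes the proof.

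The main obstacle is Step 3, and within it the upgrade from the pointwise statement at the diagonal tensor $T_0$ to a statement valid for generic $T \in V$: one has to rule out an entire continuum of spurious orthogonal matrices, which is precisely what the incidence-variety/fibre-dimension analysis — anchored by the rank computation of $dF(P,T_0)$ — is designed to do. A secondary point worth checking is that the hypothesis is non-vacuous: the $d$-th cumulant tensors realised by PMI distributions should form a Zariski-dense subset of $V$, so that "sufficiently general PMI source" is a genuinely generic condition rather than an empty one.
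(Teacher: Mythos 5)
Your Steps 1--2 coincide with the paper's reduction: whitening to $A\in\rO(n)$, the fact that PMI forces $\kappa_d(\s)\in V_{\pmi}$ (the paper's \Cref{thm:V_pmi}), multilinearity of cumulants, and the reformulation of $R\bullet \T\in V_{\pmi}$ as ``the rows of $R$ are an orthonormal eigenbasis of $\T$.'' The divergence is in Step 3: the paper does not prove the uniqueness statement there; it imports it as \Cref{thm:main-symmetric} from \cite{ribot2025orthogonal}, and then only has to check that generic tensors in $V_{\pmi}$ are realized by PMI sources (via \Cref{lem:general-odeco} and the mixture $\s^{(\alpha)}=\alpha\s+(1-\alpha)\s^{(0)}$). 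Your attempt to reprove that theorem by an incidence-variety argument is where the gap lies. The local analysis at $(P,\T_0)$ with $\T_0$ diagonal is fine (the differential $X\mapsto(\mu_j X_{ij})_{i\neq j}$ is indeed injective on skew-symmetric $X$, so $W$ coincides with $\{P\}\times V$ near $(P,\T_0)$), and generic finiteness of the fibres of $W\to V$ is fine. But these facts, together with compactness of $\rO(n)$ and semicontinuity of fibre dimension, only show that the bad locus $\{\T\in V:\ \exists R\notin\rSP(n),\ R\bullet\T\in V\}$ is a closed semialgebraic set avoiding a Euclidean neighborhood of the diagonal tensors. They do not rule out a ``bad'' irreducible component $Z\subset W$ of dimension $\dim V$ whose image under the (proper, real) projection is a full-dimensional closed semialgebraic subset of $V$ that simply misses that neighborhood; in that case the bad locus is Zariski-dense and the theorem in its intended form (non-vanishing of a polynomial, as in the paper's identifiability remark) fails. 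Over $\R$ you have properness but not the principle ``dominant image contains a dense Zariski-open set''; over $\C$ you regain that principle but $\rO(n,\C)$ is not compact and your local analysis and the description of the fibre over $\T_0$ must be redone for complex points. Bridging exactly this local-to-global step is the content of the cited theorem of \cite{ribot2025orthogonal}; this paper's own \Cref{thm:genericity-Vpmi} instead obtains explicit genericity conditions by a hands-on reduction to $2\times 2$ rotations via the normal form of orthogonal matrices, which is a different mechanism from your dimension count.

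Two smaller soft spots: your claim that for a generic diagonal $\T_0$ every orthonormal eigenbasis has singleton supports is asserted via ``uniqueness of orthogonal tensor decompositions,'' but odeco uniqueness does not by itself exclude orthonormal bases built from the many eigenvectors with larger support; one needs the explicit eigenvector classification and a polynomial non-vanishing argument in the $\lambda_j^{-1/(d-2)}$, as in the paper's \Cref{lem:general-odeco}. And the non-vacuity issue you flag at the end (that sufficiently general tensors of $V_{\pmi}$ are actually cumulants of PMI distributions) is not optional: it is part of the paper's proof of \Cref{thm:PMICA}, handled there by the independent-source construction and the mixing argument, so your proof would need to include it rather than defer it.
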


Two works lead to \Cref{thm:PMICA}. 
First, \cite{mesters2022non} introduced non-independent component analysis and showed how identifiability can be studied via 
moment or cumulant tensors. They studied independence via diagonal tensors and obtained first identifiability results for other zero patterns.
Second, \cite{ribot2025orthogonal} showed that a generic symmetric tensor with an orthogonal basis of eigenvectors has a unique such basis (up to signs). The existence of an orthogonal basis of eigenvectors corresponds to a prescribed zero pattern. 
We show that this zero pattern characterizes pairwise mean independent (PMI) distributions  and that sufficiently general PMI distributions have cumulants that are sufficiently general as tensors.

Sufficiently general in \Cref{thm:PMICA} means the non-vanishing of a certain polynomial in the entries of a higher-order cumulant of $\s$. A study of these genericity conditions is another main contribution of this paper.
In classical ICA, a simple genericity rule guarantees identifiability: at most one Gaussian source. 
Under PMI the genericity is more subtle: non-Gaussianity does not suffice. We provide low-order, checkable criteria: 
for $d=3$, uniqueness holds if and only if at most one third cumulant $\kappa_3(s_i)$ is zero; for $d=4$, uniqueness holds if and only if the fourth-order cumulants $\kappa_4(s_i)$ are all distinct (Theorem~\ref{thm:genericity-Vpmi}). For $d\ge5$ we obtain polynomial non-vanishing conditions. The conditions are illustrated in Figure~\ref{fig:genericity_binary}. 

\begin{figure}[ht]
  \centering
  \captionsetup[subfigure]{labelformat=empty}

  \begin{subfigure}[b]{0.115\textwidth}
    \includegraphics[width=\linewidth]{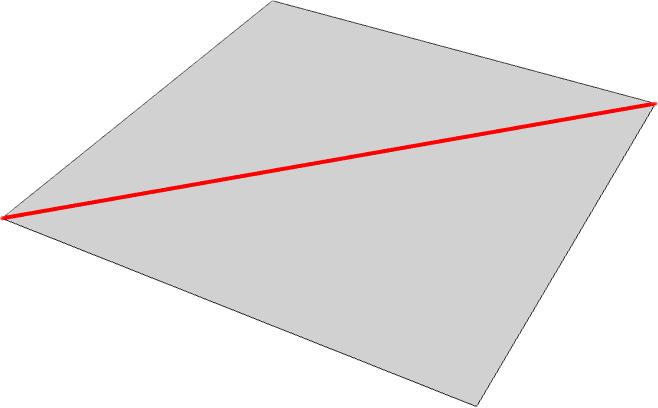}
    \caption{$d=2$}
  \end{subfigure}
  \hfill
  \begin{subfigure}[b]{0.115\textwidth}
    \includegraphics[width=\linewidth]{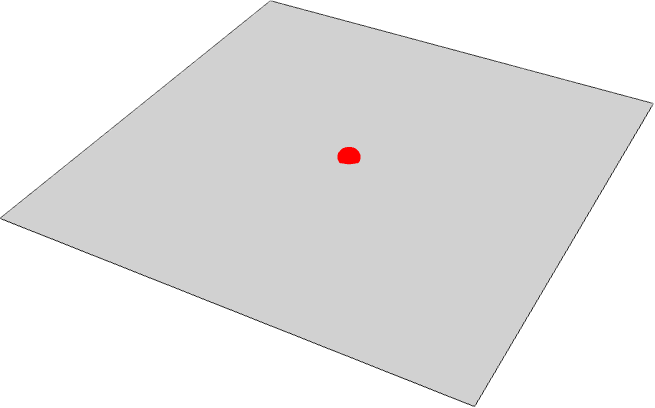}
    \caption{$d=3$}
  \end{subfigure}
  \hfill
  \begin{subfigure}[b]{0.115\textwidth}
    \includegraphics[width=\linewidth]{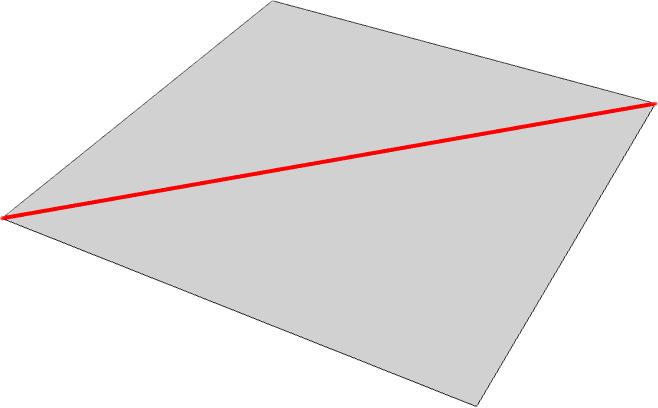}
    \caption{$d=4$}
  \end{subfigure}
  \hfill
  \begin{subfigure}[b]{0.115\textwidth}
    \includegraphics[width=\linewidth]{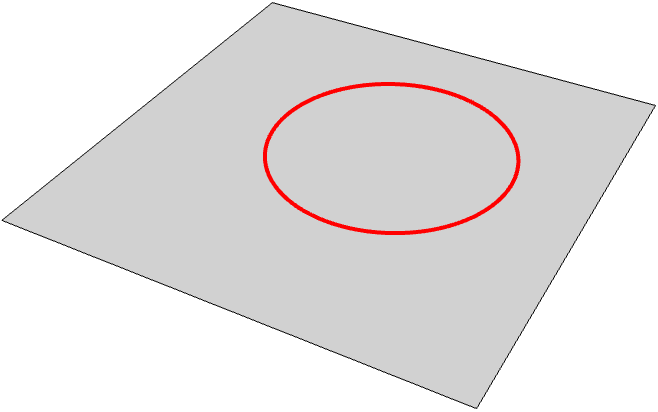}
    \caption{$d=5$}
  \end{subfigure}
  \hfill
  \begin{subfigure}[b]{0.115\textwidth}
    \includegraphics[width=\linewidth]{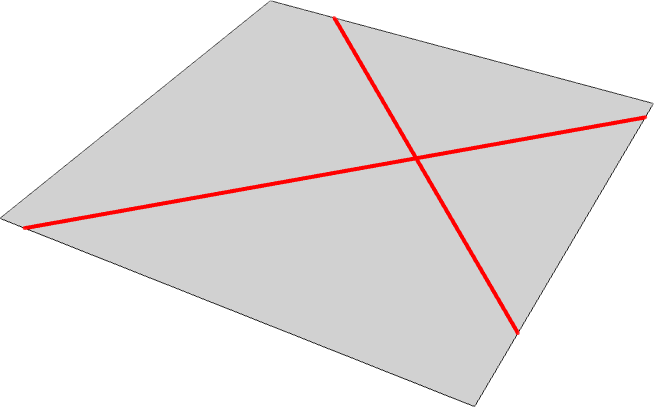}
    \caption{$d=6$}
  \end{subfigure}
  \hfill
  \begin{subfigure}[b]{0.115\textwidth}
    \includegraphics[width=\linewidth]{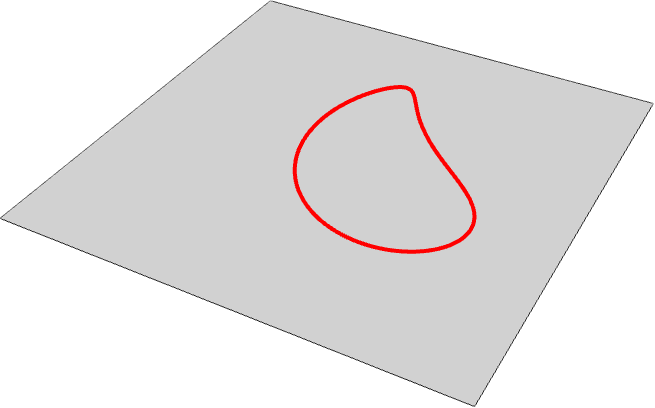}
    \caption{$d=7$}
  \end{subfigure}
  \hfill
  \begin{subfigure}[b]{0.115\textwidth}
    \includegraphics[width=\linewidth]{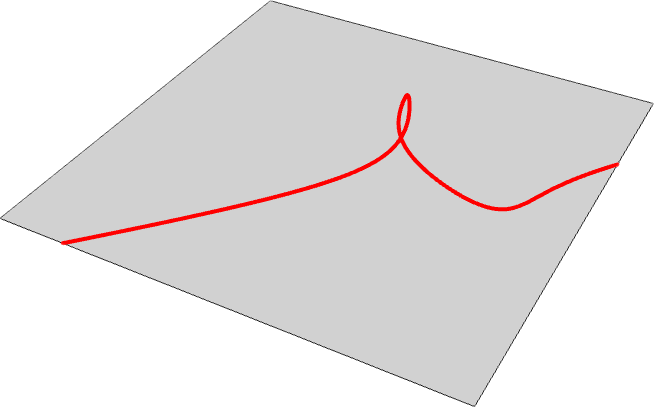}
    \caption{$d=8$}
  \end{subfigure}
  \hfill
  \begin{subfigure}[b]{0.115\textwidth}
    \includegraphics[width=\linewidth]{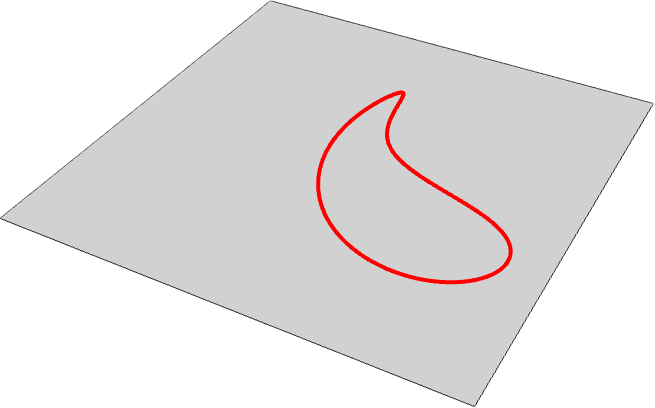}
    \caption{$d=9$}
  \end{subfigure}

  \caption{
  Genericity conditions in the space of $d$-th order cumulant tensors of pairwise mean independent distributions. The gray plane is the this space, and the red loci are tensors with a non-unique orthogonal basis of eigenvectors. The pictures are three-dimensional slices explained in \Cref{remark:genericity-large-d}.  }
  \label{fig:genericity_binary}
\end{figure}

For an $n$-dimensional random vector $\s$, the $d$-th cumulant is a symmetric tensor, an array of format $n \times \cdots \times n$ ($d$ times) whose entries are unchanged under permuting indices. 
We denote the space of such symmetric tensors by $S^d(\mathbb{R}^n)$ and the $d$th cumulant of $\s$ by $\kappa_d(\s)$.
Independence  implies diagonal cumulant tensors; see, e.g., ~\cite{zwiernik2015semialgebraic}.  
That is, the cumulants of independent variables $\s$ are a linear subspace of $S^d(\R^n)$
consisting of diagonal tensors 
\begin{equation}\label{eq:Vdiag}
    V_{\diag} \coloneqq \{ \T \in S^d(\R^n) \mid \T_{i_1 \dots i_d} \neq 0 \text{ only if } i_1 = \dots = i_d\}.
\end{equation}
There is a corresponding linear subspace for pairwise mean independence: 
\begin{equation}\label{eq:Vpmi}
    V_{\pmi} \coloneqq \{\T \in S^d(\R^n) \mid \T_{ij\dots j} = 0 \text{ for all } i \neq j\}.
\end{equation}
For a linear space $V\subset S^d(\R^n)$, we will write $V^{d,n}$ when the ambient space is not clear from the context.
We show in Theorem~\ref{thm:V_pmi} that the distributions whose cumulants lie in $V_{\pmi}$ for every $d$ are exactly the PMI distributions. 
When $d=2$ the two spaces coincide for all $n$, but they differ for higher~$d$: $\dim V_{\diag}=n$ for all $d$,
whereas $\dim V_{\pmi}=\binom{n+d-1}{d}-2 \binom{n}{2}$ for $d\ge3$. 
The set $V_{\pmi}$ was introduced in \cite[Conjecture 5.17]{mesters2022non} and used in \cite{ribot2025orthogonal} to prove generic uniqueness of an orthogonal basis of eigenvectors for tensors. See \Cref{tab:results-glance} for a summary of our contributions.

\begin{table}[htbp]
\centering
\renewcommand{\arraystretch}{1.2}
\small
\begin{tabular}{l c c}
\hline
\textbf{Assumption on sources} & \textbf{Cumulants of sources} & \textbf{Generic identifiability} \\
\hline
Independence & $V_{\diag}$ & yes \\
Between independence and PMI& $V$ with $V_{\diag}\subseteq V\subseteq V_{\pmi}$ & yes \\
Pairwise mean independence (PMI) & $V_{\pmi}$ & yes \\

Beyond PMI & $V$ with $V_{\pmi} \subsetneq V$ & no \\
\hline
\end{tabular}
\caption{Results at a glance. For $\x = A \s$, $A$ is generically identifiable if $\s$ is PMI (\Cref{thm:PMICA}) or if its $d$-th order cumulant tensor lies in~$V$
with $V_{\diag} \subseteq V \subseteq V_{\pmi}$ (\Cref{thm:general-ca}). 
Identifiability is lost if a mean independence condition is dropped (\Cref{thm:pmi-maximal}).
Linear spaces $V_{\diag}$ and $V_\pmi$ are the cumulant tensors of independent and pairwise mean independent distributions, respectively; see~\eqref{eq:Vdiag} and~\eqref{eq:Vpmi}.
}
\label{tab:results-glance}
\end{table}

Motivated by our identifiability results, we propose an algebraic algorithm to recover pairwise mean independent sources from cumulant tensors. The problem is formulated as a least-squares optimization on the orthogonal group, which we attempt to solve via Riemannian gradient descent (RGD). See \Cref{alg:cap}.

\begin{algorithm}[htbp]
\caption{\texttt{RGD-PMICA}: Finding PMI sources}\label{alg:cap}
\begin{algorithmic}
\Require Data matrix $X \quad \textcolor{gray}{\in \R^{N \times n}}$ \Comment{$N = $ \# samples, $n = $ \# features}
\State $X_w \gets \texttt{Whitening}(X) \quad \textcolor{gray}{ \in \R^{N \times n}}$ \Comment{Center the mean and decorrelate}
\State $\widehat{\kappa} \gets \texttt{CumulantTensor4}(X_w)$ \quad \textcolor{gray}{$\in S^4(\R^n)$} \Comment{Estimate fourth-order cumulant tensor}
\State $\widehat{Q} \gets \argmin_{Q \in \rO(n)} \sum_{i \neq j} [Q^\top \bullet \widehat{\kappa}]^2_{ijjj}$ \quad \textcolor{gray}{$\in \R^{n \times n}$} \Comment{Minimize distance to $V_{\pmi}$ with RGD}  
\Ensure $S \gets X_w \widehat{Q} \quad \textcolor{gray}{\in \R^{N \times n}}$ \Comment{PMI sources}
\end{algorithmic}
\end{algorithm}

A practical route to identifiability is to impose a \emph{generative} structure that forces certain entries of $\kappa_d(\s)$ to vanish. Popular examples are \emph{topographic ICA} \cite{hyvarinen2001topographic} and, more generally, \emph{correlated-energy} models $s_i=\sigma_i\varepsilon_i$ with independent $\varepsilon_i$ and exogenous scales $\sigma_i$, which imply PMI and hence $\kappa_d(\s)\in V_{\pmi}$ (see \Cref{sec:correlation-energies}).

The next result shows that any such family inherits generic identifiability from a single $d$-th order cumulant once $\kappa_d(\s)$ is sufficiently general in the corresponding linear space. As in Theorem~\ref{thm:PMICA}, a source distribution $\s$ is sufficiently general if the sources $\s = (s_1, \ldots, s_n)$ have at most one third cumulant $\kappa_3(s_i)$ equal to zero, or if all fourth order cumulants $\kappa_4(s_i)$ are distinct.

\begin{theorem}\label{thm:general-ca}
Consider the model $\x = A\s$ where $A \in \R^{n \times n}$ is invertible and $\s$ is a random vector with identity covariance matrix such that, for some $d \geq 3$, $\kappa_d(\s)$ is sufficiently general in $V$, where $V_{\diag} \subseteq V \subseteq V_{\pmi}$. Then, $A$ is identifiable from~$\kappa_d(\x)$ (up to permutation and sign-flip of columns).
\end{theorem}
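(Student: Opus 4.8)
The plan is to reduce to an orthogonal mixing matrix by whitening, to rewrite membership in $V_{\pmi}$ as a statement about orthogonal eigenbases, and then to invoke the generic uniqueness of such an eigenbasis from \cite{ribot2025orthogonal}. Two structural facts set this up. First, cumulants are multilinear, so $\kappa_d(\x) = A\bullet\kappa_d(\s)$, where $B\bullet\T$ is the action $(B\bullet\T)_{i_1\cdots i_d} = \sum_{j_1,\dots,j_d} B_{i_1 j_1}\cdots B_{i_d j_d}\,\T_{j_1\cdots j_d}$, which satisfies $A\bullet(B\bullet\T) = (AB)\bullet\T$. Second, the zero pattern \eqref{eq:Vpmi} defining $V_{\pmi}$ says exactly that the standard basis $e_1,\dots,e_n$ is an orthogonal basis of eigenvectors of $\T$: the $j$-th eigenvector equation $\T(e_j,\dots,e_j,\cdot) = \lambda e_j$, with $d-1$ copies of $e_j$, reads $\T_{ij\cdots j} = 0$ for all $i\neq j$. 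Hence $\kappa_d(\s)\in V\subseteq V_{\pmi}$ has $\{e_1,\dots,e_n\}$ as an orthogonal eigenbasis, and so does the $d$-th cumulant of any other admissible source vector.

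With this in hand I would argue as follows. Suppose $\x = A\s = A'\s'$ with $A,A'$ invertible and $\s,\s'$ both meeting the hypotheses. Since $\Cov(\s) = \Cov(\s') = I$, we have $AA^\top = \Cov(\x) = A'(A')^\top$, so $Q \coloneqq (A')^{-1}A$ is orthogonal; moreover, after replacing $\x$ by $\Cov(\x)^{-1/2}\x$ and every admissible mixing matrix by $\Cov(\x)^{-1/2}$ times it --- a reparametrization leaving the identifiability claim unchanged --- we may assume $A,A'\in\rO(n)$. From $A\bullet\kappa_d(\s) = A'\bullet\kappa_d(\s')$ we get $\kappa_d(\s') = Q\bullet\kappa_d(\s)$. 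Since the eigenvector equation is $\rO(n)$-equivariant, $\{Qe_1,\dots,Qe_n\}$ is an orthogonal eigenbasis of $Q\bullet\kappa_d(\s) = \kappa_d(\s')$, while $\{e_1,\dots,e_n\}$ is another one because $\kappa_d(\s')\in V_{\pmi}$. If $\kappa_d(\s')$ has a unique orthogonal eigenbasis up to signs and reordering, then $Q$ is a signed permutation matrix, so $A = A'Q$ and the two mixing matrices agree up to permutation and sign-flip of columns; undoing the whitening yields the same for the original matrices.

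The step requiring care, and the main obstacle, is to show that ``$\kappa_d(\s')$ sufficiently general in $V$'' does force the uniqueness of its orthogonal eigenbasis, since the genericity in \cite{ribot2025orthogonal} is a priori phrased for tensors in $V_{\pmi}$ rather than for the possibly smaller linear space $V$. For $d\in\{3,4\}$ this is immediate from \Cref{thm:genericity-Vpmi}, which gives an if-and-only-if criterion valid for every tensor of $V_{\pmi}$, hence of $V$: uniqueness holds exactly when at most one $\kappa_3(s_i)$ vanishes, respectively when the $\kappa_4(s_i)$ are pairwise distinct, and these diagonal-entry conditions are precisely what ``sufficiently general'' means. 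For $d\ge 5$ one takes ``sufficiently general'' to be the non-vanishing of the polynomial cutting out the non-uniqueness locus in $V_{\pmi}$ (again furnished by \Cref{thm:genericity-Vpmi}), which then transfers to $V$ verbatim. One should also check the hypothesis is non-vacuous, i.e.\ that the uniqueness locus meets $V$ in a dense open set: this follows from $V_{\diag}\subseteq V$, the irreducibility of $V$, and the fact that a generic diagonal tensor satisfies the genericity condition, by applying the same criterion to its diagonal entries.
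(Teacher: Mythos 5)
Your proposal is correct and follows essentially the same route as the paper: whiten to reduce to an orthogonal mixing matrix, use multilinearity to transfer the problem to cumulant tensors, invoke the generic uniqueness of an orthogonal eigenbasis for tensors in $V_{\pmi}$ (Theorems~\ref{thm:main-symmetric} and~\ref{thm:genericity-Vpmi}), and use the containment $V_{\diag}\subseteq V$ together with \Cref{lem:general-odeco} to guarantee that the genericity condition is non-vacuous on $V$. You merely spell out the two-factorization identifiability argument that the paper delegates to the proof of \Cref{thm:PMICA}, and for $d\ge 5$ the genericity polynomial is furnished by \Cref{thm:main-symmetric} (and \Cref{remark:genericity-large-d}) rather than \Cref{thm:genericity-Vpmi}, a citation detail that does not affect the argument.
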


We also show that PMI is maximal for identifiability: models are generically identifiable under pairwise mean independence, but generically unidentifiable once any such assumption is dropped. Each mean independence assumption imposes one zero restriction on each cumulant tensor.

\begin{theorem}\label{thm:pmi-maximal}
    Consider the model $\x = A\s$ where $A \in \R^{n \times n}$ is invertible and $s_i$ is mean independent of $s_j$ for all pairs $i \neq j$ except one. Then $A$ is generically unidentifiable from $\kappa_d(\x)$ for any $d \geq 3$ (up to permutation and scaling of columns).
\end{theorem}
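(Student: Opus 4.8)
The plan is to deduce \Cref{thm:pmi-maximal} from a dimension count for the parametrization map of the model: pairwise mean independence sits at exactly the codimension where identifiability is tight, so dropping any single constraint forces the fibers of this map to be positive-dimensional modulo the permissible indeterminacy.

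First I would record the linear space involved. At order $d$ the condition ``$s_i$ is mean independent of $s_j$'' imposes the single linear equation $\T_{ij\dots j}=0$ on $\kappa_d(\s)$ (the entry with index $i$ once and $j$ repeated $d-1$ times), so the space $V\subseteq S^d(\R^n)$ of $d$-th cumulant tensors admissible under the hypothesis of \Cref{thm:pmi-maximal} is $V_\pmi$ with at least one of these $n(n-1)$ equations removed; by the formula for $\dim V_\pmi$ recorded after \eqref{eq:Vpmi}, $\dim V\ge\dim V_\pmi+1=\binom{n+d-1}{d}-n(n-1)+1$. Consider the polynomial map $\Psi(A,\T)=A\bullet\T$ from $\mathrm{GL}_n(\R)\times V$ to $S^d(\R^n)$; its image contains $\kappa_d(\x)$ for every model realization $\x=A\s$, and the set $\Psi^{-1}(\kappa_d(\x))$ contains $(A,\T)$ and is cut out in the domain by the $\binom{n+d-1}{d}$ scalar equations $A'\bullet\T'=\kappa_d(\x)$. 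Since the domain is irreducible of dimension $n^2+\dim V$, every component of this fiber has dimension at least $n^2+\dim V-\binom{n+d-1}{d}\ge n+1$, and in particular so does the component through $(A,\T)$. On the other hand, the indeterminacy the theorem permits—passing from $(A,\T)$ to $(AM,M^{-1}\bullet\T)$ for a column permutation-and-scaling matrix $M$ that keeps $\T$ inside $V$—sweeps out an orbit of dimension at most $n$, since such $M$ form a group whose identity component is the $n$-dimensional diagonal torus and whose permutation part is finite. Because $n+1>n$, the fiber strictly contains this orbit: there is $(A',\T')$ with $A'\bullet\T'=\kappa_d(\x)$ and $\T'\in V$ but with no permutation-and-scaling of columns taking $A$ to $A'$ (invertibility of $A$ forces $A'=AM$ with $M$ non-monomial). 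Hence $A$ is not identifiable from $\kappa_d(\x)$. Running the same count on $V_\pmi$ itself yields exactly $n^2+\dim V_\pmi-\binom{n+d-1}{d}=n$, matching the indeterminacy; this equality $\codim V_\pmi=n^2-n=\dim\mathrm{GL}_n(\R)-\dim(\text{monomial group})$ is the numerical shadow of \Cref{thm:PMICA} and \Cref{thm:general-ca}, and its failure for $V\supsetneq V_\pmi$ is precisely what creates the extra factorizations.

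The step I expect to require the most care is promoting ``the fiber of $\Psi$ is too large'' to the distributional statement, namely producing an honest random vector $\s'$ with $\kappa_d(\s')=\T'$ that obeys the relaxed mean-independence constraints. One option is to read \Cref{thm:pmi-maximal}, in line with \Cref{thm:general-ca}, as a statement about the single tensor $\kappa_d(\x)$, for which the fiber count is already conclusive. The other is a realizability lemma: the $d$-th cumulants of vectors that are mean independent for all pairs but one fill a full-dimensional subset of $V$, so $\T'$ may be chosen among them and still off the monomial orbit. This second route needs a genuine construction rather than a naive linear mixing of two coordinates of an independent vector, which would destroy the surviving ``reverse'' mean-independence condition; one instead superposes a single dependent pair, with prescribed one-sided conditional mean, on an otherwise pairwise-mean-independent vector. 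Finally, since we work over $\R$, one should confirm that the extra fiber directions are realized by real, not merely complex, points—e.g.\ by checking that $\Psi$ is a submersion at $(A,\T)$, so that its fiber is a smooth real manifold of dimension $n+1$ there (a rank condition for the differential that it suffices to verify at one point), or directly from the explicit construction just described. The remainder is bookkeeping around the codimension identity above.
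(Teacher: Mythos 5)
Your route (a dimension count for $\Psi:\mathrm{GL}_n(\R)\times V\to S^d(\R^n)$) is genuinely different from the paper's proof, which works over the orthogonal group: the paper's Lemma~\ref{lem:Vpmi-maximal} reduces to $n=2$, observes that dropping one zero restriction makes $V$ the set of tensors having $e_2$ as an eigenvector, and then uses that a generic tensor has finitely many eigenvectors, at least two of them real and not mutually orthogonal, to produce $Q\in\rO(n)\setminus\rSP(n)$ with $Q\bullet\T\in V$. This difference matters, and it is where I see the main gap. Your fiber count produces an alternative factorization $(A',\T')$ with $A'$ merely invertible and no control on second-order structure, so it only shows that the single tensor $\kappa_d(\x)$, with covariance information discarded, fails to pin down $A$. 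All the paper's positive results (Theorems~\ref{thm:PMICA} and~\ref{thm:general-ca}) are proved after whitening, i.e.\ with second-order information available and the mixing matrix orthogonal; the meaningful negative statement, and the one the paper proves, is that identifiability fails \emph{even in that regime}: the alternative is $A'=AQ^\top$ with $Q$ orthogonal, hence compatible with $\Cov(\x)$ (take $\s'=Q\s$). Your argument cannot be repaired by simply restricting the count to $\rO(n)\times V$: there the expected fiber dimension is $\binom{n}{2}+\dim V-\binom{n+d-1}{d}=1-\binom{n}{2}\le 0$, so the fiber is expected to be finite and a dimension count can never show it exceeds $\rSP(n)$; one needs the finer real-eigenvector argument (an extra real eigenvector of a generic $\T\in V$, not orthogonal to the coordinate ones) that the paper uses.

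Two further points. First, the real-points issue you flag is essential and is left unverified: the Krull-type bound gives complex (Zariski) dimension of the fiber components, and over $\R$ the real locus through $(A,\T)$ could be smaller (cf.\ $x^2+y^2=0$), so without the submersion check your ``$n+1>n$'' comparison does not yet yield a real alternative. The check is in fact doable and you should do it: at a diagonal core $\T_0=\sum_j\lambda_je_j^{\otimes d}$ with distinct nonzero $\lambda_j$, the relevant coordinates of the orbit-direction derivative are $[B\cdot\T_0]_{ij\cdots j}=\lambda_jB_{ij}$, so $\Pi_{V^\perp}$ composed with the differential is surjective there, and by openness the submersion property holds for generic $\T\in V$; that would close this gap for your (weaker, $\mathrm{GL}_n$-level) statement. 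Second, the realizability issue---producing an honest random vector whose distribution satisfies mean independence for all pairs but one and whose $d$-th cumulant is the alternative tensor---is glossed over by the paper as well (its proof, like yours, works at the level of the zero pattern of $\kappa_d$), so I do not count that against you; but the covariance-consistency point above is a real deficit of your approach relative to the paper's, not just a matter of formalization.
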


The rest of this paper is organized as follows. We characterize the cumulants of pairwise mean independent distributions in Section~\ref{sec:kappa_pmi}. We prove \Cref{thm:PMICA} in \Cref{sec:pmica}. 
We prove \Cref{thm:general-ca} and describe what it means for a distribution or cumulant tensor to be sufficiently general in Section~\ref{sec:proof-uniqueness-symmetric}. We prove \Cref{thm:pmi-maximal} and discuss other relaxations of independence under which identifiability holds in Section~\ref{sec:other_relaxations}.
We investigate the consistency and sample complexity of our minimum-distance estimator in \Cref{sec:estimator}. We analyze when ICA methods recover PMI distributions in \Cref{sec:local_optima}.
Our numerical experiments are in \Cref{sec:numerical}.

\section{Cumulants of pairwise mean independent distributions}
\label{sec:kappa_pmi}

This section links pairwise mean independence to the linear space $V_\pmi$ in \eqref{eq:Vpmi}.
Given an $n$-dimensional random vector $\z$, let $M_\z(\bt) = \E(e^{\bt^\top \z})$ and $K_\z(\bt) = \log M_\z(\bt)$ be the moment-generating function and cumulant-generating function of $\z$, respectively. The $d$-th moment tensor is $\mu_d(\z)=\E(\z^{\otimes d})$. Its entries are the $d$-th order partial derivatives of $M_\z(\bt)$ evaluated at $\bt=0$. The $d$-th cumulant tensor
$\kappa_d(\z)$ has entries $\kappa_d(\z)_{i_1, \dots, i_d} = \frac{\partial^d}{\partial t_{i_1} \cdots \partial t_{i_d}} K_\z(\bt) \big\rvert_{\bt = 0}$, whenever the corresponding partial derivative exists. 

The second moment and cumulant are symmetric matrices. The higher-order moments and cumulants are symmetric tensors:
a tensor $\T$ is symmetric if $\T_{i_1, \dots, i_d} = \T_{i_{\sigma(1)}, \dots, i_{\sigma(d)}}$ for any $i_1, \dots, i_d \in [n]$ and any permutation $\sigma \in S_d$. If $d=2$, this recovers the definition of symmetric matrices. Let $S^d(\R^n)$ be the set of real symmetric $n \times \cdots \times n$ tensors of order $d$.

We write $V_\pmi^{d,n}$ instead of $V_\pmi$ to make the tensor format explicit: $V_\pmi^{d,n} \subset S^d(\R^n)$.
For $d=2$, $V_\pmi^{2,n}=V_{\diag}^{2,n}$ is the space of diagonal $n\times n$ matrices.

\begin{theorem}
If $\z = (z_1, \dots, z_n)$ has independent coordinates, then $\kappa_d(\z) \in V_{\diag}^{d,n}$ for all $d\geq 2$ for which the corresponding moments of order $d$ exist. Conversely, if all off-diagonal entries of $\kappa_d(\z)$ vanish for every $d\ge2$ and the cumulant-generating function $K_\z(\bt)$ is finite in a neighborhood of $0$, then the coordinates of $\z$ are independent.
\end{theorem}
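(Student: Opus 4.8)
The plan is to route everything through the cumulant-generating function $K_\z$, using the combinatorial description of cumulants for the forward implication and analyticity for the converse.

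\textbf{Forward direction.} Suppose $\z$ has independent coordinates, and fix a multi-index $(i_1,\dots,i_d)$ that is not constant, so that there are two distinct values $i\ne j$ occurring among $i_1,\dots,i_d$. Then the subcollection of variables whose index equals $i$ is independent of the remaining variables. I would invoke the classical fact that a joint cumulant of a tuple of random variables that splits into two nonempty, mutually independent blocks vanishes; this follows from the moment--cumulant relations (M\"obius inversion over the partition lattice) together with the factorization of joint moments across independent blocks, and it uses only the existence of the order-$d$ moments. Hence $\kappa_d(\z)_{i_1\dots i_d}=0$ unless $i_1=\dots=i_d$, i.e.\ $\kappa_d(\z)\in V_{\diag}^{d,n}$. (When in addition $K_\z$ is finite near $0$, one can argue more directly: independence gives $M_\z(\bt)=\prod_i M_{z_i}(t_i)$, so $K_\z(\bt)=\sum_i K_{z_i}(t_i)$ depends separately on each coordinate and all mixed partial derivatives at $0$ vanish.)

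\textbf{Converse direction.} Assume $K_\z$ is finite on a neighborhood $U$ of $0\in\R^n$ and that all off-diagonal entries of $\kappa_d(\z)$ vanish for every $d\ge 2$. Since the moment-generating function is finite on a neighborhood of the origin, $M_\z$ extends to a holomorphic function on a complex neighborhood of $0\in\C^n$; as $M_\z(0)=1\ne 0$, the function $K_\z=\log M_\z$ is holomorphic near $0$ and therefore equals its convergent multivariate Taylor series on a polydisc $P\subseteq U$. The degree-$d$ part of that Taylor series is $\tfrac1{d!}\sum_{i_1,\dots,i_d}\kappa_d(\z)_{i_1\dots i_d}\,t_{i_1}\cdots t_{i_d}$, so by hypothesis (and since the degree-$1$ part $\sum_i\E(z_i)t_i$ already involves single variables only) only pure powers survive:
\[
K_\z(\bt)=\sum_{i=1}^{n}\ \sum_{d\ge 1}\frac{\kappa_d(z_i)}{d!}\,t_i^{\,d}=\sum_{i=1}^{n}K_{z_i}(t_i),\qquad \bt\in P,
\]
using $\kappa_d(\z)_{i\dots i}=\kappa_d(z_i)$ and recognizing the inner sum as $K_{z_i}(t_i)$ by restricting $K_\z$ to the $i$-th coordinate axis. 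Exponentiating yields $M_\z(\bt)=\prod_{i=1}^{n}M_{z_i}(t_i)$ for all real $\bt\in P$. The right-hand side is the moment-generating function of a random vector with independent coordinates and the same marginals as $\z$; since a moment-generating function finite on a neighborhood of the origin determines the law uniquely, $\z$ has that law, i.e.\ its coordinates are independent.

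\textbf{Main obstacle.} The forward direction is routine. In the converse the delicate step is upgrading the coordinatewise vanishing of off-diagonal cumulants---an order-by-order, formal condition---to the genuine functional identity $K_\z=\sum_i K_{z_i}$ on an open set; this is exactly where finiteness of $K_\z$ near $0$ is used, via analyticity and convergence of the Taylor expansion, and it is also what licenses the concluding appeal to determinacy of the moment problem to pass from equality of generating functions to equality of distributions.
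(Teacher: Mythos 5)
Your proposal is correct and follows essentially the same route the paper takes (the paper only sketches it in the remark after the theorem): the forward direction via the classical vanishing of mixed cumulants across independent blocks, and the converse by using finiteness of $K_\z$ near $0$ to get analyticity, concluding $K_\z(\bt)=\sum_i K_{z_i}(t_i)$ from the vanishing off-diagonal cumulants, exponentiating to factor $M_\z$, and invoking determinacy of the law by a moment-generating function finite near the origin. You merely spell out the analyticity and uniqueness details that the paper leaves implicit.
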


\begin{remark}
The forward implication is classical (vanishing mixed cumulants under independence). For the converse, finiteness of $K_\z$ near $0$ implies that $K_\z(\bt)=\sum_{i=1}^n K_{z_i}(t_i)$, hence $M_\z(\bt)=\prod_{i=1}^n M_{z_i}(t_i)$ in a neighborhood of $0$, which characterizes independence.
\end{remark}

The next result shows that $V_\pmi$, defined in \eqref{eq:Vpmi}, plays for pairwise mean independence the role that $V_{\diag}$ plays for independence.
One direction was given in \cite[Proposition 3.3]{garrote2024cumulant}. We give a detailed explanation and also show the converse direction.

\begin{theorem}\label{thm:V_pmi}
If $\z = (z_1, \dots, z_n)$ is pairwise mean independent then $\kappa_d(\z) \in V_\pmi^{d,n}$ for all $d\geq 2$ for which the corresponding moments of order $d$ exist. Conversely, if $\kappa_d(\z)\in V_\pmi^{d,n}$ for all $d\ge2$ and $K_\z(\bt)$ is finite in a neighborhood of $0$, then $\z$ is pairwise mean independent.
\end{theorem}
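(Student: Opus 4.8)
The plan is to prove the two directions separately, reducing pairwise mean independence to a condition on moment/cumulant generating functions.

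\emph{Forward direction.} Suppose $\z$ is pairwise mean independent, so $\E(z_i \mid z_j) = \E(z_i)$ for all $i \neq j$. First I would reduce to the centered case: since $\kappa_d$ for $d \ge 2$ is invariant under shifting $\z$ by a constant, and pairwise mean independence with $\E(z_i \mid z_j) = \E(z_i)$ is likewise preserved, we may assume $\E(z_i) = 0$, so that $\E(z_i \mid z_j) = 0$. The key observation is that this implies $\E(z_i \, g(z_j)) = \E\big(\E(z_i \mid z_j)\, g(z_j)\big) = 0$ for any measurable $g$ for which the expectation exists; in particular $\E(z_i z_j^{k}) = 0$ for all $k \ge 1$. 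Now consider a mixed moment $\mu_d(\z)_{ij\cdots j} = \E(z_i z_j^{d-1})$ with $i \neq j$: this vanishes. To pass from moments to cumulants, I would use the standard moment–cumulant relation expressing $\kappa_d(\z)_{ij\cdots j}$ as a polynomial in the moments indexed by set partitions of $\{1,\dots,d\}$; every block of every partition involves only the indices $i$ and $j$, so each moment factor is of the form $\E(z_i^a z_j^b)$. If $a \ge 1$ and $b \ge 1$ this factor is $0$ by the above; if $a = 0$ or $b = 0$ the partition has some block equal to a subset of $\{2,\dots,d\}$ containing only $j$'s, but then (since $i$ must appear in some block) the block containing $i$ is a singleton $\{i\}$, contributing $\E(z_i) = 0$. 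Hence every term vanishes, so $\kappa_d(\z)_{ij\cdots j} = 0$, i.e. $\kappa_d(\z) \in V_\pmi^{d,n}$. (Alternatively, one can read off the forward implication from \cite[Proposition 3.3]{garrote2024cumulant} as noted.)

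\emph{Converse direction.} Assume $\kappa_d(\z) \in V_\pmi^{d,n}$ for all $d \ge 2$ and that $K_\z(\bt)$ is finite on a neighborhood $U$ of $0$. Then $K_\z$ is real-analytic on $U$ and its Taylor coefficients at $0$ are exactly the entries of the $\kappa_d(\z)$. The hypothesis $\kappa_d(\z)_{ij\cdots j} = 0$ for all $i \neq j$ and all $d$ says precisely that, for each pair $i \neq j$, the partial derivatives $\partial_{t_i} \partial_{t_j}^{k} K_\z$ vanish at $0$ for every $k \ge 0$ (the $k=0$ case being $\partial_{t_i} K_\z(0) = \E(z_i)$, which we again normalize to $0$; and uncorrelatedness gives the mixed second derivative). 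I would then restrict to the two-variable slice $\bt = (0,\dots,0,t_i,0,\dots,0,t_j,0,\dots,0)$ and set $\phi(t_i,t_j) := K_\z$ on this slice, an analytic function near $0 \in \R^2$. The vanishing of all $\partial_{t_i}\partial_{t_j}^{k}\phi$ at the origin forces $\partial_{t_i}\phi$ to be independent of $t_j$ near $0$, hence $\phi(t_i,t_j) = a(t_i) + b(t_j)$ for analytic $a,b$; equivalently $K_\z(t_i e_i + t_j e_j) = K_{z_i}(t_i) + K_{z_j}(t_j)$ near $0$. Exponentiating, the joint m.g.f.\ factorizes on this slice: $M_{(z_i,z_j)}(t_i,t_j) = M_{z_i}(t_i) M_{z_j}(t_j)$ in a neighborhood of $0$. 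Since the joint m.g.f.\ being finite near $0$ determines the joint law, this shows $z_i$ and $z_j$ are independent — in particular mean independent. As this holds for every pair $i \neq j$, $\z$ is pairwise mean independent.

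\emph{Main obstacle.} The analytic bookkeeping in the forward direction (verifying that every partition term in the moment–cumulant formula vanishes) is the most delicate part, though it is routine once the observation $\E(z_i z_j^{k}) = 0$ is in hand; the cleanest write-up may instead invoke the multilinearity of cumulants and the fact that a cumulant with a "singleton-free" partition structure still forces pairing of the $i$-index with some $j$-index. In the converse, the only subtlety is justifying the step from "all mixed derivatives vanish" to "the generating function splits additively," which is immediate for analytic functions but should be stated carefully; and one must remember to normalize means and to use uncorrelatedness for the $d=2$ off-diagonal entry, which is already built into $V_\pmi^{2,n}$ being the diagonal matrices.
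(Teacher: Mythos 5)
Your forward direction is correct and, thanks to the centering reduction, slightly more elementary than the paper's argument: after translating so that $\E(z_i)=0$, mean independence gives $\E(z_i z_j^{b})=0$ for all $b\ge 0$, and in the moment--cumulant sum over partitions of $\{1,\dots,d\}$ the block containing the position carrying the index $i$ always contributes such a factor, so every term vanishes. (The paper keeps the means and instead kills the sum via a M\"obius-inversion argument on the partition lattice; both are valid.)

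The converse direction, however, has a genuine gap, and in fact argues toward a false statement. From $\kappa_{k+1}(\z)_{i,j,\dots,j}=0$ for all $k$ you only know $\partial_{t_i}\partial_{t_j}^{k}\phi(0,0)=0$ for all $k\ge 0$, where $\phi$ is the joint cumulant generating function of $(z_i,z_j)$. By analyticity this yields $\partial_{t_i}\phi(0,t_j)=0$ for $t_j$ near $0$, i.e.\ the vanishing of $\partial_{t_i}\phi$ along the line $\{t_i=0\}$ --- not that $\partial_{t_i}\phi$ is independent of $t_j$ on a two-dimensional neighborhood. The additive splitting $\phi(t_i,t_j)=a(t_i)+b(t_j)$ would require $\partial_{t_i}\partial_{t_j}\phi\equiv 0$ near the origin, equivalently the vanishing of \emph{all} mixed cumulants in which both $i$ and $j$ appear (e.g.\ $\kappa_4(\z)_{iijj}$), which membership in $V_{\pmi}$ does not impose. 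Indeed the conclusion you draw --- that $z_i$ and $z_j$ are independent --- cannot hold under the hypotheses: the uniform distribution on the unit circle in $\R^2$ is spherical, hence pairwise mean independent with all cumulants in $V_{\pmi}$ and an everywhere-finite moment generating function, yet $z_1,z_2$ are not independent since $z_1^2+z_2^2=1$. The correct route, as in the paper, aims lower: from the cumulant zeros deduce (inductively, via the moment--cumulant relation) that $\E(z_i z_j^{k})=\E(z_i)\E(z_j^{k})$ for all $k$, i.e.\ after centering $\E\bigl(\E(z_i\mid z_j)\,z_j^{k}\bigr)=0$ for all $k\ge 0$; then use finiteness of $K_\z$ near $0$ to obtain density of polynomials in the space of square-integrable functions of $z_j$, so that $\E(z_i\mid z_j)=0$ almost surely. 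This yields exactly pairwise mean independence and no more.
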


\begin{proof}
Fix $d\ge2$ such that all moments of order $d$ exist. Then all moments and cumulants up to order $d$ exist. The $(i,j,\ldots,j)$ entry of the $d$-th order moments tensor $\mu_d(\z)$ equals $\E(z_i z_j^{\,d-1})$. By the tower property,
\begin{equation}\label{eq:wedgepi0}
    \E(z_i z_j^{\,d-1})\;=\;\E \big(z_j^{\,d-1}\,\E(z_i \mid z_j)\big)\;=\;\E(z_i)\E(z_j^{\,d-1})\qquad(i\ne j).
\end{equation}
Let $\Pi_d$ be the partition lattice of $\{1,\dots,d\}$ and, for $\pi\in \Pi_d$, denote by $|\pi|$ the number of blocks of $\pi$. The cumulant-moment relation is
\begin{equation}\label{eq:KinM}
    \kappa_d(\z)_{i_1,\ldots, i_d}\;=\;\sum_{\pi\in \Pi_d}(-1)^{|\pi|-1}(|\pi|-1)!\ \prod_{B\in \pi} \E  \Big(\prod_{k\in B}z_{i_k}\Big),
\end{equation}
see, for example, \cite[§4.2.1]{zwiernik2015semialgebraic}.
Specialize to $(i_1,\dots,i_d)=(i,j,\dots,j)$ with $i\ne j$ and let $\pi_0:=1\,|\,2\cdots d$ be a partition with two blocks $\{1\}$ and $\{2,\ldots,d\}$. For any $\pi\in\Pi_d$, write $\pi\wedge\pi_0$ for the partition obtained by splitting the block of $\pi$ that contains $1$ into $\{1\}$ and the rest. By \eqref{eq:wedgepi0},
\[
\prod_{B\in \pi}\E \Big(\prod_{k\in B} z_{i_k}\Big)\;=\;\prod_{B\in \pi\wedge \pi_0}\E \Big(\prod_{k\in B} z_{i_k}\Big).
\]
Hence
\begin{align*}
\kappa_d(\z)_{i,j,\ldots,j}
&=\sum_{\pi\in \Pi_d}(-1)^{|\pi|-1}(|\pi|-1)!\prod_{B\in \pi\wedge \pi_0}\E \Big(\prod_{k\in B} z_{i_k}\Big)\\
&=\sum_{\delta\le \pi_0}\Bigg(\sum_{\pi:\ \pi\wedge\pi_0=\delta}(-1)^{|\pi|-1}(|\pi|-1)!\Bigg)\prod_{B\in \delta}\E \Big(\prod_{k\in B} z_{i_k}\Big),
\end{align*}
and the inner sum vanishes by M\"obius inversion on $\Pi_d$ \cite[Lemma 4.19]{zwiernik2015semialgebraic}, so~$\kappa_d(\z)\in V_\pmi^{d,n}$.

For the converse, we use a standard $L^2$ projection argument. Here $L^2$ denotes the set of all square integrable functions with respect to the fixed probability space and with the standard inner product. Cumulants are invariant under mean shifts, so assume $\E z_i=0$. Define the Hilbert space of square-integrable functions of $W$ by
\[
\mathcal H(W)\ :=\ \{\,h(W):\ h \text{ measurable and } \E[h(W)^2]<\infty\,\}.
\]
Here, $\mathcal H(W)\subseteq L^2$ and the conditional expectation $\E(\cdot|W)$ gives the orthogonal projection from $L^2$ to $\mathcal H(W)$; see, e.g., Exercise~34.12 in \cite{billingsley1995probability}. Let $g(z_j):=\E(z_i\mid z_j)\in\mathcal H(z_j)$. The vanishing of all mixed entries $\kappa_{k+1}(\z)_{i,j,\ldots,j}=0$ for $k\ge1$ implies (by \eqref{eq:KinM} as above) that
\[
\E\big(g(z_j)\,z_j^{\,k}\big)=0\qquad\text{for all }k\ge0.
\]
Since $K_\z$ is finite near $0$, $\{z_j^k:k\ge0\}$ is dense in $\mathcal H(z_j)$ (polynomial density; see \cite[Cor.~2.3.3]{akhiezer2020classical}). Hence $g(z_j)\in \mathcal H(z_j)$ is orthogonal to a dense subset of $\mathcal H(z_j)$, so $g(z_j)=0$ in $L^2$, i.e., $\E(z_i\mid z_j)=0$ almost surely. This holds for all $i\ne j$, proving pairwise mean independence.
\end{proof}

\begin{corollary}\label{cor:momentssuff}
    Theorem~\ref{thm:V_pmi} remains true with $\mu_d$ in place of $\kappa_d$ provided $\E(\z)=0$.
\end{corollary}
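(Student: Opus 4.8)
The corollary asserts two things under the extra hypothesis $\E(\z)=0$: (forward) if $\z$ is pairwise mean independent then $\mu_d(\z)\in V_\pmi^{d,n}$ for every $d\ge 2$ with finite $d$-th moments; and (converse) if $\mu_d(\z)\in V_\pmi^{d,n}$ for all $d\ge 2$ and $K_\z(\bt)$ is finite near $0$, then $\z$ is pairwise mean independent. The plan is to deduce both halves from the proof of \Cref{thm:V_pmi} by the simple observation that, once $\E(\z)=0$, the only entries of $\mu_d(\z)$ that the defining condition of $V_\pmi$ in \eqref{eq:Vpmi} constrains are $\mu_d(\z)_{i,j,\dots,j}=\E(z_i z_j^{\,d-1})$ for $i\ne j$ (one index of multiplicity $1$, another of multiplicity $d-1$; all other index multisets with a single repeated-to-$d-1$ pattern are symmetric copies of these).

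For the forward direction, fix $d\ge2$ with finite $d$-th moments. Equation~\eqref{eq:wedgepi0} from the proof of \Cref{thm:V_pmi}, which needs only the existence of the $d$-th moments, already gives $\E(z_i z_j^{\,d-1})=\E(z_i)\E(z_j^{\,d-1})$ for $i\ne j$; under $\E(z_i)=0$ this is $0$, so $\mu_d(\z)\in V_\pmi^{d,n}$. No integrability or genericity beyond the existence of the relevant moments is used, exactly as in \Cref{thm:V_pmi}.

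For the converse, read off the constrained entries: the hypothesis $\mu_{k+1}(\z)\in V_\pmi^{k+1,n}$ for all $k\ge 1$ says precisely $\E(z_i z_j^{\,k})=0$ for all $i\ne j$ and all $k\ge1$, and together with $\E(z_i)=0$ this extends to all $k\ge0$. Now I reuse the $L^2$-projection argument from the proof of \Cref{thm:V_pmi} verbatim: set $g(z_j):=\E(z_i\mid z_j)\in\mathcal H(z_j)$; by the tower property $\E\big(g(z_j)\,z_j^{\,k}\big)=\E(z_i z_j^{\,k})=0$ for all $k\ge0$; since $K_\z$ is finite near $0$, the powers $\{z_j^{\,k}:k\ge0\}$ are dense in $\mathcal H(z_j)$, so $g(z_j)$ is orthogonal to a dense subspace and hence $g(z_j)=0$ in $L^2$, i.e., $\E(z_i\mid z_j)=0=\E(z_i)$. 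This holds for all $i\ne j$, giving pairwise mean independence.

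I do not expect a genuine obstacle here: the whole content is that, under $\E(\z)=0$, the off-diagonal-slice entries of $\mu_d$ and $\kappa_d$ vanish together, so both directions of \Cref{thm:V_pmi} transfer directly. The only points needing a little care are (i) bookkeeping which symmetric-tensor entries $V_\pmi$ actually constrains, so that ``$\mu_d\in V_\pmi$ for all $d$'' is correctly converted into the family of scalar equations $\E(z_i z_j^{\,k})=0$, and (ii) invoking $\E(\z)=0$ to supply the $k=0$ case that the cumulant formulation got for free. If a more structural phrasing is preferred, one can instead note that, when $\E(\z)=0$, the moment--cumulant relation \eqref{eq:KinM} and its inverse express $\mu_d(\z)_{i,j,\dots,j}$ as a polynomial in which every monomial carries a factor from $\{\kappa_m(\z)_{i,j,\dots,j}:2\le m\le d\}\cup\{\E(z_i)\}$, and symmetrically for $\kappa_d(\z)_{i,j,\dots,j}$ in terms of moments; hence the two zero patterns coincide for all $d$, and the corollary follows from \Cref{thm:V_pmi}.
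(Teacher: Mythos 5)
Your proof is correct and is essentially the argument the paper intends (the corollary is stated without a separate proof precisely because it follows by this direct adaptation of the proof of Theorem~\ref{thm:V_pmi}): equation~\eqref{eq:wedgepi0} plus $\E(\z)=0$ gives the forward direction, and for the converse the hypothesis $\mu_{k+1}(\z)\in V_\pmi$ yields $\E(z_i z_j^{\,k})=0$ directly, after which the same $L^2$ density argument applies. Your bookkeeping of which entries $V_\pmi$ constrains and the remark that $\E(\z)=0$ supplies the $k=0$ case are exactly the points that make the transfer work.
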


\section{Pairwise Mean Independent Component Analysis}
\label{sec:pmica}

We prove Theorem~\ref{thm:PMICA}; i.e., we show that we can identify $A$ in the model $\x=A\s$ if~$\s$ is pairwise mean independent and sufficiently general. 
This gives the identifiability of Pairwise Mean Independent Component Analysis (PMICA). Our approach is to relate the identifiability of PMICA to the study of orthogonal eigenvectors of tensors~\cite{ribot2025orthogonal}.

We first explain how to reduce from an invertible matrix to an orthogonal matrix in our component analysis model, a standard procedure that sometimes goes by the name of whitening. 
Given a random vector $\y$, we define its transformation $\y_w$ to be $Q \Lambda^{-1/2} Q^\top (\y - \E (\y))$ where $Q \Lambda Q^\top$ is the eigendecomposition of the covariance matrix $\Cov(\y) = \E ((\y - \E (\y)) (\y - \E (\y))^\top)$. Then $\E (\y_w) = 0$ and $\E (\y_w \y_w^\top) = I$. This shifts and rescales the variables so that they have mean zero and variance one. We can assume, without loss of generality, that $\Cov(\s) = I$, as follows. If $\s$ is PMI, then it has uncorrelated entries, and scaling $\s$ does not change membership in the PMICA model because $V_{\pmi}$ is given by zero restrictions and the scalars can be absorbed by the mixing matrix $A$. Applying the procedure explained above to random variables $\x$ in a component analysis model $\x = A \s$ turns the equation into $\x_w = \tilde{A} \s$, where $\tilde{A}$ is now an orthogonal matrix, since $I = \Cov(\x_w) = \tilde{A} \Cov(\s) \tilde{A}^\top = \tilde{A} \tilde{A}^\top$.

The orthogonal group is $\rO(n) = \{ A \in \R^{n \times n} \mid A^\top A = I\}$.  
It acts on $S^d(\R^n)$ as follows: given $A \in \rO(n)$ and $\T \in S^d(\R^n)$, define $A \bullet \T \in S^d(\R^n)$ by
\[
[A \bullet \T]_{i_1, \dots, i_d} \coloneqq \sum_{j_1, \dots, j_d = 1}^n A_{i_1j_1} \cdots A_{i_dj_d} \T_{j_1\dots j_d}.
\]
Given a linear space $V \subseteq S^d(\R^n)$, its orbit is $\rO(n) \bullet V = \{ A \bullet \T \mid A \in \rO(n), \T \in V\}$.
The following result is well-known.
\begin{lemma}\label{lem:multilinearity}
Moment and cumulant tensors are multilinear: For every $d\geq 2$ and every $A\in \R^{n\times n}$ it holds that $\mu_d(A \x) = A \bullet \mu_d(\x)$, $\kappa_d(A \x) = A \bullet \kappa_d(\x)$.    
\end{lemma}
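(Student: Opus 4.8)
The plan is to treat the two claims separately, starting with the (essentially trivial) moment case and then deducing the cumulant case either directly from the cumulant generating function or from the cumulant--moment formula~\eqref{eq:KinM}.

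For moments, I would just unwind the definitions. Writing $\y = A\x$, so that $y_i = \sum_j A_{ij} x_j$, linearity of expectation gives
\[
\mu_d(A\x)_{i_1,\ldots,i_d} = \E(y_{i_1}\cdots y_{i_d}) = \sum_{j_1,\ldots,j_d=1}^n A_{i_1 j_1}\cdots A_{i_d j_d}\,\E(x_{j_1}\cdots x_{j_d}) = [A\bullet \mu_d(\x)]_{i_1,\ldots,i_d},
\]
valid whenever the order-$d$ moments of $\x$ exist; the same computation at every order $k \le d$ shows $\mu_k(A\x) = A\bullet\mu_k(\x)$, which the cumulant step will use.

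For cumulants, when the moment generating function $M_\x$ is finite in a neighborhood of $0$, the quickest route is the identity
\[
K_{A\x}(\bt) = \log\E\!\left(e^{\bt^\top A\x}\right) = \log\E\!\left(e^{(A^\top\bt)^\top \x}\right) = K_{\x}(A^\top\bt).
\]
Differentiating by the chain rule, using $\partial (A^\top\bt)_j/\partial t_i = A_{ij}$, gives
\[
\frac{\partial^d K_{A\x}}{\partial t_{i_1}\cdots\partial t_{i_d}}(\bt) = \sum_{j_1,\ldots,j_d=1}^n A_{i_1 j_1}\cdots A_{i_d j_d}\,\frac{\partial^d K_{\x}}{\partial t_{j_1}\cdots\partial t_{j_d}}(A^\top\bt),
\]
and evaluating at $\bt = 0$ yields $\kappa_d(A\x) = A\bullet\kappa_d(\x)$. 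To cover the case where only the first $d$ moments exist, I would instead insert the moment transformation rule into the cumulant--moment formula~\eqref{eq:KinM}: for $\y = A\x$ the block product $\prod_{B\in\pi}\E\big(\prod_{k\in B} y_{i_k}\big)$ reassembles, over a single index map $j\colon[d]\to[n]$, as $\sum_{j} A_{i_1 j(1)}\cdots A_{i_d j(d)}\prod_{B\in\pi}\E\big(\prod_{k\in B} x_{j(k)}\big)$; pulling the sum over $j$ outside the sum over partitions $\pi\in\Pi_d$ then recovers $[A\bullet\kappa_d(\x)]_{i_1,\ldots,i_d}$ term by term.

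The result presents no real obstacle: it is a bookkeeping exercise in linearity of expectation and the chain rule. The only point deserving care is the hypothesis under which $\kappa_d$ is defined at all---the CGF argument presumes $M_\x$ finite near $0$, while the cumulant--moment route needs only finiteness of the order-$d$ moments---so I would read the lemma, as the paper does implicitly elsewhere, under the standing assumption that the relevant moments exist.
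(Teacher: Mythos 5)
Your proof is correct. The paper states this lemma without proof, citing it as well-known, so there is no paper argument to compare against; both of your routes (the CGF identity $K_{A\x}(\bt)=K_\x(A^\top\bt)$ plus the chain rule, and the expansion of each block moment inside the cumulant--moment formula~\eqref{eq:KinM}) are standard and valid, and your remark that the combinatorial route only requires existence of the order-$d$ moments, rather than finiteness of $M_\x$ near $0$, is exactly the level of generality in which the paper uses the lemma.
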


Let $\x = A \s$, where $\s$ are (fully) independent with mean zero and unit variance, and let $A \in \rO(n)$.  
The $d$-th cumulant of $\x$ has the form 
\begin{equation}
    \label{eq:decomp}
    \kappa_d(\x) = A \bullet \kappa_d(\s) = 
 \sum_{j=1}^n \kappa_d(s_j) \ba_j^{\otimes d},
\end{equation}  where $\ba_j$ is the $j$-th column of $A \in \rO(n)$ and $\kappa_d(s_j) = \kappa_d(\s)_{j, \dots, j}$ is the $d$-th cumulant of source variable~$s_j$. This writes the cumulant as a sum of outer products of orthogonal vectors, so~$\kappa_d(\x)$ is an orthogonally decomposable (odeco) tensor. 
 
 The cumulant expression~\eqref{eq:decomp} relates usual ICA to orthogonal tensor decomposition. 
 Odeco decompositions are unique \cite[Theorem 4.1]{anandkumar2014tensor}, so the matrix is identifiable (up to sign and permutation) from $\kappa_d(\x)$ if and only if at most one $\kappa(s_j)$ is zero. The classical identifiability result for ICA \cite{comon1994independent} says that identifiability holds if and only if at most one source is Gaussian. This relates to the tensor decomposition as follows: the Gaussian distribution has zero $d$-th cumulants for all $d \geq 3$ and it is the only probability distribution with the property that there exists $d_0$ such that the $d$-th cumulant vanishes for all $d \geq d_0$ \cite{marcinkiewicz1939propriete}.
Hence, for non-Gaussian sources, we can carry out tensor decomposition of higher-order cumulants to recover $A$, see~\cite[Section 2.1]{carreno2024linear}.

We have seen that orthogonal decompositions of symmetric tensors play a role in independent component analysis. In ICA, the decompositions have core tensors in $V_{\diag}$. In this section, we study decompositions with core tensors in $V_{\pmi}$ as defined in \eqref{eq:Vpmi}. Given $\T \in S^d(\R^n)$, a unit norm vector $v \in \R^n$ is an \emph{eigenvector} of $\T$ with \emph{eigenvalue} $\lambda \in \R$ if
$\T(\cdot, v, \dots, v) = \lambda v$, viewing $\T$ as a multilinear map. We build upon the following result.

\begin{theorem}[{\cite[Theorem 1.1]{ribot2025orthogonal}}]\label{thm:main-symmetric}
    If $\T\in S^d(\R^n)$ is a generic symmetric tensor with an orthogonal basis of eigenvectors, then this basis is unique (up to sign flip).
\end{theorem}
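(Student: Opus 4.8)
The plan is to translate the statement into a fact about fibers of an orbit map and then combine a tangent-space (dimension) computation with a count over one special tensor. First, identify the tensors with an orthogonal basis of eigenvectors: if $Q\in\rO(n)$ has columns $q_1,\dots,q_n$ and $\T=Q\bullet\T_0$ with $\T_0\in V_\pmi$, then $\T(\cdot,q_j,\dots,q_j)$ has $i$-th coordinate $(\T_0)_{ij\cdots j}=\lambda_j\delta_{ij}$ where $\lambda_j:=(\T_0)_{j\cdots j}$, so each $q_j$ is an eigenvector of $\T$; conversely, expressing $\T$ in any orthonormal eigenbasis conjugates it into $V_\pmi$. Hence the tensors in the statement are exactly the orbit $\mathcal{U}:=\rO(n)\bullet V_\pmi$, and, writing
\[
\Phi\colon \rO(n)\times V_\pmi \to S^d(\R^n), \qquad \Phi(Q,\T_0)=Q\bullet\T_0,
\]
the theorem says that the generic fiber of $\Phi$ consists of exactly $2^n n!$ points. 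That it is at least $2^n n!$ is automatic: the group $\Sigma\subset\rO(n)$ of signed permutation matrices preserves $V_\pmi$ (its defining zero pattern is permutation- and sign-flip invariant), so $(Q,\T_0)\mapsto(Q\sigma,\sigma^{-1}\bullet\T_0)$ acts on each fiber, freely over a generic point. The work is the reverse inequality.

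Second, I would prove generic finiteness by linearizing $\Phi$. Its differential at $(I,\T_0)$ is $(X,\dot\T_0)\mapsto X\cdot\T_0+\dot\T_0$, with $X\in\mathfrak{so}(n)$ acting on $\T_0$ multilinearly, so injectivity of $d\Phi$ there is equivalent to triviality of the linearized stabilizer $\{X\in\mathfrak{so}(n):X\cdot\T_0\in V_\pmi\}$. Computing the $(i,j,\dots,j)$-entry of $X\cdot\T_0$ for $i\neq j$, using $(\T_0)_{m,j,\dots,j}=\lambda_j\delta_{mj}$ and $X_{ji}=-X_{ij}$, yields a linear system in the entries $X_{ij}$ whose coefficients are polynomials in $\T_0$; specializing to the diagonal tensor $\T_*=\sum_j\lambda_j e_j^{\otimes d}$ with the $\lambda_j$ distinct and nonzero kills all the off-diagonal coefficients and leaves $\lambda_j X_{ij}=0$, forcing $X=0$. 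Thus the linearized stabilizer vanishes at $\T_*$, hence on a dense open subset of $V_\pmi$; it follows that $\dim\mathcal{U}=\binom{n}{2}+\dim V_\pmi$, that $\Phi$ is generically finite, and that $\Phi$ is unramified at each of the $2^n n!$ points of $\Phi^{-1}(\T_*)$, which are the pairs $(\sigma,\sigma^{-1}\bullet\T_*)$ with $\sigma\in\Sigma$.

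Third, I would pin the degree of $\Phi$ to $2^n n!$ by analyzing the fiber over $\T_*$ directly. An eigenvector $v$ of $\T_*=\sum_j\lambda_j e_j^{\otimes d}$ satisfies $\lambda_i v_i^{\,d-1}=\mu v_i$ for all $i$, so on its support $S=\{i:v_i\neq 0\}$ one has $\lambda_i v_i^{\,d-2}=\mu$; in particular $\T_*$ has finitely many eigenvectors, organized by support. Two eigenvectors with supports $S,T$ are automatically orthogonal when $S\cap T=\emptyset$, and when $S\cap T\neq\emptyset$ the inner product is a nonzero algebraic function of $\lambda$ for generic $\lambda$; hence an orthonormal family of $n$ eigenvectors of $\T_*$ must have pairwise disjoint supports, so all supports are singletons and the family is a signed permutation of $e_1,\dots,e_n$. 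Therefore $\Phi^{-1}(\T_*)$ has exactly $2^n n!$ points, all unramified, so $\deg\Phi=2^n n!$ and the generic fiber of $\Phi$ has $2^n n!$ points — which is the theorem: the orthogonal eigenbasis of a generic $\T\in\mathcal{U}$ is unique up to reordering and sign flips, i.e.\ unique up to sign flips as an unordered set.

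I expect the last step to be the main obstacle. The lower bound on the fiber is free, but the upper bound needs (i) the combinatorial support argument above together with a genuine non-vanishing check that no accidental orthogonality among eigenvectors of $\T_*$ occurs for generic $\lambda$, and (ii) enough algebro-geometric bookkeeping — flatness of $\Phi$ near $\T_*$, or a monodromy argument on the generically finite cover $\Phi$ — to transport the count from the special tensor $\T_*$ to the generic fiber. If that route proves awkward, an alternative is to degenerate $V_\pmi$ onto $V_\diag$ and read off the degree from the known uniqueness of orthogonal (odeco) tensor decompositions used in \eqref{eq:decomp}, or to compute it by elimination theory. The passage between $\R$ and $\C$ is harmless: the $2^n n!$ eigenbases produced by $\Sigma$ are already real, so a complex degree of $2^n n!$ forces the real fiber to coincide with the complex one.
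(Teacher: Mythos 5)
You are proving a statement that this paper does not prove at all: \Cref{thm:main-symmetric} is imported verbatim from \cite[Theorem 1.1]{ribot2025orthogonal}, so there is no in-paper argument to match; I can only judge your outline on its own merits and against the related in-house material (\Cref{lem:general-odeco}, \Cref{thm:genericity-Vpmi}). Much of your setup is sound: the identification of the tensors in question with the orbit $\rO(n)\bullet V_\pmi$ is exactly the paper's Propositions 4.2/6.1 citation; the reformulation as a fiber-count for $\Phi(Q,\T_0)=Q\bullet\T_0$, the free action of the signed permutations giving the lower bound $2^n n!$, the linearized-stabilizer computation $\lambda_j X_{ij}=0$ at a diagonal tensor (which yields generic finiteness and the dimension formula of the orbit), and the determination of the fiber over a generic odeco tensor $\T_*$ (essentially \Cref{lem:general-odeco}, via Robeva's description of eigenvectors) are all correct.

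The genuine gap is the last step, which you yourself flag: transporting the exact count from the special fiber over $\T_*$ to a \emph{generic} fiber, i.e.\ obtaining the upper bound $\le 2^n n!$. Over $\R$ there is no degree to pin down: for a real polynomial map, fiber cardinality is only locally constant on the regular locus and genuinely differs between components (think of $x\mapsto x^3-3x$), so your properness-plus-unramifiedness argument only gives equality for $\T_0$ in a Euclidean neighborhood of $\T_*$, not for generic $\T_0\in V_\pmi$; and since $V_{\diag}$ is a thin subspace of $V_\pmi$, the locus of bad tensors could a priori contain all diagonal tensors' neighbors' complement—knowing the count over generic odeco tensors does not certify genericity in $V_\pmi$. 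Over $\C$, where Zariski arguments would close this, $\Phi$ is \emph{not} proper ($\rO(n,\C)$ is non-compact and the norm-preservation that makes the real map proper fails), so the $2^n n!$ unramified points of the special fiber only give $\deg_\C\Phi\ \ge\ 2^n n!$—the trivial direction—while uniqueness needs $\le$: components of the incidence variety $\{(Q,\T_0):Q\bullet\T_0\in V_\pmi\}$ other than $\rSP(n)\times V_\pmi$ could dominate $V_\pmi$ while their fibers over $\T_*$ escape to infinity or become empty, and neither monodromy nor the proposed degeneration onto $V_{\diag}$ rules this out; flatness near $\T_*$ is precisely what would have to be proved. Your closing real-versus-complex remark is fine, but it presupposes the missing complex upper bound. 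By contrast, the cited proof (whose method is reflected in \Cref{thm:genericity-Vpmi} and \Cref{remark:genericity-large-d}) bounds the non-uniqueness locus directly: it reduces a non-signed-permutation rotation to $2\times2$ blocks via the normal form $Q=P^\top RP$ and computes the kernel of the explicit matrices $M_Q$, exhibiting the tensors with a second orthogonal eigenbasis as a proper subvariety of $V_\pmi$—a hands-on elimination argument rather than a degree count, and it is exactly the ingredient your proposal still lacks.
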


This result is key to our discussion for the following reason. Let $\rSP(n) \subset \rO(n)$ denote the set of $n \times n$ signed permutation matrices, i.e., matrices of the form $DP$ where $D$ is diagonal with diagonal entries $\pm1$ and $P$ is a permutation matrix. The set of symmetric tensors in $S^d(\R^n)$ with an orthogonal basis of eigenvectors is the orbit $\rO(n) \bullet V_{\pmi}$. This fact, together with \Cref{thm:main-symmetric}, implies that if $\T \in V_{\pmi} \subset S^d(\R^n)$ is generic, then $Q \bullet \T \in V_{\pmi}$ if and only if $Q \in \rSP(n)$ (see Propositions 4.2 and 6.1 in \cite{ribot2025orthogonal} for details).

The following result implies that general independent distributions are sufficiently general as pairwise mean independent distributions, which we use later to prove \Cref{thm:PMICA}.

\begin{lemma}\label{lem:general-odeco}
    A generic symmetric odeco tensor in $S^d(\R^n)$ has a unique orthogonal basis of eigenvectors (up to sign flip).
\end{lemma}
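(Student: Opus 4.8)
The plan is to show that the set of odeco tensors is contained in the closure of the orbit $\rO(n)\bullet V_{\pmi}$, and then invoke \Cref{thm:main-symmetric} after checking that genericity is preserved. An odeco tensor has the form $\T=\sum_{j=1}^n \lambda_j \ba_j^{\otimes d}$ with $\{\ba_j\}$ an orthonormal basis, so $\T=A\bullet D$ where $A\in\rO(n)$ has columns $\ba_j$ and $D\in V_{\diag}\subseteq V_{\pmi}$ is the diagonal tensor with entries $\lambda_j$. Hence every odeco tensor lies in $\rO(n)\bullet V_{\diag}\subseteq \rO(n)\bullet V_{\pmi}$, and (when the $\lambda_j$ are nonzero and distinct) the $\ba_j$ are indeed eigenvectors: $\T(\cdot,\ba_j,\dots,\ba_j)=\lambda_j\ba_j$. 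So a generic odeco tensor has an orthogonal basis of eigenvectors and \Cref{thm:main-symmetric} would apply \emph{if} we knew the generic odeco tensor is generic as an element of the variety $\overline{\rO(n)\bullet V_{\pmi}}$ of tensors with an orthogonal basis of eigenvectors.

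The key step is therefore a dimension/irreducibility argument. First I would note that $\overline{\rO(n)\bullet V_{\pmi}}$ is an irreducible variety, being the closure of the image of the irreducible variety $\rO(n)\times V_{\pmi}$ (product of an irreducible group and a linear space) under the regular map $(A,\T)\mapsto A\bullet\T$. The subset of odeco tensors is the image of $\rO(n)\times V_{\diag}$ under the same map; its Zariski closure $\overline{\rO(n)\bullet V_{\diag}}$ is again irreducible. Since $V_{\diag}\subsetneq V_{\pmi}$ for $d\ge 3$, we have a strict containment of the parameter spaces, and one expects $\overline{\rO(n)\bullet V_{\diag}}$ to be a proper subvariety of $\overline{\rO(n)\bullet V_{\pmi}}$ — so a \emph{generic} odeco tensor is \emph{not} generic in $\overline{\rO(n)\bullet V_{\pmi}}$, and \Cref{thm:main-symmetric} cannot be cited as a black box. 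The honest route is to instead argue directly: the proof of uniqueness in \cite{ribot2025orthogonal} (their Propositions 4.2 and 6.1, referenced in the excerpt) shows that for $\T=Q\bullet\T_0$ with $\T_0\in V_{\pmi}$ in a suitable dense open set, any orthogonal eigenbasis of $\T$ is $Q$ up to signed permutation. One checks that the non-vanishing conditions defining that dense open set, when pulled back along the inclusion $V_{\diag}\hookrightarrow V_{\pmi}$, restrict to a nonempty (hence dense) open subset of $V_{\diag}$ — concretely, they reduce to the $\kappa_d(s_j)$ being nonzero and pairwise distinct, which is exactly the classical ICA genericity condition. I would spell out that this pullback is not identically zero by exhibiting one diagonal tensor satisfying it, e.g. the one with $\lambda_j=j$.

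Equivalently, and perhaps cleanest for exposition: I would invoke the fact recorded just before the lemma that for \emph{generic} $\T_0\in V_{\pmi}$, $Q\bullet\T_0\in V_{\pmi}$ iff $Q\in\rSP(n)$, together with the observation that uniqueness of the orthogonal eigenbasis is the statement that the stabilizer-type condition "$Q\bullet\T$ has its eigenbasis equal to the standard one up to $\rSP(n)$" holds; then transfer this from $V_{\pmi}$ to its subspace $V_{\diag}$ by checking the defining polynomial of the genericity locus does not vanish identically on $V_{\diag}$. Either way, the single real obstacle is the one just described: genericity in the big orbit does not automatically descend to the odeco sub-locus, so one must reopen the argument of \cite{ribot2025orthogonal} at the level of the explicit non-vanishing conditions and verify they cut out a nonempty open subset of the diagonal tensors. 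Once that is done, the conclusion — a generic odeco tensor has a unique orthogonal eigenbasis up to sign flip — is immediate, and it feeds directly into the proof of \Cref{thm:PMICA} since odeco cumulants are exactly those of independent sources.
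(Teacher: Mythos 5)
You have correctly identified the real obstacle — a generic odeco tensor is not a generic point of the larger variety $\rO(n)\bullet V_{\pmi}$, so \Cref{thm:main-symmetric} cannot be invoked as a black box — but your proposed fix does not actually close the gap. Your plan is to pull back ``the non-vanishing conditions defining the dense open set'' of \cite{ribot2025orthogonal} to $V_{\diag}$ and check they cut out a nonempty open subset, e.g.\ by testing the diagonal tensor with $\lambda_j=j$. This step is never carried out, and it cannot be carried out as described: the genericity locus in \Cref{thm:main-symmetric} is not given by explicit polynomials for general $d$ (determining these conditions is stated in \Cref{sec:proof-uniqueness-symmetric} as an open problem beyond small $d$), so there is nothing concrete to evaluate at $\lambda_j=j$. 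Verifying that one explicit odeco tensor has a unique orthogonal eigenbasis is essentially the content of the lemma itself, and your proposal defers exactly that content to an unspecified ``reopening'' of the argument in \cite{ribot2025orthogonal}. Your side claim that the pulled-back conditions ``reduce to the $\kappa_d(s_j)$ being nonzero and pairwise distinct'' is also asserted without proof, and it does not match the conditions later established in \Cref{thm:genericity-Vdiag} (distinct diagonal entries for even $d$, at most one zero entry for odd $d$), although for the lemma only some nonempty open condition would be needed.

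The paper's proof avoids the descent-of-genericity issue entirely by arguing directly on the odeco locus: by \cite[Theorem~2.3]{robeva2016orthogonal}, an odeco tensor $\T=\sum_j\lambda_j q_j^{\otimes d}$ with all $\lambda_j\neq 0$ has a completely explicit finite list of complex eigenvectors, each supported on a subset $\cJ\subseteq[n]$ with coordinates built from $(d-2)$-nd roots of unity and the scalars $\lambda_j^{-1/(d-2)}$. Any orthogonal basis of $n$ eigenvectors other than $\{q_1,\dots,q_n\}$ must use two eigenvectors whose support sets intersect, and imposing orthogonality on such a pair yields a nonzero polynomial relation in the quantities $\lambda_j^{-1/(d-2)}$, which fails for generic $\lambda_1,\dots,\lambda_n$. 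If you want to salvage your approach, you should replace the appeal to the unknown genericity polynomials by precisely this kind of explicit eigenvector analysis (or by the binary-reduction argument of \Cref{thm:genericity-Vdiag}); as written, the key verification is missing.
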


\begin{proof}
    Consider an odeco tensor $\T = \sum_{j=1}^n \lambda_j q_j^{\otimes d} \in S^d(\R^n)$ where $\lambda_1, \dots, \lambda_n \neq 0$ and $\{q_1, \dots, q_n\}$ is an orthonormal basis of $\R^n$.  Then $\T$ has $\frac{(d-1)^n - 1}{d-2}$ different eigenvectors (up to scaling) in $\C^n$ given as follows: for any $1 \leq k \leq n$, any $\mathcal{J} = \{j_1, \dots, j_k\} \subseteq [n]$, and any $(k-1)$-tuple $\eta_1, \dots, \eta_{k-1}$ of $(d-2)$-nd roots of unity, there is one eigenvector $v \in \C^n$ whose coordinates with respect to the basis $\{q_1, \dots, q_n\}$ are
    \[v_j  \coloneqq \langle v, q_j \rangle= 
    \begin{cases}
        \eta_l \lambda_{j_l}^{-\frac{1}{d-2}} & \text{ if } j = j_l \text{ for some } l \in \{1, \dots, k-1\} \\
        \lambda_{j_k}^{-\frac{1}{d-2}} & \text{ if } j = j_k \\
        0 & \text{ if } j \notin \cJ
    \end{cases}
    \]
    (see \cite[Theorem~2.3]{robeva2016orthogonal}).
    Consider a collection $\cJ_1, \dots, \cJ_n $ of non-empty subsets of $[n]$, they are pairwise disjoint if and only if $\{ \cJ_1, \dots, \cJ_n\} = \{ \{1\}, \dots, \{n\}\}$, which corresponds to the set of eigenvectors $\{q_1, \dots, q_n\}$. Suppose that $\{ \cJ_1, \dots, \cJ_n\} \neq \{ \{1\}, \dots, \{n\}\}$ and, without loss of generality, suppose that $\cJ_1 \cap \cJ_2 \neq \varnothing$.
    For any $(k-1)$-tuple of $(d-2)$-nd roots of unity, imposing orthogonality between the eigenvectors corresponding to $\cJ_1$ and $\cJ_2$ leads to a non-zero polynomial expression with complex coefficients in the parameters $\left\{\lambda_j^{-\frac{1}{d-2}} \mid j \in \cJ_1 \cap \cJ_2 \right\}$. This polynomial is non-zero for generic $\lambda_1, \dots, \lambda_n$.
\end{proof}

\begin{remark}
    The previous result shows that the orthogonal basis of eigenvectors of a general odeco tensor is unique among all complex eigenvectors. The genericity of an odeco tensor $\T = \sum_{i=1}^n \lambda_i q_{i}^{\otimes d} \in S^d(\R^n)$ depends only on the scalars $\lambda_1, \dots, \lambda_n$, not on the orthonormal basis $\{q_1, \dots, q_n \}$ (see \Cref{thm:genericity-Vdiag}).
\end{remark}

\begin{proof}[Proof of Theorem~\ref{thm:PMICA}]
    The random vector $\s$ is mean independent, so it has uncorrelated entries. Rescaling its entries does not change membership of $\kappa_d(\s) \in V_{\pmi}$, since $V_{\pmi}$ is given by zero restrictions. Therefore, we can assume that $\Cov(\s) = I$ without loss of generality.
    We can also assume that $A \in \rO(n)$, after whitening $\x$.    We show that $A$ can be recovered from the $d$-th order cumulant tensor $\kappa_d(\x)$ for some $d \geq 3$ provided $\s$ is sufficiently general. 
    The idea is to apply \Cref{thm:main-symmetric} to $\kappa_d(\x)$: from \Cref{thm:V_pmi} we know that $\kappa_d(\x) \in \rO(n) \bullet V_{\pmi}$, and using \Cref{lem:general-odeco} we can ensure that $\kappa_d(\x)$ is sufficiently general.
    Consider a random vector $\s$ with $n$ pairwise mean independent entries.
    The $d$-th order cumulant tensor of $\s$ lies in $V_\pmi$, by \Cref{thm:V_pmi}. Consider a random vector $\s^{(0)}$ with $n$ independent entries that is independent from $\s$ and such that the corresponding $d$-th cumulant (odeco) tensor $\kappa_d(\s^{(0)}) = \sum_{i=1}^n \kappa_d(s^{(0)}_i) e_{i}^{\otimes d}$ is generic in the sense of \Cref{lem:general-odeco}. For each $\alpha \in \R$, let $\s^{(\alpha)} = \alpha \s + (1-\alpha)\s^{(0)}$.
    Since $\s$ and $\s^{(0)}$ are independent and by multilinearity of cumulants, we have $\kappa_d(\s^{(\alpha)})=\alpha^d \kappa_d(\s)+(1-\alpha)^d \kappa_d(\s^{(0)})$ and so $\kappa_d(\s^{(\alpha)}) \in V_{\pmi}^d$.
    Therefore, for a generic $\alpha \in \R$, $\kappa_d(\s^{(\alpha)})$ is a sufficiently generic tensor of $V_{\pmi}^d$. Hence, if $\s = \s^{(1)}$ is a sufficiently general distribution, the mixing matrix $A$ can be recovered from $\kappa_d(\x)$ up to right-multiplication by $\rSP(n)$, by \Cref{thm:main-symmetric}.
\end{proof}

\Cref{thm:PMICA} pertains to sufficiently general distributions. Our proof requires that, for some $d \geq 3$, the $d$-th order cumulant of $\s$ is generic in $V_{\pmi}$, meaning it has a unique orthogonal basis of eigenvectors. This genericity condition may be relaxed if we consider multiple higher-order cumulants. Characterizing sufficiently general PMI distributions assuming access to all higher-order cumulants is a direction for future work. In the next section, we discuss genericity conditions fixing the order of the cumulant.

A relevant comparison is ICA. There, a sufficiently general distribution is one with at most one Gaussian source. However, non-Gaussianity is not sufficient to recover the sources from a fixed $d$-th order cumulant: one also needs the $d$-th cumulant of each non-Gaussian source to be nonzero. In principle, one may need many cumulants to recover the sources. However, non-Gaussianity is not sufficient for PMICA. For example, if $\z$ is spherical (i.e. $\z$ and $Q\z$ have the same distribution for all orthogonal matrices $Q$) then $\z$ is pairwise mean independent but the cumulants contain no information about the rotation $Q$. Being non-spherical is also not sufficient: rotating the uniform distribution on $[-1,1]^2$ by an angle of~$\pi/4$ gives a PMI distribution.

\section{Sufficiently general moments and cumulants} \label{sec:proof-uniqueness-symmetric}

So far we have studied identifiability for sufficiently general tensors or distributions. Now we investigate what it means to be sufficiently general.

\begin{theorem} \label{thm:genericity-Vpmi}
    Let $\T \in V_{\pmi} \subset S^d(\R^n)$. Then $\T$ has a unique orthogonal basis of eigenvectors if and only if
    \begin{itemize}
        \item If $d=2$, its diagonal entries are distinct.
        \item If $d=3$, at most one of its diagonal entries is zero.
        \item If $d = 4$, its diagonal entries are distinct.
    \end{itemize}
\end{theorem}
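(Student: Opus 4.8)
The plan is to recast "unique orthogonal basis of eigenvectors'' as the statement that a certain subset of $\rO(n)$ collapses to the signed permutations, dispatch $d=2$ with the spectral theorem, and for $d\in\{3,4\}$ combine an infinitesimal rigidity computation with the genericity result \Cref{thm:main-symmetric}.

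\emph{Reformulation.} A set $\{q_1,\dots,q_n\}$ is an orthogonal basis of eigenvectors of a symmetric tensor $\T$ exactly when $[Q^\top\bullet\T]_{i,j,\dots,j}=\T(q_i,q_j,\dots,q_j)=0$ for all $i\neq j$, i.e.\ when $Q^\top\bullet\T\in V_{\pmi}$ for $Q=[q_1\mid\cdots\mid q_n]$ (the eigenvalue of $q_i$ being $[Q^\top\bullet\T]_{i,\dots,i}$). Since signed permutation matrices preserve $V_{\pmi}$, for $\T\in V_{\pmi}$ the set $\Sigma_\T:=\{Q\in\rO(n):Q\bullet\T\in V_{\pmi}\}$ always contains $\rSP(n)$, and $\T$ has a unique orthogonal eigenbasis (up to sign) iff $\Sigma_\T=\rSP(n)$, so the theorem amounts to deciding exactly when this equality holds. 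For $d=2$ this is immediate: $V_{\pmi}$ is the space of diagonal matrices and $Q\bullet\T=Q\T Q^\top$, so by the spectral theorem $Q\T Q^\top$ is diagonal iff the rows of $Q$ are eigenvectors of $\T$, and this set collapses to $\rSP(n)$ precisely when every eigenspace of $\T$ is a coordinate line, i.e.\ when the diagonal entries are pairwise distinct.

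\emph{Cases $d=3,4$.} The set $\Sigma_\T$ is a compact real-algebraic subset of $\rO(n)$ containing the finite group $\rSP(n)$, so $\Sigma_\T\supsetneq\rSP(n)$ iff $\Sigma_\T$ has a positive-dimensional arc through some signed permutation, or an isolated point off $\rSP(n)$. For the first, translating by a signed permutation it suffices to inspect the Zariski tangent space at the identity, $\mathfrak g_\T=\{X\in\mathfrak{so}(n):\delta_X\T\in V_{\pmi}\}$, where $\delta_X$ is the induced $\mathfrak{so}(n)$-action. Using $\T\in V_{\pmi}$ one computes, for $i\neq j$,
\begin{align*}
[\delta_X\T]_{i,j,j}&=X_{ij}\lambda_j+2\sum_{k\neq i,j}X_{jk}\T_{ikj} && (d=3),\\
[\delta_X\T]_{i,j,j,j}&=X_{ij}(\lambda_j-3\T_{iijj})+3\sum_{k\neq i,j}X_{jk}\T_{ikjj} && (d=4),
\end{align*}
with $\lambda_i=\T_{i\cdots i}$. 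Requiring all of these to vanish yields a homogeneous linear system on the upper-triangular unknowns $X_{ij}$, whose "diagonal'' part is governed by the $\lambda_j$ (for $d=3$) or, after pairing the $(i,j)$ and $(j,i)$ equations and cancelling the pair–pair terms, by the differences $\lambda_i-\lambda_j$ (for $d=4$), while the free distinct-index entries of $\T$ provide an off-diagonal coupling. One then argues this system has only the zero solution precisely under the stated hypothesis — at most one vanishing $\lambda_i$ for $d=3$, all $\lambda_i$ distinct for $d=4$ — and, conversely, exhibits a genuine arc in $\Sigma_\T$ when the hypothesis fails (rotating the standard basis in the plane of two coordinate vectors whose diagonal entries vanish, for $d=3$, or whose diagonal entries agree, for $d=4$). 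Finally, isolated points of $\Sigma_\T$ outside $\rSP(n)$ are excluded using \Cref{thm:main-symmetric}, which already gives $\Sigma_\T=\rSP(n)$ on a dense open subset of the slice of $V_{\pmi}$ cut out by the diagonal hypothesis; combining this with the infinitesimal rigidity above and a deformation toward an odeco tensor with controlled eigenvectors (as in the proofs of \Cref{lem:general-odeco} and \Cref{thm:PMICA}) propagates $\Sigma_\T=\rSP(n)$ over the whole locus.

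I expect the crux to be the rank analysis of the linear system in $X$ for $d=3$ and $d=4$: it has to be valid for \emph{every} $\T$ satisfying the diagonal hypothesis, not merely a generic one, and once $n\ge3$ the free distinct-index entries of $\T$ couple the off-diagonal unknowns so that the system no longer decomposes into $2\times2$ blocks. Disentangling that coupling — and, relatedly, ruling out alternative orthogonal eigenbases that are not small perturbations of the standard one, where the global statement \Cref{thm:main-symmetric} rather than a local computation is needed — is where the real difficulty lies.
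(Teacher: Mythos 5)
There is a genuine gap, and it sits exactly where you flag the ``real difficulty.'' Your strategy controls only (i) the tangent space to $\Sigma_\T=\{Q:Q\bullet\T\in V_{\pmi}\}$ at points of $\rSP(n)$ and (ii) generic tensors via \Cref{thm:main-symmetric}; neither can deliver the exact if-and-only-if statement, which must hold for \emph{every} $\T$ satisfying the diagonal hypothesis. Concretely, for $d=4$ the non-uniqueness when two diagonal entries coincide does not come from an arc of $\Sigma_\T$ through a signed permutation: working in the relevant coordinate plane, the extra orthogonal eigenbasis is produced by the rotation with $a=\pm b$ (angle $\pi/4$), a point of $\Sigma_\T$ far from $\rSP(n)$, and it exists whenever $\lambda_i=\lambda_j$ regardless of $\T_{iijj}$. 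If $\lambda_1=\lambda_2$ but $\T_{1122}\neq\lambda_1/3$, your linearized system $X_{ij}(\lambda_j-3\T_{iijj})+\cdots=0$ has only the zero solution, so infinitesimal rigidity at $\rSP(n)$ holds while uniqueness fails; thus your claimed ``genuine arc'' for $d=4$ is incorrect, and the converse direction cannot be read off the tangent-space rank analysis. Symmetrically, for the forward direction the proposed patch --- \Cref{thm:main-symmetric} plus a ``deformation toward an odeco tensor'' --- only gives uniqueness on a dense open subset; uniqueness of the orthogonal eigenbasis is not preserved under limits (the bad set is the image of the non-closed set $\rO(n)\setminus\rSP(n)$ under a proper-looking but non-trivial projection), so nothing propagates to the exact boundary locus ``diagonal entries distinct'' / ``at most one zero,'' which is precisely what the theorem pins down.

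The paper avoids both problems by a global, exact computation rather than a local-plus-generic one: for $n=2$ it writes the condition $Q\bullet\T\in V_{\pmi}$ as $(t_0,t_2,\dots,t_{d-2},t_d)\in\ker M_Q$ for an explicit $2\times(d-1)$ matrix $M_Q$, and computes $\ker M_Q$ for \emph{all} $Q\in\rO(2)\setminus\rSP(2)$ (full rank for $d=3$; kernel $\langle(3,1,3)\rangle$ or, when $a=\pm b$, $\langle(1,0,1),(0,1,0)\rangle$ for $d=4$, which is exactly where your tangent computation is blind). The case $n\ge 3$ --- the coupling you rightly worry about --- is then handled not by disentangling the linear system but by the normal form $Q=P^\top RP$ of an orthogonal matrix with $2\times2$ blocks $R$, together with \cite[Lemma 5.12]{ribot2025orthogonal}, which reduces the whole question to the binary case pair by pair. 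Without an ingredient of this kind (an exact description of all of $\Sigma_\T$, or a reduction to $n=2$ valid for all $\T$), the proposal does not close.
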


\begin{proof}
For $d = 2$, the statement follows from the spectral theorem. Let $d \geq 3$, we study the binary case first, i.e., let $n = 2$ and $V_{\pmi} \subset S^d(\R^2)$. We denote the canonical basis of $\R^2$ by $\{e_0, e_1\}$. This way, the coordinates of tensors in $S^d(\R^2)$ are specified by binary strings of length $d$. For a symmetric tensor $\T \in S^d(\R^n)$, its entry $\T_{\bi}$ is determined by $|\bi| = \sum_{k=1}^d i_k$. Therefore, a symmetric tensor in $S^d(\R^2)$ is specified by $d+1$ parameters $t_0, \dots, t_d$ where $\T_{\bi} = t_{|\bi|}$ for all $\bi \in \{0,1\}^d$. This way, we have
\[
V_{\pmi} = \{ \T \in S^{d}(\R^2) \mid t_1 = t_{d-1} =0\}
\]
and its orthogonal complement is
\[
V_{\pmi}^\perp = \{ \T \in S^{d}(\R^2) \mid t_0 = t_{2} = \cdots = t_{d-2} = t_d = 0\}.
\]
Given $Q \in \rO(2)$, the linear map $Q \bullet : S^d(\R^2) \to S^d(\R^2), \T \mapsto Q \bullet \T$ can be represented by a $(d+1) \times (d+1)$ matrix indexed by $\{0, 1, \dots, d\}^2$. We define $M_Q$ as the $2 \times (d-1)$ submatrix whose rows are indexed by $\{1,d-1\}$ and columns are indexed by $\{0, 2, \dots, d-2, d\}$. The $k$-th column of this matrix is
\[
[M_Q]_k = \begin{pmatrix}
    \binom{d-1}{k} q_{00}^{d-k-1} q_{01}^k q_{10} + \binom{d-1}{k-1} q_{00}^{d-k} q_{01}^{k-1} q_{11} \\[0.5em] \binom{d-1}{k} q_{00} q_{10}^{d-k-1} q_{11}^k + \binom{d-1}{k-1} q_{01} q_{10}^{d-k} q_{11}^{k-1} 
\end{pmatrix}.
\]
Given $\T \in V_{\pmi} \subset S^d(\R^2)$, we have $Q \bullet \T \in V_{\pmi}$ if and only if $(t_0, t_2, \dots, t_{d-2}, t_d) \in \ker M_Q$.
Note that $M_{Q} = 0$ if and only if $Q \in \rSP(2)$. We are interested in $Q \notin \rSP(2)$. Let us assume, without loss of generality, that $Q \in \rSO(2)$. That is,
\[
Q = \begin{pmatrix}
    a & -b \\ b & a
\end{pmatrix} \in \rSO(2),
\]
with $a^2 + b^2 = 1$ and $a, b, \neq 0$. Indeed, if $Q \in \T \in V_{\pmi}$ for some $Q \in \rO(2) \setminus \rSO(2)$ and $\tilde{Q} \in \rSO(2)$ is obtained by flipping a column from $Q$, then $\tilde{Q} \bullet \T \in V_{\pmi}$.
If $d=3$, then
\[
M_Q = ab\begin{pmatrix}
    a & b \\
    b & -a
\end{pmatrix}
\implies  \rank(M_Q)= 2 \text{ for all }a, b \neq 0.
\]
Therefore, $\T \in V_{\pmi} \subset S^3(\R^2)$ has a unique orthogonal basis of eigenvectors if and only if~$(t_0, t_3) \neq (0,0)$, i.e., $\T \neq 0$. If $d = 4$, then
\[
M_Q = ab \begin{pmatrix}
    a^2 & -3(a^2 - b^2) & -b^2 \\
    b^2 & 3(a^2 - b^2) & -a^2
\end{pmatrix}
\implies 
\ker(M_Q) = \begin{cases}
    \langle (3,1,3)\rangle & \text{if } a \neq \pm b \\
    \langle (1,0,1), (0,1,0)\rangle & \text{if } a = \pm b.
\end{cases}
\]
So $\T \in V_{\pmi} \subset S^4(\R^2)$ has a unique orthogonal basis of eigenvectors if and only if~$t_0 \neq t_4$.

Next, we reduce the case of $n > 2$ to the binary case. Given $Q \in \rO(n)$, its normal form is a decomposition $Q = P^\top R P$, where $P \in \rO(n)$ and $R$ is block diagonal with $2 \times 2$ blocks in $\rO(2)$; see, e.g., \cite[Theorem 10.19]{roman2005advanced}.  We have $\T \in V_{\pmi}$ and $Q \bullet \T \in V_{\pmi}$ if and only if $\cS \coloneqq P \bullet \T \in P \bullet V_{\pmi}$ and $R \bullet \cS \in P \bullet V_{\pmi}$. Moreover, the coordinates of $\cS$ and $R \bullet \cS$ in the basis $\{ p_{i_1} \otimes \cdots \otimes p_{i_d} \mid i_k \in [n]\}$ are $\T_{i_1, \dots, i_d}$ and $[Q \bullet \T]_{i_1, \dots, i_d}$, respectively, where $p_j$ is the $j$-th column of $P$; see \cite[Lemma 5.12]{ribot2025orthogonal}. This reduces the problem to the binary case studied above. Therefore, $\T \in V_{\pmi} \subset S^3(\R^n)$ has a unique basis of orthogonal eigenvectors if and only if every pair of its diagonal entries are not simultaneously zero, i.e., it has at most one zero diagonal entry. For $d = 4$,  $\T \in V_{\pmi} \subset S^4(\R^n)$ has a unique basis of orthogonal eigenvectors if and only if every pair of its diagonal entries are distinct.
\end{proof}

\begin{remark}
    For PMICA, \Cref{thm:genericity-Vpmi} implies that PMI sources must follow different distributions if one wishes to identify them from the fourth-order cumulant/moment tensors.
\end{remark}

In \Cref{thm:genericity-Vpmi} we focus on the cases $d \leq 4$ because these order moments/cumulants are most commonly studied in ICA and its related methods. However, the same proof can be extended to find the genericity conditions for higher $d$. We include the next few cases in the following remark. It is an open problem to resolve the genericity conditions for all~$d$. Following the trend below, we might expect to obtain $\binom{n}{2}$ irreducible polynomials of degree~$d-3$ for $d \geq 7$.

\begin{remark} \label{remark:genericity-large-d}
    \Cref{fig:genericity_binary} shows the genericity conditions in $V_{\pmi} \subset S^d(\R^2)$ for $2 \leq d \leq 9$. Those pictures are of the three-dimensional slice obtained by setting $t_1 = 0$ and $t_k = k$ for $1 < k < d-1$, following the notation introduced in the proof above. The cases where $d \leq 4$ were addressed in \Cref{thm:genericity-Vpmi}. For $d \geq 5$, having distinct or nonzero diagonal entries is not sufficient for $\T \in V_{\pmi} \subset S^d(\R^n)$ to have a unique basis of orthogonal eigenvectors, since the kernel of the matrix $M_Q$ studied in the proof of \Cref{thm:genericity-Vpmi} is more complicated. For example, for $d = 5$ we get
    \[
    \ker(M_Q) = \langle (4(a^2 - b^2), a^2 - b^2, -ab, 2ab), (-2ab, ab, a^2-b^2, 4(a^2 - b^2)) \rangle,
    \]
    so $\T \in V_{\pmi} \subset S^5(\R^2)$ has a unique basis of orthogonal eigenvectors if and only if 
    \[
    t_{0}^{2}-2\,t_{0}t_{2}-8\,t_{2}^{2}-8\,t_{3}^{2
      }-2\,t_{3}t_{5}+t_{5}^{2} \neq 0.
    \]
    For $d = 6$, we get that $\ker(M_Q)$ is
    \begin{align*}
    \begin{cases}
        \langle (5,1,0,1,5), (5(a^2 - b^2), a^2 - b^2, -ab, 0, 0), (-10ab, 0, a^2-b^2, -2ab, 0)\rangle & \text{if } a \neq \pm b \\
        \langle (-5,1,0,0,0), (0,0,1,0,0), (5,0,0,1,0), (1,0,0,0,1)\rangle & \text{if } a = \pm b
    \end{cases}
    \end{align*}
    so $\T \in V_{\pmi} \subset S^6(\R^2)$ has a unique basis of orthogonal eigenvectors if and only if
    \[
    t_{0}-5\,t_{2}-5\,t_{4}+t_{6} \neq 0 \quad \text{and} \quad t_{0}+5\,t_{2}-5\,t_{4}-t_{6} \neq 0.
    \]
    For $d=7$, the genericity conditions in $V_{\pmi}$ are given by a quartic with 63 monomials. For~$d=8$, they are given by a quintic with 204 monomials. For $d=9$, they are given by a sextic with 752 monomials\footnote{We did these computations using \texttt{Macaulay2} \cite{M2}.}.
    Using the same argument as above, the genericity conditions in~$S^d(\R^n)$ can be obtained by reducing the problem to the binary case and arguing about pairs of coordinates. For example, for $d=5$ we obtain that $\T \in V_{\pmi} \subset S^5(\R^n)$ has a unique basis of orthogonal eigenvectors if and only if
    \[
    \T_{iiiii}^2 - 2\T_{iiiii}\T_{iiijj} - 8\T_{iiijj}^2 - 8\T_{iijjj}^2 - 2\T_{iijjj}\T_{jjjjj} + \T_{jjjjj}^2 \neq 0 \quad \text{for all } i \neq j.
    \]
\end{remark}

Recall that $V_{\diag} \subseteq V_{\pmi}$ and that the odeco tensors are the ones that lie in $V_{\diag}$ after an orthogonal change of basis. In \Cref{lem:general-odeco} we showed that a generic symmetric odeco tensor has a unique orthogonal basis of eigenvectors. The following result specifies what generic means in this context.

\begin{theorem} \label{thm:genericity-Vdiag}
    Let $\T \in V_{\diag} \subset S^d(\R^n)$.
    \begin{itemize}
        \item If $d$ is even, $\T$ has a unique basis of orthogonal eigenvectors if and only if its diagonal entries are distinct.
        \item If $d$ is odd, $\T$ has a unique basis of orthogonal eigenvectors if and only if at most one of its diagonal entries is zero.
    \end{itemize}
\end{theorem}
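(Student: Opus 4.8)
The plan is to mimic the proof of Theorem~\ref{thm:genericity-Vpmi}: first reduce to the binary case $n=2$ via the normal form of an orthogonal matrix, and then run a short explicit computation in $S^d(\R^2)$ which, because $\T$ is diagonal, collapses to a $2\times 2$ linear system.

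\emph{Setup and reduction.} Write $\T=\sum_k\lambda_k e_k^{\otimes d}$ with $\lambda_k=\T_{k\cdots k}$. As recalled just before Lemma~\ref{lem:general-odeco}, an orthogonal basis of eigenvectors of a tensor in $V_\pmi$ is the same thing as the set of rows of an orthogonal matrix $Q$ with $Q\bullet\T\in V_\pmi$, and the standard basis corresponds exactly to $Q\in\rSP(n)$; so $\T$ has a unique orthogonal basis of eigenvectors if and only if $Q\bullet\T\in V_\pmi$ forces $Q\in\rSP(n)$. Since $V_\diag\subseteq V_\pmi$, I would invoke verbatim the normal-form argument in the proof of Theorem~\ref{thm:genericity-Vpmi}: writing $Q=P^\top R P$ with $R$ block diagonal with $2\times2$ blocks in $\rO(2)$, the membership $Q\bullet\T\in V_\pmi$ decouples over the blocks, and over a block indexed by a coordinate pair $\{i,j\}$ it becomes the two-dimensional condition for the corresponding sub-tensor of $\T$ — which, as $\T$ is diagonal, is the diagonal binary tensor $\lambda_ie_0^{\otimes d}+\lambda_je_1^{\otimes d}\in V_\diag^{d,2}$ (the entries $t_1,\dots,t_{d-1}$ of this sub-tensor vanish because $\T\in V_\diag$). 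Conversely a non-standard orthogonal eigenbasis of such a binary tensor extends, by the identity outside the plane, to a non-standard one of $\T$. So the task reduces to: for $\cT=t_0e_0^{\otimes d}+t_de_1^{\otimes d}\in V_\diag^{d,2}$ there is $Q_0\in\rO(2)\setminus\rSP(2)$ with $Q_0\bullet\cT\in V_\pmi^{d,2}$ if and only if $t_0=t_d$ (when $d$ is even) or $t_0=t_d=0$ (when $d$ is odd).

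\emph{The binary computation.} After flipping a column if necessary, take $Q_0=\bigl(\begin{smallmatrix}a&-b\\ b&a\end{smallmatrix}\bigr)$ with $a^2+b^2=1$; then $Q_0\notin\rSP(2)$ means $ab\ne0$. Since $\cT$ is diagonal, $Q_0\bullet\cT=t_0(ae_0+be_1)^{\otimes d}+t_d(-be_0+ae_1)^{\otimes d}$, and the two entries that must vanish for membership in $V_\pmi^{d,2}=\{t_1=t_{d-1}=0\}$ are
\[
  [Q_0\bullet\cT]_{1}=ab\bigl(t_0a^{d-2}+(-1)^{d-1}t_db^{d-2}\bigr),\qquad
  [Q_0\bullet\cT]_{d-1}=ab\bigl(t_0b^{d-2}-t_da^{d-2}\bigr),
\]
where the subscript records the number of $1$'s in the index. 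If $d$ is odd then $(-1)^{d-1}=1$ and the coefficient matrix $\bigl(\begin{smallmatrix}a^{d-2}&b^{d-2}\\ b^{d-2}&-a^{d-2}\end{smallmatrix}\bigr)$ has determinant $-(a^{2(d-2)}+b^{2(d-2)})<0$, forcing $t_0=t_d=0$. If $d$ is even, adding the two bracketed expressions gives $(t_0-t_d)(a^{d-2}+b^{d-2})=0$; since $d-2$ is even and $ab\ne0$, this forces $t_0=t_d$, after which both conditions become $t_0(a^{d-2}-b^{d-2})=0$, which is solvable with $Q_0\notin\rSP(2)$ — take $|a|=|b|$, the $45^\circ$ rotation — precisely when $t_0=t_d$. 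This gives the binary statement.

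\emph{Conclusion.} Reading this back through the reduction, $\T\in V_\diag^{d,n}$ has a non-standard orthogonal basis of eigenvectors exactly when some coordinate pair $\{i,j\}$ has $\lambda_i=\lambda_j$ (for $d$ even) or $\lambda_i=\lambda_j=0$ (for $d$ odd); that is, when two diagonal entries of $\T$ coincide (for $d$ even) or at least two vanish (for $d$ odd). The contrapositive is the statement to be proved. For $d$ odd one can also argue directly, avoiding the reduction: the only real $(d-2)$-nd root of unity is $1$, so $\T$ has exactly one real eigenline per nonempty subset of the coordinates on which $\lambda$ is nonzero, any two such lines with overlapping supports are non-orthogonal (their inner product is a positive multiple of a sum of positive terms), and if at least two $\lambda_k$ vanish then the coordinate plane they span is an eigenspace with eigenvalue $0$; a short dimension count (any non-coordinate eigenvector forces the remaining $n-1$ basis vectors into a subspace of dimension $\le n-2$) then yields uniqueness when at most one $\lambda_k$ vanishes. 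The one point that genuinely needs care is the very first reduction — that a non-standard orthogonal eigenbasis in $\R^n$ must come from a rotation within some coordinate $2$-plane — but this is exactly the normal-form argument already carried out for all of $V_\pmi$ in the proof of Theorem~\ref{thm:genericity-Vpmi}, restricted to $V_\diag\subseteq V_\pmi$; the remaining work is just the two-line binary computation above.
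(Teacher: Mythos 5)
Your proposal is correct and takes essentially the same route as the paper's proof: reduce to $n=2$ via the normal-form argument already carried out for \Cref{thm:genericity-Vpmi}, then solve the resulting $2\times 2$ linear system in $(t_0,t_d)$ — your coefficient matrix is exactly the paper's submatrix $\tilde{M}_Q$ (up to the factor $ab$), and your odd/even case analysis reproduces its rank/kernel computation, with conclusions matching the stated genericity conditions. The direct eigenvector-counting argument you sketch for odd $d$ (via the Robeva-type description used in \Cref{lem:general-odeco}) is a reasonable alternative aside, but it is not needed and is not the paper's route.
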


\begin{proof}
    We use the same proof idea as in \Cref{thm:genericity-Vpmi}. First, let $n =2$.
    Let $\tilde{M}_Q$ be the $2 \times 2$ submatrix of the matrix $M_Q$ defined in the proof of \Cref{thm:genericity-Vpmi} corresponding to the columns indexed by $0$ and $d$. That is,
    \[
    \tilde{M}_Q = \begin{pmatrix}
        q_{00}^{d-1} q_{10} & q_{01}^{d-1}q_{11} \\
        q_{00}q_{10}^{d-1} &  q_{01} q_{11}^{d-1}
    \end{pmatrix}.
    \]
    Given $\T \in V_{\diag}$, $Q \bullet \T \in V_{\pmi}$ if and only if $(t_0, t_d) \in \ker(M_Q)$. We are interested in $Q \in \rO(2) \setminus \rSP(2)$.
    If $d$ is odd, $M_{Q}$ has full rank for all $Q \in \rO(2) \setminus \rSP(2)$. If $d$ is even, $M_{Q}$ has full rank for all $Q \in \rO(2) \setminus \rSP(2)$ except if $q_{00} = \pm q_{01}$ and, in such a case, $\ker (M_Q) = \langle (1, -1)\rangle$. This concludes the proof for $n=2$. We reduce the case $n \geq 3$ to the binary case by considering the normal form of orthogonal matrices and using the same reasoning as in the proof of \Cref{thm:genericity-Vpmi}.
\end{proof}

\begin{remark}
    In \cite{ribot2025orthogonal} we also study tensors in $\R^{n_1 \times \cdots \times n_d}$ with an orthogonal basis of singular vector tuples. In that case, $V_{\pmi}$ is replaced by an analogous linear space $V$ and the action of $\rO(n)$ on $S^d(\R^n)$ is replaced by the action of $\rO(n_1) \times \cdots \times \rO(n_d)$ on $\R^{n_1} \times \cdots \times \R^{n_d}$. For $(\R^2)^{\otimes 3}$, the set of tensors with an orthogonal basis of singular vector tuples is precisely the set of odeco tensors, i.e., $V = V_{\diag}$. Therefore, using the same reasoning as above, we can conclude that a tensor in $V \subset \R^{n_1 \times n_2 \times n_3}$ has a unique basis of orthogonal singular vector tuples if and only if at most one of its diagonal entries is zero. For $(\R^2)^{\otimes 4}$, we get that the tensors $\T \in V$ such that $(Q_1, \dots, Q_4) \cdot \T \in V$ with some $Q_k \notin \rSP(2)$ live in a four-dimensional variety consisting of $14$ linear components:
    \begin{align*}
        &\T_{0000} \pm \T_{0011} = \T_{0101} \mp \T_{0110} = \T_{1001} \mp \T_{1010} = \T_{1100} \pm \T_{1111} = 0, \\
        & \T_{0000} \pm \T_{0101} = \T_{0011} \mp \T_{0110} = \T_{1001} \mp \T_{1100} = \T_{1010} \pm \T_{1111} =0, \\
        & \T_{0000} \pm \T_{0110} = \T_{0011} \mp \T_{0101} = \T_{1010} \mp \T_{1100} = \T_{1001} \pm \T_{1111} = 0, \\
        & \T_{0000} \pm \T_{1001} = \T_{0011} \mp \T_{1010} = \T_{0101} \mp \T_{1100} = \T_{0110} \pm \T_{1111} = 0, \\
        & \T_{0000} \pm \T_{1010} = \T_{0011} \mp \T_{1001} = \T_{0110} \mp \T_{1100} = \T_{0101} \pm \T_{1111} = 0, \\
        & \T_{0000} \pm \T_{1100} = \T_{0101} \mp \T_{1001} = \T_{0110} \mp \T_{1010} = \T_{0011} \pm \T_{1111} = 0,  \\
        & \T_{0000} \pm \T_{1111} = \T_{0011} \pm \T_{1100} = \T_{0101} \pm \T_{1010} = \T_{0110} \pm \T_{1001} = 0.
    \end{align*}
\end{remark}

\section{Other relaxations of independence}\label{sec:other_relaxations}

Here, we prove \Cref{thm:general-ca}; i.e., we show generic identifiability of component analysis for any condition on the cumulants of the source variables that is weaker than independence ($V_{\diag}$) and stronger than pairwise mean independence ($V_\pmi$). Then we  study examples of such distributions. We close the section by proving \Cref{thm:pmi-maximal}; i.e., PMICA becomes generically unidentifiable if one mean independence assumption is dropped.

\begin{proof}[Proof of \Cref{thm:general-ca}]
    We use the same idea as in the proof of \Cref{thm:PMICA}. The $d$-th cumulant tensor of a sufficiently general independent distribution is generic as a tensor in $V_{\pmi}$ in the sense of \Cref{thm:main-symmetric}, by \Cref{lem:general-odeco}. We are assuming that $V_{\diag} \subseteq V$. Hence, if $\kappa_d(\s)$ is sufficiently general in $V$, it is also sufficiently general in $V_{\pmi}$. Therefore, the result follows from \Cref{thm:PMICA}.
\end{proof}
\begin{remark}
    In \Cref{thm:general-ca} we assume that $\Cov(\s) = I$ so that we can restrict our search space to $\rO(n)$ by applying the whitening transformation to $\x$. This assumption was not needed in \Cref{thm:PMICA} because PMI implies uncorrelatedness and scaling $\s$ does not change membership in the model, since $V_{\pmi}$ is given by zero restrictions. Similarly, this assumption is not needed in \Cref{thm:general-ca} whenever $V \subset S^d(\R^n)$ is given by zero restrictions.
\end{remark}

\begin{lemma} \label{lem:no-sign-condition}
    For $d \geq 3$, any real number can be obtained as the $d$-th cumulant of some probability distribution.
\end{lemma}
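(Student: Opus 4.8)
The plan is to construct, for any target value $c \in \R$ and any $d \ge 3$, an explicit probability distribution whose $d$-th cumulant equals $c$. The cleanest route is to work with a two-parameter family where the $d$-th cumulant depends linearly (or monotonically and surjectively) on a scaling parameter. Concretely, I would first fix \emph{any} distribution $\mu_0$ whose $d$-th cumulant $\kappa_d(\mu_0)$ is nonzero and finite --- for instance a centered distribution with enough finite moments, such as a shifted exponential, a Poisson variable (for which $\kappa_d = \lambda \ne 0$ for all $d$), or a centered Bernoulli; one just needs $\kappa_d \ne 0$, which is easy to arrange since the Gaussian is the only distribution with $\kappa_d = 0$ for all large $d$ (Marcinkiewicz), so for each fixed $d$ plenty of distributions have $\kappa_d \ne 0$. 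Then, using the homogeneity of cumulants under scaling, $\kappa_d(t Z) = t^d\,\kappa_d(Z)$ for $Z \sim \mu_0$ and $t \in \R$.

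The second step handles the sign. If $d$ is odd, then $t \mapsto t^d$ is a bijection from $\R$ onto $\R$, so choosing $t = (c/\kappa_d(\mu_0))^{1/d}$ (real, any sign) makes $\kappa_d(tZ) = c$ exactly, and we are done. If $d$ is even, then $t^d \ge 0$, so scaling alone only reaches values of the same sign as $\kappa_d(\mu_0)$; I would therefore keep a distribution $\mu_0$ with $\kappa_d(\mu_0) > 0$ and a second one $\mu_1$ with $\kappa_d(\mu_1) < 0$ on hand. To produce a distribution with negative even-order cumulant, one convenient choice is a symmetric Bernoulli (or more generally a bounded symmetric two-point law): for $Y$ taking values $\pm 1$ with probability $1/2$, one computes $\kappa_2 = 1$, $\kappa_4 = -2 < 0$, and in general the even cumulants of bounded variables alternate in a way that gives negative values; alternatively a uniform distribution on an interval has $\kappa_4 < 0$. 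Having both signs available, for $c \ge 0$ scale $\mu_0$ by $t = (c/\kappa_d(\mu_0))^{1/d}$, and for $c < 0$ scale $\mu_1$ by $t = (c/\kappa_d(\mu_1))^{1/d}$ (now $c/\kappa_d(\mu_1) \ge 0$, so $t$ is real); the case $c = 0$ is trivially witnessed by a Gaussian.

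The main obstacle --- really the only point requiring care --- is exhibiting, for \emph{every} even $d \ge 4$, a single concrete distribution with strictly negative $d$-th cumulant, and likewise one with strictly positive $d$-th cumulant, with a short verifiable computation rather than an appeal to tables. For the positive side the Poisson$(\lambda)$ law works uniformly in $d$ since $\kappa_d = \lambda$ for all $d \ge 1$. For the negative side I would argue via the cumulant generating function: if $Y$ is bounded, $K_Y(t) = \log \E e^{tY}$ is real-analytic, and comparing with the Gaussian one can show certain even cumulants are negative; the slickest concrete instance is the symmetric two-point law $Y \in \{-a,a\}$, whose cumulant generating function is $\log \cosh(at)$, and the Taylor coefficients of $\log\cosh$ are (up to positive factorial factors) $\pm$ the tangent numbers, which are negative exactly at the orders $4, 8, 12, \dots$ --- not all even orders. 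To cover all even $d$ uniformly, a cleaner device is: take $Y$ symmetric two-point and set $Z = Y \cdot U$ where... actually simpler, observe that for a sum $W = Y_1 + \cdots + Y_m$ of i.i.d.\ symmetric two-point variables, cumulants add, so $\kappa_d(W) = m\,\kappa_d(Y_1)$; this does not change signs. The robust fix is instead to note that the claim only requires \emph{one} distribution per $d$: for each even $d$, the family of distributions supported on, say, $\{-1,0,1\}$ with a free probability parameter $p = \P(Y = 0)$ gives $\kappa_d$ as a nonconstant polynomial in $p$ on $[0,1]$, and checking that this polynomial is not sign-definite (e.g.\ it vanishes at $p = 1$ and has nonzero derivative there, or takes a negative value at some interior $p$) yields the required negative-cumulant witness. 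I expect this last verification to be the one genuinely computational step; everything else is the homogeneity identity $\kappa_d(tZ) = t^d \kappa_d(Z)$ together with surjectivity of $t \mapsto t^d$ onto the appropriate half-line.
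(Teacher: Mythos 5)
There is a genuine gap, and it sits exactly at the one step you flag as "the genuinely computational step": producing, for \emph{every} even $d\ge 4$, a distribution whose $d$-th cumulant is strictly negative. Your scaling reduction $\kappa_d(tZ)=t^d\kappa_d(Z)$, the odd-$d$ case, and the positive-sign witness (Poisson) are all fine, but the negative-sign witness is never actually established. You correctly notice that the Rademacher law only gives negativity at orders $4,8,12,\dots$, and your fallback — the symmetric three-point family on $\{-1,0,1\}$ with $p=\P(Y=0)$ — is left unverified. Worse, one of the two verification routes you suggest fails: near $p=1$ the cumulant generating function is $\log\bigl(1+q(\cosh t-1)\bigr)$ with $q=1-p$, so every even cumulant equals $q+O(q^2)$; thus $\kappa_d$ does vanish at $p=1$ with nonzero derivative, but it is \emph{positive} just to the left of $p=1$. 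So "vanishes at $p=1$ with nonzero derivative" does not deliver a negative value, and you are left with the claim that $\kappa_d(p)$ takes a negative interior value for every even $d$ — which is precisely the nontrivial content of the lemma and is asserted, not proved (it does hold for $d=6$, e.g.\ $\kappa_6(q)=q-15q^2+30q^3<0$ for $q$ around $1/4$, but no argument is given for general even $d$).

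The paper closes this gap with a different, uniform device: take the (asymmetric) Bernoulli$(p)$ family and use the recursion $\kappa_{d+1}(p)=p(1-p)\,\frac{d}{dp}\kappa_d(p)$ together with $\kappa_d(0)=\kappa_d(1)=0$ and $\kappa_d\not\equiv 0$. Since $\kappa_d$ is a nonzero polynomial vanishing at both endpoints, its derivative takes both signs on $(0,1)$, hence $\kappa_{d+1}$ takes both signs — for every $d\ge 2$ at once, with no case analysis on $d\bmod 4$ and no order-by-order computation. If you want to keep your structure (scaling plus sign witnesses), replacing your three-point family by this Bernoulli recursion argument is the missing ingredient that makes the even-order negative case go through for all $d$.
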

\begin{proof}
    Given a random variable $z$ and $\alpha \in \R$ we have $\kappa_d(\alpha z) = \alpha^d \kappa_d(z)$, so it suffices to study the sign of cumulants.
    Let $z$ be a Bernoulli random variable with parameter $p$, and denote its $d$-th cumulant as $\kappa_d(p)$. Then $\kappa_d(p)$ is a polynomial of degree $d$ in $p$ that satisfies the following recursion: $\kappa_1 = p$ and $\kappa_{d+1} = p(1-p) \frac{d}{dp}\kappa_d$. Fix $d \geq 2$, then $\kappa_d(0) = \kappa_d(1) = 0$ and $\kappa_d(p)$ is not identically zero, so there exist $p^+, p^- \in (0,1)$ such that $\frac{d}{dp}\kappa_d(p^+) > 0$ and $\frac{d}{dp}\kappa_d(p^-) < 0$, by the mean value theorem. Therefore, $\kappa_{d+1}(p^+) > 0$ and $\kappa_{d+1}(p^-) < 0$.
\end{proof}

\begin{proposition}
     Any symmetric tensor $\T \in S^d(\R^n)$ of order $d \geq 3$ can be obtained as a cumulant tensor of some probability distribution.
\end{proposition}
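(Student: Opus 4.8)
The plan is to realize an arbitrary symmetric tensor $\T \in S^d(\R^n)$ as the $d$-th cumulant of a random vector built from independent one-dimensional pieces, using the fact that $\T$ has a symmetric tensor decomposition and that \Cref{lem:no-sign-condition} removes any sign obstruction in one dimension. Recall that every symmetric tensor of order $d \geq 2$ over $\R$ admits a real decomposition $\T = \sum_{k=1}^r c_k\, v_k^{\otimes d}$ with $c_k \in \R$ and $v_k \in \R^n$ (for instance via the symmetric CP decomposition; one may even take each $c_k = \pm 1$ by absorbing scalars, but we will not need that). Fix such a decomposition. The idea is to match each rank-one term $c_k v_k^{\otimes d}$ with an independent univariate contribution pushed along the direction $v_k$, and then add everything up using multilinearity and additivity of cumulants under independent sums.

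**Key steps, in order.** First, for each $k \in \{1,\dots,r\}$, apply \Cref{lem:no-sign-condition} to obtain a real random variable $y_k$ with $\kappa_d(y_k) = c_k$; choose the $y_k$ mutually independent (e.g.\ on a product probability space), and note that lower-order cumulants of $y_k$ are irrelevant to the argument. Second, set $\z \coloneqq \sum_{k=1}^r y_k\, v_k \in \R^n$, i.e.\ $\z = V \y$ where $V \in \R^{n \times r}$ has columns $v_k$ and $\y = (y_1,\dots,y_r)$. Third, by \Cref{lem:multilinearity} (multilinearity of cumulants, extended to rectangular matrices, which holds by the same formula), $\kappa_d(\z) = V \bullet \kappa_d(\y)$. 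Fourth, since the $y_k$ are independent, $\kappa_d(\y)$ is the diagonal tensor with entries $\kappa_d(\y)_{k\cdots k} = \kappa_d(y_k) = c_k$, so
\[
\kappa_d(\z) \;=\; V \bullet \Big(\sum_{k=1}^r c_k\, e_k^{\otimes d}\Big) \;=\; \sum_{k=1}^r c_k\, (V e_k)^{\otimes d} \;=\; \sum_{k=1}^r c_k\, v_k^{\otimes d} \;=\; \T,
\]
where we used that the $\bullet$-action sends $e_k^{\otimes d}$ to $(Ve_k)^{\otimes d} = v_k^{\otimes d}$. This exhibits $\T$ as the $d$-th cumulant tensor of the distribution of $\z$, completing the proof.

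**Main obstacle.** There is no serious obstacle: the only delicate point is ensuring the univariate building blocks exist with \emph{arbitrary real} $d$-th cumulant and no sign restriction — exactly the content of \Cref{lem:no-sign-condition}, which is why that lemma is stated just before. One should double-check two routine facts: that the multilinearity identity $\kappa_d(M\x) = M \bullet \kappa_d(\x)$ in \Cref{lem:multilinearity} is valid for a non-square $M \in \R^{n\times r}$ (it is, since it follows coordinatewise from the chain rule applied to $K_{M\x}(\bt) = K_\x(M^\top \bt)$), and that for independent coordinates the cumulant tensor of $\y$ is diagonal (this is the forward direction of the theorem on cumulants of independent variables stated in \Cref{sec:kappa_pmi}). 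Moment-existence is automatic because the $y_k$ can be taken with all moments finite (e.g.\ bounded, as in the Bernoulli construction of \Cref{lem:no-sign-condition} combined with scaling). Hence the proof is essentially "decompose $\T$, lift each rank-one piece to an independent scalar via \Cref{lem:no-sign-condition}, and push forward."
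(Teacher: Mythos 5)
Your proposal is correct and follows essentially the same route as the paper: decompose $\T = \sum_k c_k v_k^{\otimes d}$, invoke \Cref{lem:no-sign-condition} to build independent univariate variables with prescribed $d$-th cumulants, and push forward along the directions $v_k$. The only cosmetic difference is that you phrase the final step via multilinearity $\kappa_d(V\y)=V\bullet\kappa_d(\y)$ applied to the diagonal cumulant of $\y$, whereas the paper uses additivity of cumulants over the independent summands $z_k v_k$; these are the same computation.
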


\begin{proof}
    There is no sign condition on univariate cumulants for $d\geq 3$ (only for $d = 2$ where the variance must be non-negative), by \Cref{lem:no-sign-condition}. Given any $\T \in S^d(\R^n)$, consider a symmetric decomposition $\T = \sum_{i=1}^r \lambda_i \mathbf{v}_i^{\otimes d}$, which always exists by \cite[Lemma 4.2]{comon2008symmetric}. 
    Construct $r$ independent random variables $z_1, \dots, z_r$ such that $\kappa_d(z_i) = \lambda_i$ and define the random vector $\y = \sum_{i=1}^r z_i \mathbf{v}_i$. Then $\kappa_d(\y) = \sum_{i=1}^r \kappa_d(z_i\mathbf{v}_i) = \T$.
\end{proof}

The previous proposition implies that it is not necessary to assume that $V_{\diag} \subseteq V$ in \Cref{thm:general-ca}, and the statement holds as long as $V$ contains sufficiently generic tensors in $V_{\pmi}$. For example, one can take $V$ to be the subset of diagonal tensors whose first diagonal entry is zero, which contains sufficiently general tensors in $V_{\pmi}$ for $d=3,4$. This recovers the classical ICA identifiability condition that one source may be Gaussian \cite{comon1994independent}.

When we talk about a tensor $\T$ being generic in $V \subset S^d(\R^n)$ in the context of component analysis, we mean that $\{Q \in \rO(n) \mid Q \bullet \T \in V\} \subseteq \rSP(n)$, following \cite{mesters2022non}.
One benefit of \Cref{thm:general-ca} is that one does not need to study the genericity conditions for each $V \subseteq V_{\pmi}$ as long as $V_{\diag} \subseteq V$. Hence, our identifiability result can be applied to any domain-specific family of distributions containing generic independent distributions (in the sense of \Cref{thm:genericity-Vdiag}) and having cumulant tensors in $V_{\pmi} \subset S^d(\R^n)$ for some $d \geq 3$. For example, this can be applied to a parametric family of distributions satisfying such properties, which can be useful in applications. We formalize this discussion as follows.

\begin{corollary}
    Let $\mathcal{F}$ be a family of probability distributions that contains generic independent distributions and whose cumulants tensors lie in $V_{\pmi} \subset S^{d}(\R^n)$ for some $d \geq 3$. Consider the model $\x = A \s$ where $A \in \R^{n\times n}$ is invertible and $\s \in \mathcal{F}$ is sufficiently general. Then $A$ is identifiable from $\kappa_d(\x)$ (up to permutation and scaling of columns).
\end{corollary}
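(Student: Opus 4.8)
The plan is to derive this corollary directly from \Cref{thm:general-ca} together with the genericity analysis of \Cref{thm:genericity-Vpmi} (and \Cref{thm:genericity-Vdiag}), essentially as a packaging statement. First I would recall the setup: by the whitening reduction explained in \Cref{sec:pmica}, we may assume $\Cov(\s) = I$ and $A \in \rO(n)$, since the hypothesis that $\mathcal{F}$ has cumulant tensors in $V_{\pmi}$ guarantees uncorrelated entries and rescaling does not change membership in the model (the scalars are absorbed into $A$). Thus the problem reduces to recovering an orthogonal $A$ from $\kappa_d(\x) = A \bullet \kappa_d(\s)$, where $\kappa_d(\s) \in V_{\pmi} \subset S^d(\R^n)$.

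Next I would make precise what ``sufficiently general'' means for $\s \in \mathcal{F}$: it means $\kappa_d(\s)$ is generic in $V_{\pmi}$ in the sense of \Cref{thm:main-symmetric}, i.e. $\{Q \in \rO(n) \mid Q \bullet \kappa_d(\s) \in V_{\pmi}\} \subseteq \rSP(n)$. The key point is that this is a nonempty Zariski-open condition on $V_{\pmi}$ (\Cref{thm:genericity-Vpmi} gives explicit such conditions for $d \le 4$, and \Cref{remark:genericity-large-d} for higher $d$), and by hypothesis $\mathcal{F}$ contains generic independent distributions, whose $d$-th cumulant tensors (odeco tensors) are generic in $V_{\pmi}$ by \Cref{lem:general-odeco}. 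Hence the generic locus of $V_{\pmi}$ meets the set of cumulant tensors realized by $\mathcal{F}$, so ``sufficiently general $\s \in \mathcal{F}$'' is a meaningful (generic) condition within $\mathcal{F}$, not a vacuous one. I would phrase this using the homotopy/deformation argument from the proof of \Cref{thm:PMICA}: take $\s^{(0)}$ independent of $\s$ with $\kappa_d(\s^{(0)})$ generic odeco, set $\s^{(\alpha)} = \alpha \s + (1-\alpha)\s^{(0)}$, note $\kappa_d(\s^{(\alpha)}) = \alpha^d \kappa_d(\s) + (1-\alpha)^d \kappa_d(\s^{(0)}) \in V_{\pmi}$, so for generic $\alpha$ it is a generic tensor of $V_{\pmi}$ — this confirms the generic locus is hit by cumulants achievable from $\mathcal F$ when $\mathcal F$ is taken closed under the relevant mixtures, or more simply one just quotes that the generic independent distributions lie in $\mathcal F$ by assumption.

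Finally, with $\kappa_d(\s)$ generic in $V_{\pmi}$, \Cref{thm:general-ca} (applied with $V = V_{\pmi}$, or any $V$ with $V_{\diag} \subseteq V \subseteq V_{\pmi}$ containing $\kappa_d(\s)$) yields that $A$ is recoverable from $\kappa_d(\x)$ up to right multiplication by a signed permutation matrix in $\rSP(n)$, which is exactly identifiability up to permutation and scaling of columns. Undoing the whitening reintroduces a positive diagonal rescaling, which is absorbed into the column-scaling ambiguity, giving the stated conclusion. I do not expect a serious obstacle here: the corollary is a consequence of \Cref{thm:general-ca} once one checks that the genericity hypothesis on $\mathcal{F}$ is compatible, and the only thing requiring a sentence of care is verifying that the class of ``generic independent distributions'' guaranteed to lie in $\mathcal{F}$ indeed realizes tensors in the generic locus of $V_{\pmi}$ — which is precisely \Cref{lem:general-odeco} combined with \Cref{thm:genericity-Vpmi}.
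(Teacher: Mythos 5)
Your proposal matches the paper's intended argument: the corollary is presented as an immediate consequence of \Cref{thm:general-ca} (with $V=V_{\pmi}$), where the hypothesis that $\mathcal{F}$ contains generic independent distributions serves, via \Cref{lem:general-odeco} and \Cref{thm:genericity-Vdiag}, to guarantee that the sufficiently-general locus of $V_{\pmi}$ is actually realized by cumulant tensors of members of $\mathcal{F}$ — exactly the packaging you give. The only small imprecision is your claim that $\kappa_d(\s)\in V_{\pmi}$ for a single $d\ge 3$ ``guarantees uncorrelated entries''; it does not, and uncorrelatedness (diagonal covariance, so the scaling can be absorbed into $A$) is an implicit assumption here, consistent with the paper's remark following \Cref{thm:general-ca}, but this does not affect the substance of your argument.
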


The following is a consequence of \Cref{thm:general-ca}; see \cite[Proposition 7.1]{ribot2025orthogonal} for details.

\begin{corollary}
    Let $V_{\diag} \subseteq V \subseteq V_{\pmi}$. Then,
    \[
    \dim(\rO(n) \bullet V) = \dim(V) + \dim(\rO(n)) = \dim(V) + \binom{n}{2}.
    \]
\end{corollary}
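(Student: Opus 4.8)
The plan is to prove this dimension formula by combining the generic identifiability statement of \Cref{thm:general-ca} with the orbit--stabilizer theorem. First I would observe that the orbit $\rO(n) \bullet V$ is the image of the map $\Phi \colon \rO(n) \times V \to S^d(\R^n)$, $(Q, \T) \mapsto Q \bullet \T$. Since $V_{\diag} \subseteq V$, the space $V$ contains the $d$-th cumulant tensor of a sufficiently general independent distribution, which by \Cref{lem:general-odeco} and \Cref{thm:genericity-Vdiag} is generic in the sense that its only orthogonal basis of eigenvectors is the canonical one up to signs and permutations. Hence for a generic $\T \in V$, the condition of \Cref{thm:general-ca} applies: the set $\{Q \in \rO(n) \mid Q \bullet \T \in V\}$ is contained in $\rSP(n)$, and since $\rSP(n) \bullet V = V$ (because $V_\pmi$, and hence any $V$ between $V_{\diag}$ and $V_\pmi$, is cut out by zero restrictions that are preserved by signed permutations), this set equals $\rSP(n)$.

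Next I would set up the fiber dimension count. For generic $(Q, \T) \in \rO(n) \times V$, the fiber $\Phi^{-1}(\Phi(Q,\T))$ consists of pairs $(Q', \T')$ with $Q' \bullet \T' = Q \bullet \T$, i.e. $(Q^{-1}Q') \bullet \T' = \T$ with $\T' \in V$; writing $R = Q^{-1}Q'$, this forces $R \in \rSP(n)$ by the previous paragraph (applied to $\T$, which is generic), and then $\T' = R^{-1} \bullet \T$ is determined by $R$. So the fiber is $\{(QR, R^{-1}\bullet \T) \mid R \in \rSP(n)\}$, which is finite of cardinality $|\rSP(n)| = 2^n n!$. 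By the fiber dimension theorem for the dominant map $\Phi$ onto its image, $\dim(\rO(n) \bullet V) = \dim(\rO(n) \times V) - \dim(\text{generic fiber}) = \dim \rO(n) + \dim V - 0$. Finally, $\dim \rO(n) = \binom{n}{2}$ since $\rO(n)$ is a smooth manifold of that dimension (the Lie algebra of skew-symmetric matrices), giving the claimed formula.

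The main obstacle is making the fiber-dimension argument rigorous in the right category. If one works over $\R$ with smooth manifolds, $\Phi$ is a smooth map, its image $\rO(n) \bullet V$ is a constructible/semialgebraic set, and the generic rank of $d\Phi$ equals $\dim \rO(n) + \dim V$ precisely because the generic fiber is zero-dimensional (a finite set of signed permutations); one should check that $d\Phi$ has full rank at a generic point, which follows from the fiber being $0$-dimensional plus the fact that $\Phi$ restricted to a neighborhood is a submersion onto its image there. Alternatively, and more cleanly, I would complexify: view $V_\C \subseteq S^d(\C^n)$ and $\rO(n,\C)$, note $\rO(n,\C) \bullet V_\C$ is a constructible set whose dimension is computed by the same orbit-stabilizer count since the generic stabilizer-type fiber is still finite (the signed permutations, a finite group, independent of the field), and then the real orbit $\rO(n)\bullet V$ has the same dimension because it is Zariski-dense in $\rO(n,\C)\bullet V_\C$ near a generic real point — or simply because $\rO(n) \times V$ is a real manifold of dimension $\binom{n}{2} + \dim V$ mapping with finite generic fibers onto $\rO(n)\bullet V$. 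Either way the substantive input is entirely contained in \Cref{thm:general-ca} (finiteness of the fiber), and the remaining work is the standard dimension bookkeeping plus the identity $\dim \rO(n) = \binom{n}{2}$.
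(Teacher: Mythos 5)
Your argument is essentially the paper's: the paper proves this corollary by deferring to the fiber-dimension count behind \cite[Proposition 7.1]{ribot2025orthogonal}, which is exactly your computation --- the map $\Phi\colon \rO(n)\times V\to S^d(\R^n)$ has finite generic fibers because a generic $\T\in V$ (guaranteed to exist since $V\supseteq V_{\diag}$ contains generic odeco tensors) has a unique orthogonal basis of eigenvectors, so $Q\bullet\T\in V_{\pmi}$ forces $Q\in\rSP(n)$. One sub-claim in your write-up is false, though: an arbitrary linear space $V$ with $V_{\diag}\subseteq V\subseteq V_{\pmi}$ need \emph{not} be $\rSP(n)$-invariant (e.g.\ for $d=4$, $n=3$, take $V=V_{\diag}+\R\,\T'$ with $\T'$ supported on the entry $\T'_{1122}$; a coordinate permutation moves it out of $V$), so the fiber need not equal the full orbit $\{(QR,R^{-1}\bullet\T):R\in\rSP(n)\}$ nor have cardinality $2^n n!$. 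This does not damage the proof: the fiber is nonempty and \emph{contained} in that finite set, and finiteness of the generic fiber is all the dimension count requires.
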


When we recover the mixing matrix from cumulants, the ICA model corresponds to symmetric odeco tensors $\rO(n) \bullet V_{\diag}$ and the PMICA model corresponds to tensors with and orthogonal basis of eigenvectors $\rO(n) \bullet V_{\diag}$. 
Given $d \geq 3$, $\dim (\rO(n) \bullet V_{\diag}) = \binom{n+1}{2} \asymp n^2$ and 
$\dim(\rO(n) \bullet V_{\pmi}) = \binom{n+d-1}{d} - \binom{n}{2} \asymp n^d$. This shows how PMICA is more expressive than ICA: given a cumulant tensor $\kappa_d(\x)$ in practice, we expect to approximate it better  with $\rO(n) \bullet V_{\pmi}$ than with $\rO(n) \bullet V_{\diag}$.

\subsection{Examples between independence and PMI}

We give examples of conditions that are stronger than PMI but weaker than independence, so \Cref{thm:general-ca} applies.

Pairwise mean independence is stronger than uncorrelatedness: if $z_1$ is mean independent of $z_2$, then $\E(z_1z_2) = \E(\E(z_1 \mid z_2) z_2) = \E(\E(z_1) z_2) = \E(z_1)\E(z_2)$. To see that both notions are not equivalent, take the vector $(z,z^2)$ where $z$ is standard Gaussian. In this case ${\rm cov}(z,z^2)=\E (z^3)=0$ but $\E(z^2 \mid z)=z^2$, which is not equal to $\E(z^2)=1$ almost surely. 

\subsubsection{Reflectional symmetries on cumulants}
The conditions in \cite{mesters2022non} (general common covariance, multiple scaled elliptical distribution, mean independent) are special cases of our setting, since the set of reflectionally invariant tensors $V_{\mathrm{refl}}$ is included in $V_{\pmi}$. Indeed, $V_{\mathrm{refl}} =  \{ \T \in S^d(\R^n) \mid \T = D \bullet \T \text{ for all } D \in \diag(\{\pm1 \}^n)\}$, where $\diag(\{\pm1 \}^n)$ is the set of diagonal matrices with $\pm1$ on the diagonal \cite[Section 5]{mesters2022non}, so in $S^d(\R^n)$ we get
\[
\dim(V_{\mathrm{refl}}) = \begin{cases}
    0 & \text{if } d = 2k + 1\\
    \binom{n + k - 1}{k} & \text{if } d = 2k,
\end{cases}
\]
which is smaller than $\dim(V_{\mathrm{pmi}}) = \binom{n + d - 1}{d} - n(n-1)$.

\subsubsection{Mean independence} 

Define
$$
V_{\mi}\;:=\;\{\cT\in S^d(\R^n)\mid \cT_{i,j_2,\ldots,j_d} = 0 \mbox{ if }i\neq j_2,\ldots,j_d\}.
$$
In words, $\cT_{j_1,j_2,\ldots,j_d} = 0$ if there is an index that appears only once in the tuple $(j_1,j_2,\ldots,j_d)$.
\Cref{thm:V_pmi} holds replacing PMI by mean independence and $V_{\pmi}$ by $V_{\mi}$. To see this, we can follow the proof of \Cref{thm:V_pmi} replacing \eqref{eq:wedgepi0} with
\begin{equation}\label{eq:wedgepi02}
    \E(z_i z_{j_2}\cdots z_{j_d})\;=\;\E \big(z_{j_2}\cdots z_{j_d}\,\E(z_i \mid z_{\setminus i})\big)\;=\;\E(z_i)\E(z_{j_2}\cdots z_{j_d})\qquad(i\ne j_2,\ldots,j_d).
\end{equation}
In \eqref{eq:KinM} specialize $i_1,i_2,\ldots,i_d$ to $i,j_2,\ldots,j_d$ with $i\neq j_2,\ldots,j_d$ with the same special partition $\pi_0$ to conclude that with mean independence $\kappa_d(\x)\in V^{d,n}_{\rm pmi}$.

We have the containment $V_{\mi}^{d}\subseteq V_{\pmi}^{d}$ with strict inclusion for $d \geq 3$ provided $n \geq 3$, so mean independence implies PMI. This property can also be seen as follows.

\begin{proposition}
    If $\z=(z_1,\ldots,z_n)$ is mean independent, it is pairwise mean independent. 
\end{proposition}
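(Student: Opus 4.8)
The plan is to deduce pairwise mean independence directly from mean independence via the tower (smoothing) property of conditional expectation, without passing through cumulants. Fix $i\neq j$. Since $j\neq i$, the coordinate $z_j$ is one of the entries of the vector $\z_{\setminus i}$, hence $\sigma(z_j)\subseteq\sigma(\z_{\setminus i})$. The smoothing property then gives
\[
\E(z_i\mid z_j)\;=\;\E\big(\E(z_i\mid \z_{\setminus i})\,\big|\,z_j\big).
\]
By the mean independence hypothesis, $\E(z_i\mid \z_{\setminus i})=\E(z_i)$, which is a constant, and conditioning a constant on anything returns that constant, so $\E(z_i\mid z_j)=\E(z_i)$. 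As $i\neq j$ were arbitrary, $\z$ is pairwise mean independent.

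I would also remark that this is the probabilistic shadow of the inclusion $V_{\mi}^{d}\subseteq V_{\pmi}^{d}$ recorded above: under the extra assumption that $K_\z$ is finite near the origin one could instead invoke the mean-independence analogue of \Cref{thm:V_pmi} together with that tensor inclusion to reach the same conclusion. However, the tower-property argument requires only integrability of the $z_i$, so it is the one I would present as the proof.

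I do not expect a genuine obstacle here. The only point to state carefully is that $\E(z_i\mid z_j)$ and $\E(z_i\mid\z_{\setminus i})$ are $L^1$ equivalence classes, so all the identities above hold almost surely, which is precisely what the definitions of mean independence and pairwise mean independence demand; in particular the chain of equalities should be read as equalities in $L^1$.
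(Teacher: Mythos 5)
Your argument is correct and is essentially identical to the paper's proof, which also applies the tower property $\E(z_i\mid z_j)=\E\big(\E(z_i\mid \z_{\setminus i})\mid z_j\big)=\E(z_i)$ using that $\sigma(z_j)\subseteq\sigma(\z_{\setminus i})$. Your added remarks on $L^1$ equivalence classes and the tensor-space inclusion are fine but not needed.
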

\begin{proof}
By the tower property of conditional expectation, for any $i\neq j$,
   $\E(z_i \mid z_j) = \E(\E(z_i \mid z_{\setminus i}) \mid z_j) = \E(z_i)$ almost surely.
\end{proof}

\subsubsection{Pairwise independence} A random vector $\z$ is \emph{pairwise independent} if $z_i \independent z_j$ for all $i \neq j$. This is equivalent to $\kappa(\z)_{i_1, \dots, i_d} = 0$ whenever $|\{ i_1, \dots, i_d\}| = 2$, provided that $K_\z(\mathbf{t})$ is sufficiently smooth around zero. Let
\[
V_{2-\mathrm{indep}}^d = \{ \T \in S^d(\R^n) \mid \T_{i_1, \dots, i_d} = 0 \text{ if } |\{ i_1, \dots, i_d\}| = 2\}.
\]
Let $\cI_{2-\mathrm{indep}} = \{ \bi \in [n]^d \mid 1 \leq i_1 \leq \dots \leq i_d \leq n, |\{ i_1, \dots, i_d\}| = 2\}$. Then $\codim(V_{2-\mathrm{indep}}^d) = |\cI_{2-\mathrm{indep}}| = (d-1)\binom{n}{2}$ and $V_{2-\mathrm{indep}}^d \subseteq V_{\mathrm{PMI}}^d$ for all $d \geq 2$.

\subsubsection{$k$-wise independence}. Let $k \in [n]$. A random vector $\z$ is \emph{$k$-wise independent} if every set of $k$ distinct entries of $\z$ are jointly independent. 

\begin{remark}
    Independence $\implies$ $k$-wise independence ($k \geq 2$) $\implies$ pairwise independence. More precisely,
    $k$-wise independence $\implies$ $(k-1)$-independence.
\end{remark}

If $K_\z(\mathbf{t})$ is sufficiently smooth around zero, then $k$-wise independence is equivalent to~$\kappa(\z)_{i_1, \dots,i_d} = 0$ whenever $2 \leq |\{ i_1, \dots, i_d\}| \leq k$. Let 
\[
V_{k-\mathrm{indep}}^d = \{ \T \in S^d(\R^n) \mid \T_{i_1, \dots, i_d} = 0 \text{ if } 2 \leq |\{ i_1, \dots, i_d\}| \leq k\}
\]
Let
$\cI_{k-\mathrm{indep}} = \{ \bi \in [n]^d \mid 1 \leq i_1 \leq \dots \leq i_d \leq n, 2 \leq |\{ i_1, \dots, i_d\}| \leq k\}$. Then $\codim(V_{k-\mathrm{indep}}^d) = |\cI_{k-\mathrm{indep}}| = \sum_{j=2}^k \binom{d-1}{j-1} \binom{n}{j}$ and $V_{k-\mathrm{indep}}^d \subseteq V_{\mathrm{PMI}}^d$ for all $d \geq 2$.

\begin{remark}
    We have $V_{k-\mathrm{indep}}^d = V_{(k+1)-\mathrm{indep}}^d \subset S^d(\R^n)$ whenever $k \geq \min\{d, n\}$. That is, the $d$-th order cumulant does not distinguish between $k$-wise independence and independence whenever $k \geq \min\{d, n\}$.
\end{remark}

\subsubsection{Correlation of energies}\label{sec:correlation-energies} Let $z_i = \sigma_i \varepsilon_i$, where $\sigma_i,\varepsilon_i$ are random variables, $\varepsilon_i \independent \varepsilon_j$ for all $i \neq j$ and $\sigma_i \independent \varepsilon_j$ for all $i,j$. That is, $z_i \independent z_j \mid \sigma_i, \sigma_j$ but $z_i \not\independent z_j$. Assume that $\sigma_i > 0 \text{ a.s}$, $\E (\varepsilon_i) = 0$ and $\E (\varepsilon_i^2) = 1$. Hence, $\E(z_i) = 0$. Topographic ICA (tICA) \cite{hyvarinen2001topographic} is a particular case of this model.

\begin{proposition}
    Correlation of energies implies pairwise mean independence.
\end{proposition}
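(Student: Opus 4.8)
The plan is to verify the definition of pairwise mean independence directly: fix $i \neq j$ and show $\E(z_i \mid z_j) = \E(z_i) = 0$ almost surely, using the tower property of conditional expectation together with the factorizations $z_i = \sigma_i \varepsilon_i$ and $z_j = \sigma_j \varepsilon_j$. Since $\E(z_i)=0$ is given, and mean independence conditions are unaffected by centering, this is exactly what is needed.

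First I would condition on the pair $(\sigma_i, z_j)$ before conditioning on $z_j$ alone:
\[
\E(z_i \mid z_j) \;=\; \E\big(\E(z_i \mid \sigma_i, z_j)\,\big|\, z_j\big) \;=\; \E\big(\sigma_i\,\E(\varepsilon_i \mid \sigma_i, z_j)\,\big|\, z_j\big),
\]
where the last step pulls the $\sigma_i$-measurable factor out of the inner conditional expectation. The crux is the inner term $\E(\varepsilon_i \mid \sigma_i, z_j)$: I would argue that $\varepsilon_i$ is independent of the $\sigma$-algebra generated by $(\sigma_i, z_j)$. Indeed, $(\sigma_i, z_j) = (\sigma_i, \sigma_j \varepsilon_j)$ is a measurable function of $(\sigma_i, \sigma_j, \varepsilon_j)$, and under the model's independence hypotheses the innovation $\varepsilon_i$ is independent of the collection $(\sigma_1,\dots,\sigma_n,\varepsilon_k : k \ne i)$, hence of $(\sigma_i, \sigma_j, \varepsilon_j)$. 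Therefore $\E(\varepsilon_i \mid \sigma_i, z_j) = \E(\varepsilon_i) = 0$, which forces $\E(z_i \mid \sigma_i, z_j) = 0$ and so $\E(z_i \mid z_j) = 0$. Since $i \ne j$ was arbitrary, this is pairwise mean independence.

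The one place needing care — and the main obstacle — is the precise independence bookkeeping. The hypotheses are stated pairwise ($\varepsilon_i \independent \varepsilon_j$ for $i\neq j$ and $\sigma_i \independent \varepsilon_j$ for all $i,j$), whereas the argument above uses that $\varepsilon_i$ is \emph{jointly} independent of $(\sigma_i, \sigma_j, \varepsilon_j)$; pairwise independence alone does not yield this. I would therefore read the model in the standard way underlying topographic ICA and the correlated-energy models cited: the innovations $(\varepsilon_1,\dots,\varepsilon_n)$ form a mutually independent family that is jointly independent of the scale vector $\bs\sigma = (\sigma_1,\dots,\sigma_n)$. Under that reading the required joint independence is immediate and no further computation is needed. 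A minor routine point is integrability of the conditional expectations: $\E(z_i^2) = \E(\sigma_i^2)\,\E(\varepsilon_i^2) = \E(\sigma_i^2)$, so it suffices to assume $\E(\sigma_i^2) < \infty$, which is implicit whenever mean independence is discussed at the level of second moments.
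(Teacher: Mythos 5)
Your proof is correct and follows essentially the same route as the paper, whose one-line argument $\E(\sigma_i\varepsilon_i\mid \sigma_j\varepsilon_j)=\E(\sigma_i\mid\sigma_j\varepsilon_j)\,\E(\varepsilon_i)=0$ is exactly your tower-property computation in compressed form. Your remark that the factorization needs $\varepsilon_i$ to be jointly independent of $(\sigma_i,\sigma_j,\varepsilon_j)$ (not merely the pairwise conditions as literally stated) is a fair reading of an assumption the paper's proof also uses implicitly, so no substantive difference remains.
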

\begin{proof}
    $
    \E(z_i \mid z_j) = \E(\sigma_i \varepsilon_i \mid \sigma_j \varepsilon_j) = \E(\sigma_i \mid \sigma_j \varepsilon_j) \E(\varepsilon_i) = 0 = \E(z_i).
    $
\end{proof}

\begin{example}[Broadcasting on trees]
Let $T=(V,E)$ be a rooted tree with root denoted by $0$ and non-root leaves denoted by $\{1, \dots, n\}$.  
For a vertex $v\in V$, we write $u \preceq v$ if $u$ lies on the unique path from the root $0$ to $v$. Let $\{\tau_v \mid v \in V\}$ be independent random variables with zero mean. Consider the broadcasting process $\{z_v: v\in V\}$ defined as: $z_v = \prod_{u \preceq v} \tau_u$. Then $\z = (z_1, \dots, z_n)$ is pairwise mean independent.
\end{example}

\subsubsection{Spherical distributions} In spherical distributions the components are mean independent and pairwise mean independent but not independent unless $Z$ is Gaussian; see \cite{kelker1970distribution,rossell2021dependence}.  However, cumulants of spherical distributions are insufficiently general in~$V_\pmi$. This is because their distribution is preserved under rotation.

\subsection{Generic non-identifiability beyond pairwise mean independence}

We prove \Cref{thm:pmi-maximal}. Being mean independent is not symmetric: $\E(z_1 \mid z_2) = \E(z_1)$ does not imply that $\E(z_2 \mid z_1) = E(z_2)$. For example, take $z_1 = U(-1,1)$ and $z_2 = z_1^2$. Then $\E(z_1 \mid z_2) = 0 = \E(z_1)$ but $\E(z_2 \mid z_1) = z_1^2 \neq 1/3 = \E(z_2)$ almost surely. If $z_2$ is not mean independent of $z_1$ we cannot guarantee that $\kappa(\x)_{21\dots 1}$ is zero. The following shows that dropping one mean independence condition leads to generic unidentifiability of the component analysis model. That is, if $s_1$ is mean independent of $s_2$ but $s_2$ is not mean independent of $s_1$, then there are many rotations that preserve this property.

\begin{lemma}\label{lem:Vpmi-maximal}
    Let $d \geq 3$ and consider a linear space $V_{\pmi} \subsetneq V \subseteq S^{d}(\R^n)$ given by zero restrictions. Let $\T \in V$ be generic. Then, there exists $Q \in \rO(n) \setminus \rSP(n)$ such that $Q \bullet \T \in V$.
\end{lemma}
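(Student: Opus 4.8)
The statement says: if $V_{\pmi} \subsetneq V$ with $V$ given by zero restrictions, then a generic $\T \in V$ admits a nontrivial orthogonal symmetry $Q \notin \rSP(n)$ fixing $V$. Since $V \supsetneq V_{\pmi}$ strictly and both are coordinate subspaces (given by "$\T_{\bi} = 0$ for $\bi$ in some index set"), the set of zero-restriction indices defining $V$ is a *proper subset* of those defining $V_{\pmi}$. The indices for $V_{\pmi}$ are exactly the "$(i,j,\dots,j)$ with $i \ne j$" entries, i.e. the $2\binom{n}{2}$ entries of the form $\T_{ijjj\cdots j}$ counting ordered pairs. So there is at least one pair $(i_0, j_0)$ with $i_0 \ne j_0$ such that $V$ does *not* impose $\T_{i_0 j_0 \cdots j_0} = 0$; i.e. $V \supseteq V_{\pmi} + \langle E \rangle$ where $E$ is the symmetric tensor supported on the orbit of the index $(i_0, j_0, \dots, j_0)$. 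The goal reduces to exhibiting, for generic $\T$ in this larger space, a rotation $Q$ with $Q \bullet \T \in V$.

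**Strategy: reduce to $n = 2$ and a dimension count on $\rO(2)$.** The plan is to mimic the normal-form reduction used in the proofs of \Cref{thm:genericity-Vpmi} and \Cref{thm:genericity-Vdiag}. Restrict attention to the $2$-dimensional coordinate plane spanned by $e_{i_0}, e_{j_0}$ and let $Q$ act as a rotation $R_\theta \in \rSO(2)$ on that plane and as the identity on the orthogonal complement. For such $Q$, the only entries of $Q \bullet \T$ that can move out of a prescribed zero locus are the "binary" entries $[Q \bullet \T]_{\bi}$ with all indices in $\{i_0, j_0\}$; all other entries are unchanged. Among the binary entries, the ones whose vanishing is *required* by $V$ form a proper subset of $\{t_1 = 0, t_{d-1} = 0\}$ (in the $t_0, \dots, t_d$ notation of \Cref{thm:genericity-Vpmi}) — in fact by the strict-inclusion hypothesis, $V$ restricted to this plane requires at most *one* of $t_1, t_{d-1}$ to vanish (we may need to be careful: $V$ could fail to contain $V_{\pmi}$-strictly on *every* plane, but it fails on the plane through $i_0, j_0$, which is what we use). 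So we must produce $\theta$ with $[R_\theta \bullet \cS]_1 = 0$ where $\cS$ is the binary slice of $\T$, or with the appropriate single linear condition holding. This is one polynomial equation in $(a,b) = (\cos\theta, \sin\theta)$ on the circle.

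**Solving the single equation on the circle.** Write $p(a,b) := [R_\theta \bullet \cS]_{\text{relevant index}}$, a homogeneous polynomial of degree $d$ in $(a,b)$ with coefficients linear in the entries $t_0, \dots, t_d$ of the slice $\cS$. We must show $p$ has a zero on the unit circle with $(a,b) \notin \{(\pm 1, 0), (0, \pm 1)\}$ (those correspond to signed permutations). Restricting $p$ to the circle and substituting, say, the rational parametrization or viewing $p(a,b)/b^d$ as a degree-$d$ polynomial in $a/b$, we get a univariate polynomial of odd degree when $d$ is odd — hence a real root — and for even $d$ one argues that the coefficients (which are affine-linear in the $t_k$, hence generic) can be chosen, and then persist generically, so that a real root exists and avoids the excluded points. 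Concretely: the condition "$p$ has no root on the circle away from $\rSP(2)$" is a closed (semialgebraic) condition on $\cS$, and it suffices to exhibit *one* $\cS \in V|_{\text{plane}}$ for which a genuine rotation works; then genericity of $\T$ in $V$ finishes it. Producing that one example should be easy: start from an odeco-type slice and perturb in the one unconstrained direction.

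**Main obstacle.** The delicate point is bookkeeping the exact linear conditions $V$ imposes on the binary slice through the plane $\langle e_{i_0}, e_{j_0}\rangle$, and checking that dropping *one* of the two $V_{\pmi}$-constraints on that plane genuinely leaves enough freedom — i.e. that the relevant polynomial $p$ is not forced to vanish identically on the circle by the *remaining* constraints, and that its real roots are not all at signed permutations. For small $d$ this is transparent from the explicit $M_Q$ matrices in \Cref{thm:genericity-Vpmi} (e.g. for $d = 3$ the rank-$2$ matrix $ab\left(\begin{smallmatrix} a & b \\ b & -a\end{smallmatrix}\right)$ drops to rank $1$ exactly when we ask for only one constraint, giving a $1$-dimensional kernel realized by a genuine rotation), and the general-$d$ case follows the same pattern; but writing the argument uniformly in $d$ — showing the single surviving linear form on $(t_0, t_2, \dots, t_d)$ defined by "$[R_\theta \bullet \cS] = 0$" is not a coboundary that only the excluded $\theta$ satisfy — is where the real work lies. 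I would handle it by noting $p(a,b)$ as a function of $\theta$ is a nonconstant trigonometric polynomial (its leading behavior in $a$ is a nonzero multiple of $t_d$ or $t_0$, generically nonzero), hence has a full set of roots, at most finitely many, and generic choice of the remaining free entry pushes a root off the four special points.
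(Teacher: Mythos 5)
The core difficulty is exactly where you locate it, but your way of resolving it does not work. Your final step argues: failure (``$p$ has no root on the circle away from $\rSP(2)$'') is a closed semialgebraic condition, so exhibiting one good slice plus genericity of $\T$ in $V$ finishes the proof. That inference is invalid: a semialgebraic failure set can have nonempty interior, and genericity in the Zariski sense only excludes proper algebraic subsets. Moreover, the parity count is not the one you state: since $\T\in V$ already satisfies the retained constraint, $\theta=0$ is automatically a root of $p$ (by homogeneity $p(a,0)=a^d\,p(1,0)=0$), so $p(a,b)=b\,q(a,b)$ with $\deg q=d-1$; hence for \emph{odd} $d$ the residual form $q$ has even degree and can be definite on an open set. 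Concretely, take $d=3$, $n=2$, $V=\{\T_{122}=0\}$ with free coordinates $(a,b,c)=(\T_{111},\T_{112},\T_{222})$. Then $Q\bullet\T\in V$ iff $Q^\top e_2$ is a real eigenvector of $\T$, and the eigendirections of $\T$ other than $e_2$ are the real roots of $-b\,x^2+a\,xy+(2b-c)\,y^2$, which has none whenever $a^2+8b^2-4bc<0$ (e.g.\ $(a,b,c)=(1,1,10)$). On this nonempty open subset of $V$ the only $Q$ with $Q\bullet\T\in V$ are signed permutations, so no ``one example $+$ genericity'' argument of the kind you propose can close the gap; your rank-one-kernel heuristic for $d=3$ conflates ``for each $Q$ there are tensors in the kernel'' with ``for a fixed generic tensor there is such a $Q$.'' (For comparison, the paper's binary case is run differently: it identifies $V$ with the tensors having $e_2$ as an eigenvector and argues through real eigenvectors and fibers of $(Q,\T)\mapsto Q\bullet\T$, rather than solving a trigonometric equation; in either formulation the parity issue above is the crux, and your proposal does not resolve it.)

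Separately, your reduction to $n=2$ rests on the claim that a rotation in the $(e_{i_0},e_{j_0})$-plane leaves every non-binary entry unchanged. That is false: entries with exactly one index outside the plane do move, and among them are zero restrictions that $V$ still imposes. For $d=3$ and $k\notin\{i_0,j_0\}$, one has $[Q\bullet\T]_{k\,i_0\,i_0}=a^2\,\T_{k i_0 i_0}-2ab\,\T_{k i_0 j_0}+b^2\,\T_{k j_0 j_0}=-2ab\,\T_{k i_0 j_0}$ after imposing the retained zeros, and $\T_{k i_0 j_0}$ is unconstrained, hence generically nonzero; for general $d$ the same happens through the mixed entries $\T_{k, i_0^{\,m} j_0^{\,d-1-m}}$. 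So an in-plane rotation with $ab\neq0$ generically exits $V$, and the problem does not reduce to a single equation on a circle: any correct treatment of $n\ge3$ must also control these $2(n-2)$ extra constraints. (The paper's own $n\ge3$ step uses a block-diagonal $Q$ in the same spirit, but your stated justification for the reduction is incorrect as written, and the binary-case gap above remains in any case.)
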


\begin{proof}
    Consider first $n=2$. Then $V_{\pmi} = \{ \T \in S^d(\R^2) \mid \T_{12\ldots2} = \T_{21\ldots1} = 0\}$. Suppose that $V = \{ \T \in S^d(\R^2) \mid \T_{12\ldots2} = 0\}$, the argument for the other cases is analogous. Then $V$ is the set of symmetric tensors with $e_2$ as an eigenvector, so $\rO(2) \bullet V = S^d(\R^2)$ because every symmetric tensor has an eigenvector. A generic tensor in $S^d(\R^2)$ has $\frac{(d-1)^2 - 1}{d-1}$ distinct complex eigenvectors \cite[Theorem 5.5]{cartwright2013number}. At least two of these eigenvectors are real, corresponding to the maximizer and minimizer of $\T(x, \dots, x)$ subject to $\|x\|=1$. Moreover, these two real eigenvectors are not orthogonal to each other by genericity of $\T$ and \cite[Proposition 7.1.]{ribot2025orthogonal}.
    This means that a generic fiber of the parametrization $\phi : \rO(2) \times V \to S^d(\R^n), (Q, \T) \mapsto Q \bullet \T$ is finite, but it is not included in $\rSP(2) \times V$. Hence, for a generic $\T \in V$, there exists $Q \in \rO(2) \setminus \rSP(2)$ such that $Q \bullet \T \in V$.

    Next, let $n \geq 3$ and let $\cI_{\pmi} = \{ (i, j, \dots, j) \in [n^d] \mid i \neq j\}$. Then $V_{\pmi} = \{\T \in S^{d}(\R^n) \mid \T_{\bi} = 0 \text{ for all } \bi \in \cI_{\pmi}\}$. Let $\cI = \cI_{\pmi} \setminus \{(2,1,\dots,1)\}$ and let $V = \{\T \in S^{d}(\R^n) \mid \T_{\bi} = 0 \text{ for all } \bi \in \cI\}$. By the binary case, given a generic $\T \in V$, there exists a matrix 
    \[
    Q = \begin{pmatrix}
        \tilde{Q} & & &\\
        & 1 & & \\
        & & \ddots &\\
        & & & 1
    \end{pmatrix}
    \]
    with $\tilde{Q} \in \rO(2)\setminus \rSP(2)$ such that $Q \bullet \T \in V$.
\end{proof}

\begin{proof}[Proof of \Cref{thm:pmi-maximal}]
Without loss of generality, suppose $s_2$ is not mean independent of $s_1$. Here $\s$ being general means that for all $d \geq 3$, $\kappa_d(\s)$ is generic in $V = \{ \T \in S^{d}(\R^n) \mid \T_{ij\dots j} = 0 \text{ for all } i \neq j, (i,j) \neq (2,1)\}$, so the statement follows by \Cref{lem:Vpmi-maximal}.
\end{proof}

\Cref{thm:pmi-maximal} says that one cannot identify $A$ from $\kappa_d(\x)$ if
if one does not imposes all the zero-restrictions $\kappa_d(\s)$ coming from mean independence. Generic in this context means that~$\kappa_d(\s) \not \in V_{\pmi}$ for any $d \geq 3$. However, \Cref{thm:general-ca} says that it is enough to have a generic $\kappa_d(\s) \in V_{\pmi}$ for only one $d \geq 3$ to have identifiability of the model, even if $\s$ is not PMI. The following is an example of such a distribution.

\begin{example}
    Let $\s = (s_1, s_2)$ with $s_1 \sim \mathcal{N}(0,1)$ and $s_2 = (s_1^2 - 1)/\sqrt{2}$. We have $\E(s_1) = \E(s_2) = 0$ and $\E(s_1 \mid s_2) = 0$ but $\E(s_2 \mid s_1) = (s_1^2 - 1)/\sqrt{2} \neq 0$ almost surely, so $\s$ is not PMI. However, $\kappa_4(\s)_{1222} = \kappa_4(\s)_{2111} = 0$, so $\kappa_4(\s) \in V_{\pmi}$. Moreover, $\kappa_4(\s)$ is sufficiently general in $V_{\pmi}$ in the context of \Cref{thm:general-ca} because $\kappa_4(\s)_{1111} = 0 \neq \kappa_4(\s)_{2222} = 12$; see \Cref{thm:genericity-Vpmi}. Therefore, in the model $\x = A \s$, $A$ can be identified from $\kappa_4(\x)$, up to permutation and sign flip of columns.
\end{example}

\section{A minimum-distance estimator:
         consistency and finite-sample behavior}
\label{sec:estimator}

Our identifiability analysis applies to any model obtained by restricting the cumulant at some fixed $d \geq 3$ to lie in a linear subspace $V \subset S^d(\R^n)$ with 
\begin{equation}\label{eq:sandwich}
V_{\diag}^{d,n}\ \subseteq\ V\ \subseteq\ V_{\pmi}^{d,n}.    
\end{equation}
Our two main examples are ICA and PMICA.
We observe $\x=A\s$, where $\s$ is centered with $\Cov(\s)=I$, and we assume that $\kappa_d(\s)$ lies in $V$ and is generic in the sense of Section~\ref{sec:proof-uniqueness-symmetric}.

\subsection{Minimum distance estimator}

Consider i.i.d.\ samples $\x_1,\dots,\x_N\in\R^n$ from $\x$. Stack the samples to form the matrix $X=[\,\x_1,\dots,\x_N\,]^\top\in\R^{N\times n}$ and whiten them to obtain $X_w$. Then $\x_w = \tilde{A} \s$ with $\tilde{A} \in \rO(n)$,
as explained in Section~\ref{sec:pmica}.

For $d\ge3$, let $\widehat h_d(\x_w)$ denote an order-$d$ estimator, for example the sample moment tensor $\widehat\mu_d(\x_w)$, sample cumulant tensor $\widehat\kappa_d(\x_w)$, or an alternative cumulant estimator such as the order-$d$ $k$-statistics; see \cite[Chapter~4]{mccullagh2018tensor}. Let $h_d(\x_w)$ be the population version.
\medskip

\noindent\textbf{Population formulation.}
Let $\Pi_W$ be the orthogonal projector (with the Frobenius inner product) onto a subspace $W\subset S^d(\R^n)$. For a fixed $V$ satisfying \eqref{eq:sandwich} define
\begin{equation*}
g(Q)\ :=\ \Pi_{V^\perp}\!\bigl(Q^{\top}\bullet h_d(\x_w)\bigr),\qquad Q\in\rO(n).
\end{equation*}
Using multilinearity of moments and cumulants (Lemma~\ref{lem:multilinearity}),
\[
g(Q)=\Pi_{V^\perp}\!\bigl((Q^\top\tilde A)\bullet h_d(\s)\bigr).
\]
The squared Euclidean distance between the tensor $Q^{\top}\bullet h_d(\x_w)$ and the linear space $V$ is found by the optimization problem
\begin{equation}\label{eq:FU}
    \mbox{minimize}\quad F(Q):=\|g(Q)\|_{\rm F}^2,\qquad Q\in\rO(n).
\end{equation}
The following lemma follows from \Cref{thm:general-ca}.

\begin{lemma}[Population identification]\label{lem:pop-ident}
Suppose $h_d(\s)$ is generic in $V$. Then $g(Q)=0$ if and only if $Q^\top\tilde A\in{\rm SP}(n)$. Hence, the minimizers of $F$ are $\tilde AP$ with $P\in{\rm SP}(n)$.
\end{lemma}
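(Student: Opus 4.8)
The plan is to unpack the definition of $g$ and reduce the claim to \Cref{thm:general-ca} (equivalently, to the genericity statement following \Cref{thm:main-symmetric}). First I would observe that, since $\Pi_{V^\perp}$ is the orthogonal projector onto $V^\perp$, we have $g(Q)=0$ if and only if $Q^\top\bullet h_d(\x_w)\in V$, which by multilinearity (\Cref{lem:multilinearity}) is the same as $(Q^\top\tilde A)\bullet h_d(\s)\in V$. So the statement is equivalent to: for $R\in\rO(n)$, $R\bullet h_d(\s)\in V$ iff $R\in\rSP(n)$, where $R=Q^\top\tilde A$.

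Next I would invoke genericity. By hypothesis $h_d(\s)$ is generic in $V$, which in the sense of Section~\ref{sec:proof-uniqueness-symmetric} means precisely that $\{R\in\rO(n)\mid R\bullet h_d(\s)\in V\}\subseteq\rSP(n)$. The reverse inclusion $\rSP(n)\subseteq\{R\mid R\bullet h_d(\s)\in V\}$ holds because $V$ (lying between $V_{\diag}$ and $V_{\pmi}$, both of which are $\rSP(n)$-invariant, being defined by zero restrictions on entries indexed by multi-indices whose coordinate multiset is fixed under signed permutation) is stable under the $\rSP(n)$-action: signed permutations only permute the diagonal entries and rescale by $\pm1$, preserving membership in $V$. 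Hence $R\bullet h_d(\s)\in V\iff R\in\rSP(n)$, i.e. $g(Q)=0\iff Q^\top\tilde A\in\rSP(n)$.

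Finally, for the claim about minimizers: since $F(Q)=\|g(Q)\|_{\rm F}^2\ge0$ and $h_d(\s)\in V$ (as $\s$ has the prescribed cumulant structure), taking $Q=\tilde A$ gives $g(\tilde A)=\Pi_{V^\perp}(h_d(\s))=0$, so $\min_{Q\in\rO(n)}F(Q)=0$. Therefore the minimizers are exactly the zero set of $g$, which by the previous paragraph is $\{Q\in\rO(n)\mid Q^\top\tilde A\in\rSP(n)\}=\{\tilde AP^{-1}\mid P\in\rSP(n)\}=\{\tilde AP\mid P\in\rSP(n)\}$, using that $\rSP(n)$ is a group closed under inverses.

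The argument is essentially a bookkeeping unwrapping of definitions, so there is no serious obstacle; the one point requiring a little care is making the equivalence ``generic in $V$'' $\Leftrightarrow$ ``$\{R:R\bullet h_d(\s)\in V\}\subseteq\rSP(n)$'' line up cleanly with how genericity was invoked in \Cref{thm:general-ca} and \Cref{thm:PMICA} — in particular checking that for $V=V_{\diag}$ one uses \Cref{lem:general-odeco}/\Cref{thm:genericity-Vdiag} and for $V=V_{\pmi}$ one uses \Cref{thm:main-symmetric}/\Cref{thm:genericity-Vpmi}, and that for intermediate $V$ genericity in $V$ implies genericity in $V_{\pmi}$ exactly as in the proof of \Cref{thm:general-ca}. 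Once that is noted, the lemma follows immediately.
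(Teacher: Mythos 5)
Your proposal is correct in substance and follows the route the paper intends: the paper gives no separate argument for this lemma, saying only that it follows from \Cref{thm:general-ca}, and your unwinding --- $g(Q)=0$ iff $(Q^\top\tilde A)\bullet h_d(\s)\in V$ by \Cref{lem:multilinearity}, then the genericity containment $\{R\in\rO(n)\mid R\bullet h_d(\s)\in V\}\subseteq\rSP(n)$ in the sense of Section~\ref{sec:proof-uniqueness-symmetric}, then $F\ge 0$ with $g(\tilde A)=\Pi_{V^\perp}(h_d(\s))=0$ --- is exactly the intended bookkeeping.

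The one step that does not hold up as written is your justification of the reverse inclusion. Being sandwiched between $V_{\diag}$ and $V_{\pmi}$ does not by itself make $V$ stable under the $\rSP(n)$-action: for instance $V=\{\T\in V_{\pmi}\subset S^4(\R^3)\mid \T_{1122}=0\}$ satisfies $V_{\diag}\subseteq V\subseteq V_{\pmi}$, yet the permutation exchanging coordinates $2$ and $3$ sends any tensor with $\T_{1133}\neq 0$ outside $V$; signed permutations permute off-diagonal entries among themselves, not only diagonal ones. So the implication $Q^\top\tilde A\in\rSP(n)\Rightarrow g(Q)=0$, and with it the claim that \emph{every} $\tilde AP$ is a minimizer, genuinely requires $\rSP(n)$-invariance of $V$ --- an assumption the paper leaves implicit but which holds for all the spaces it actually uses ($V_{\diag}$, $V_{\pmi}$, $V_{\mi}$, $V_{k-\mathrm{indep}}$, $V_{\mathrm{refl}}$), since their defining zero patterns depend only on the multiset structure of the index. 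The identifiability content of the lemma --- every minimizer $Q$ satisfies $Q^\top\tilde A\in\rSP(n)$, and $\tilde A$ attains the minimum value $0$ --- needs only the forward direction plus $h_d(\s)\in V$, which your argument establishes correctly; you should either add the invariance of $V$ as an explicit hypothesis (or verify it for the $V$ at hand) rather than deduce it from the sandwich condition.
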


Letting $\Sigma=\Cov(\x)$, recall from Section~\ref{sec:pmica} that $\tilde A=\Sigma^{-1/2}A$ and so we recover $A$ up to signed permutation via
\begin{equation}\label{eq:A0fromtilde}
    \Sigma^{1/2}\,\tilde AP\;=\;AP,\qquad P\in{\rm SP}(n).
\end{equation}

\begin{remark}[Equivalent viewpoints]
Lemma~\ref{lem:pop-ident} shows that the minimizers $Q$ of $F$ are signed permutations of $\tilde A$.  
Equivalently, the columns of $Q^\top\tilde A$ are an orthonormal set of eigenvectors of the tensor $h_d(\s)$.  
From a variational perspective, the eigenvectors are the stationary points of
\[
u\ \longmapsto\ \langle h_d(\s),\,u^{\otimes d}\rangle
\]
on the unit sphere.  
See \cite[Sec.~4]{ribot2025orthogonal} for a discussion of this eigenvector formulation.
\end{remark}

\noindent\textbf{Sample formulation.}
Mimicking the population construction, we estimate $\tilde A$ by solving
\begin{equation}\label{eq:Ustar}
 \widehat Q\ \in\ \argmin_{Q\in\rO(n)}\ F_N(Q):=\|g_N(Q)\|_{\rm F}^2,
 \qquad
 g_N(Q):=\Pi_{V^\perp}\!\bigl(Q^{\top}\bullet \widehat h_d(\x_w)\bigr).
\end{equation}
We solve this optimization problem 
using Riemannian gradient descent (RGD) on $\rO(n)$. Analogously to \eqref{eq:A0fromtilde}, to estimate $A$, compute the sample covariance $\widehat \Sigma$ of the data and set
\[
\widehat A_N\;=\;\widehat{\Sigma}^{1/2}\,\widehat Q.
\]

\subsection{Large-sample theory}

The criterion $F_N(Q)=\|\Pi_{V^\perp}(Q^\top\bullet\widehat h_d)\|_{\rm F}^2$ leads to the Generalized Method of Moments (GMM) estimator, with moment conditions the defining equations of $V$.  The standard GMM results apply; we state them for completeness and refer to \cite[Propositions~6.2--6.3]{mesters2022non} and \cite{hansen1982gmm} for proofs, generalizations, and discussion.

\begin{proposition}[Consistency]\label{prop:consistency}
Let $\x_{1},\dots,\x_{N}$ be i.i.d. samples from $\x=A\s$. Assume that $\kappa_{2}(\s)=I_{n}$, $\kappa_{d}(\s)$ is generic in $V$, and $\E\|\x\|^{d}<\infty$. Then $\widehat A_N\to_p AP$ for some $P\in{\rm SP}(n)$.
\end{proposition}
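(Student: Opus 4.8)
The plan is to recognize Proposition~\ref{prop:consistency} as a standard consistency statement for an extremum (GMM-type) estimator and to verify the three ingredients of the classical argument: (i) the sample objective converges uniformly in probability to a population objective; (ii) the population objective is uniquely minimized, up to the symmetry group $\rSP(n)$, at the truth; and (iii) the parameter space is compact. The compactness of $\rO(n)$ handles (iii) for free, and the identification in (ii) is exactly Lemma~\ref{lem:pop-ident}, so the real content is (i) together with a careful statement of what ``$\widehat A_N\to_p AP$'' means given the residual $\rSP(n)$ ambiguity.

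First I would fix notation: write $X_w$ for the whitened sample matrix, $\widehat h_d=\widehat h_d(\x_w)$ for the chosen order-$d$ estimator (sample moment tensor, sample cumulant tensor, or $k$-statistic), and recall $g_N(Q)=\Pi_{V^\perp}(Q^\top\bullet\widehat h_d)$ with $F_N(Q)=\|g_N(Q)\|_{\rm F}^2$; the population analogues are $g(Q)=\Pi_{V^\perp}(Q^\top\bullet h_d(\x_w))$ and $F(Q)=\|g(Q)\|_{\rm F}^2$. The key probabilistic input is $\widehat h_d\to_p h_d(\x_w)$, which holds because each entry of $\widehat h_d$ is a polynomial (with bounded coefficients) in empirical moments of $\x_w$ of order at most $d$, and under $\E\|\x\|^d<\infty$ the weak law of large numbers gives convergence of those empirical moments; the whitening matrix $\widehat\Sigma^{-1/2}$ also converges in probability to $\Sigma^{-1/2}$ by continuity of the symmetric square-root on positive definite matrices (the assumption $\kappa_2(\s)=I_n$ together with invertibility of $A$ makes $\Sigma=AA^\top$ positive definite), so $\widehat h_d(\x_w)$ is a continuous-plus-$o_p(1)$ function of the data moments. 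Since $Q\mapsto\Pi_{V^\perp}(Q^\top\bullet T)$ is jointly continuous in $(Q,T)$ and $\rO(n)$ is compact, the map $\sup_{Q\in\rO(n)}|F_N(Q)-F(Q)|\to_p 0$ follows from a standard uniform-LLN / continuous-mapping argument (equivalently, $F_N\to F$ uniformly because $F_N-F$ is a continuous function of $(\widehat h_d,\widehat\Sigma)$ that vanishes at the limit, on a compact domain).

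Next I would invoke Lemma~\ref{lem:pop-ident}: under $\kappa_2(\s)=I_n$ and genericity of $\kappa_d(\s)$ (hence of $h_d(\s)$) in $V$, the population objective $F$ attains its minimum value $0$ precisely on the set $\{\tilde A P: P\in\rSP(n)\}$, where $\tilde A=\Sigma^{-1/2}A\in\rO(n)$. This is a finite set of well-separated points in $\rO(n)$. The standard argmax/argmin consistency theorem (e.g.\ \cite[Thm.~2.1]{newey1994large} or \cite[Prop.~6.2]{mesters2022non}) then gives that any measurable selection $\widehat Q\in\argmin_{Q\in\rO(n)}F_N(Q)$ satisfies $\dist(\widehat Q,\{\tilde AP:P\in\rSP(n)\})\to_p 0$; because the target set is finite, this is equivalent to the existence of a (data-dependent) $P_N\in\rSP(n)$ with $\widehat Q-\tilde AP_N\to_p 0$. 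Finally, multiplying by $\widehat\Sigma^{1/2}\to_p\Sigma^{1/2}$ and using $\Sigma^{1/2}\tilde A=A$ yields $\widehat A_N=\widehat\Sigma^{1/2}\widehat Q\to_p \Sigma^{1/2}\tilde AP_N=AP_N$, which is the claim.

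The main obstacle, such as it is, is bookkeeping rather than depth: one must be careful that the $\rSP(n)$ ambiguity in Lemma~\ref{lem:pop-ident} means consistency can only be asserted up to a signed permutation, and that the minimizer set is a finite discrete set so that ``convergence to the set'' upgrades to ``convergence to one of its points''. A secondary point worth stating explicitly is the moment condition: $\E\|\x\|^d<\infty$ is exactly what is needed for the order-$d$ empirical moments (and hence $\widehat h_d$, whichever estimator is used) to satisfy the WLLN, and it transfers to $\x_w$ since $\x_w$ is a fixed linear image of $\x$ up to an $o_p(1)$ perturbation of the whitening matrix. Everything else is the textbook extremum-estimator argument, so I would keep the proof short, cite \cite{hansen1982gmm} and \cite[Prop.~6.2--6.3]{mesters2022non}, and spell out only the verification of uniform convergence and the identification-to-finite-set step.
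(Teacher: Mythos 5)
Your proposal is correct and takes essentially the same route as the paper: the paper proves Proposition~\ref{prop:consistency} simply by appealing to standard GMM/extremum-estimator consistency (citing \cite{hansen1982gmm} and \cite[Propositions~6.2--6.3]{mesters2022non}), and what you spell out—uniform convergence of $F_N$ to $F$ on the compact group $\rO(n)$ under $\E\|\x\|^{d}<\infty$, identification of the minimizer set $\tilde A\,\rSP(n)$ via Lemma~\ref{lem:pop-ident}, convergence to this finite set, and the transfer $\widehat A_N=\widehat\Sigma^{1/2}\widehat Q\to_p AP$—is precisely that textbook argument, including the correct handling of the signed-permutation ambiguity.
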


Let ${\rm vec}g:\R^{n^2}\to \R^{\dim(V^\perp)}$ denote the map $g$ after vectorizing its domain and codomain. Define ${\rm vec}g_N$ similarly. Let $G(A)\in \R^{n^2\times \dim(V^\perp)}$ denote the Jacobian of ${\rm vec}g$ at $A$ and define $\Xi=\lim_{N\to \infty} \Var(\sqrt{N}\,{\rm vec}g_N(AP))$.

\begin{proposition}[Asymptotic normality]
Under the assumptions of Proposition~\ref{prop:consistency}, assume additionally that $\E\|\x\|^{2d}<\infty$. Write $G:=G(AP)$. Then
\[
\sqrt{N}\,\operatorname{vec}(\widehat A_N-AP)\ \to \mathcal{N}\!\Bigl(0,\ (G^\top G)^{-1}G^\top \,\Xi\, G\,(G^\top  G)^{-1}\Bigr).
\]
\end{proposition}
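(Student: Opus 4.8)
The plan is to recognize \eqref{eq:Ustar} as a just-identified (or over-identified) GMM problem after unwinding the Riemannian optimization into a Euclidean estimating-equation form, and then to invoke the standard GMM delta-method argument. First I would fix the target $A_0 := AP$ supplied by Proposition~\ref{prop:consistency} and work in a neighborhood of $A_0$ in $\rO(n)$, using a local chart (e.g.\ the Cayley transform or the exponential map) so that $Q$ is parametrized smoothly by coordinates in $\R^{\binom n2}$; however, it is cleaner to avoid a chart and instead pass through the embedding $\rO(n)\hookrightarrow\R^{n^2}$, treating $F_N$ as the squared norm of the $\R^{\dim(V^\perp)}$-valued map ${\rm vec}\,g_N$. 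The sample minimizer $\widehat Q$ satisfies the first-order condition that the Riemannian gradient of $F_N$ vanishes; equivalently, $G_N(\widehat Q)^\top\,{\rm vec}\,g_N(\widehat Q)$ lies in the normal space of $\rO(n)$ at $\widehat Q$, where $G_N$ is the Jacobian of ${\rm vec}\,g_N$. By Lemma~\ref{lem:pop-ident}, $g(A_0)=0$ at the population level, and genericity of $\kappa_d(\s)$ in $V$ (via \Cref{thm:general-ca} and \cite[Prop.~7.1]{ribot2025orthogonal}) guarantees that $A_0$ is an isolated minimizer, i.e.\ $G:=G(A_0)$ has full column rank $\dim(V^\perp)$ when restricted appropriately; this non-degeneracy is what makes the asymptotic variance formula well-posed.

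The key steps, in order: (i) \emph{Moment expansion.} Write ${\rm vec}\,g_N(A_0) = \Pi_{V^\perp}\bigl(A_0^\top\bullet \widehat h_d(\x_w)\bigr)$, linear in $\widehat h_d(\x_w)$; since $\widehat h_d$ is an empirical average of functions of the i.i.d.\ whitened data with $\E\|\x\|^{2d}<\infty$, a standard CLT (accounting for the estimated whitening $\widehat\Sigma^{-1/2}$, whose first-order effect is itself an i.i.d.\ average by another delta method) gives $\sqrt N\,{\rm vec}\,g_N(A_0)\to\mathcal N(0,\Xi)$ with $\Xi=\lim_N\Var(\sqrt N\,{\rm vec}\,g_N(A_0))$. (ii) \emph{Consistency + Taylor.} Proposition~\ref{prop:consistency} gives $\widehat Q\to_p A_0$; expanding the first-order optimality condition $0 = G_N(\widehat Q)^\top{\rm vec}\,g_N(\widehat Q) + (\text{normal-space term})$ around $A_0$ and using $G_N\to_p G$, $g\in C^1$, and $\sqrt N(\widehat Q - A_0)=O_p(1)$, the second-derivative remainder is $o_p(1)$ and the normal-space Lagrange term is negligible at first order because ${\rm vec}\,g(A_0)=0$. (iii) \emph{Solve.} This yields $\sqrt N\,{\rm vec}(\widehat Q - A_0) = -(G^\top G)^{-1}G^\top\sqrt N\,{\rm vec}\,g_N(A_0) + o_p(1)$, and combining with the CLT from (i) and Slutsky gives the sandwich covariance $(G^\top G)^{-1}G^\top\Xi\,G(G^\top G)^{-1}$. (iv) \emph{Transfer to $\widehat A_N$.} Since $\widehat A_N = \widehat\Sigma^{1/2}\widehat Q$ and $A_0 = \Sigma^{1/2}A_0'$ with $A_0'=\tilde A P$, apply the delta method once more to the smooth map $(\Sigma,Q)\mapsto\Sigma^{1/2}Q$; under $\kappa_2(\s)=I$ we have $\Sigma = AA^\top$ and the $\widehat\Sigma$ fluctuation contributes to $\Xi$ and $G$ consistently with the statement, so that $\sqrt N\,{\rm vec}(\widehat A_N - AP)$ has the claimed limit. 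Throughout, I would simply cite \cite[Props.~6.2--6.3]{mesters2022non} and \cite{hansen1982gmm} for the underlying GMM machinery rather than re-derive it.

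The main obstacle I anticipate is \textbf{handling the estimated whitening step cleanly}: $\widehat h_d(\x_w)$ depends on $\x$ through $X_w = X\widehat\Sigma^{-1/2}$ (up to the orthogonal ambiguity of the square root), so $\widehat Q$ is a functional of both the higher cumulant estimator and $\widehat\Sigma^{-1/2}$, and one must check that the composite map is differentiable at the truth and that the two sources of sampling noise combine into a single i.i.d.\ average to which the CLT applies — this is exactly the point where the moment condition $\E\|\x\|^{2d}<\infty$ is used (it controls the variance of the order-$d$ statistic after the $O_p(N^{-1/2})$ perturbation of the whitening matrix). A secondary, more bookkeeping-level obstacle is keeping the Riemannian constraint honest: one must verify that the normal-bundle Lagrange-multiplier term in the first-order condition is genuinely $o_p(N^{-1/2})$, which follows because it is proportional to ${\rm vec}\,g(\widehat Q)=O_p(N^{-1/2})$ times a bounded factor, but it should be stated rather than swept away. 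Everything else is the routine GMM delta-method calculation, which is why stating the proposition "for completeness" with a pointer to \cite{mesters2022non,hansen1982gmm} is the right level of detail.
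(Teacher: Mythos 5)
Your proposal is the standard GMM/delta-method argument, and that is precisely the route the paper takes: it states the proposition ``for completeness'' and delegates the proof to the standard GMM references \cite[Propositions~6.2--6.3]{mesters2022non} and \cite{hansen1982gmm}, exactly as you yourself conclude is the right level of detail. Your extra attention to the estimated whitening step and to the Riemannian first-order condition is consistent with (and somewhat more explicit than) what the paper records, so there is no substantive discrepancy.
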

This shows that our estimator has the standard good asymptotic properties: consistency and asymptotic normality. Next, we provide finite sample bounds for sub-Gaussian data.

\subsection{Finite-sample error bounds under sub-Gaussianity}\label{sec:finitebounds}

We develop basic finite sample analysis. We focus on moments to keep things simple; c.f. Corollary~\ref{cor:momentssuff}. The proof of \Cref{prop:param-error} and technical material used in this section is in \Cref{sec:appendix-finite-bounds}.

\begin{assumption}[Sub-Gaussian data]\label{asmp:sg-4} The data $\x_1,\ldots,\x_N$ are whitened, so $\x\in\mathbb{R}^n$ satisfies $\E[\x]=0$ and $\E[\x\x^\top]=I$, and $\x_1,\ldots,\x_N$ are i.i.d.\ copies of $\x$. We also assume $\x$ is a  sub-Gaussian vector, that is, there exists $\sigma>0$ such that $\langle u,\x\rangle$ is $\sigma$-sub-Gaussian for every unit vector $u \in \R^n$.
\end{assumption}

Define the $d$-th order sample moment tensor $\widehat\mu_d \;=\; \tfrac{1}{N}\sum_{r=1}^N \x_r^{\otimes d}$ and the population moment tensor  
$\mu_d \;=\; \E[\x^{\otimes d}]$. We bound the spectral norm (see \cite{lim2005singular})
\begin{equation}\label{eq:spnorm}
\|\widehat\mu_d-\mu_d\|_2\;:=\;\max_{\|u\|=1} \<\widehat\mu_d-\mu_d,u^{\otimes d}\>.   
\end{equation}
For any
unit vector $u\in\R^n$, define $\z=u^\top \x$ and $\z_i=u^\top \x_i$ for $i=1,\ldots,N$. Then
\[
\langle \widehat\mu_d- \mu_d, u^{\otimes d}\rangle \;=\; \frac{1}{N}\sum_{i=1}^N \left(z_i^d-\E\![\z^d]\right)\;=\;\frac1N\sum_{i=1}^N \left(\z_i^d-\E [\z_i^d]\right).
\]
This allows the use of concentration of measure techniques to bound $\|\widehat\mu_d-\mu_d\|_2$. The next result follows directly from \cite[Theorem 2.1]{al2025sharp} (with $H=\R^n$ and $\Sigma=I$).
\begin{proposition}[Concentration bound]\label{prop:mu4-ml}
Under Assumption~\ref{asmp:sg-4}, there is a constant $C>0$ (depending only on $d$ and the sub-Gaussian parameter $\sigma$) such that for all $\tau>0$,
\[
\Pr\!\left(
\|\widehat\mu_d-\mu_d\|_2
\ \le\
C\,\!\left(
\sqrt{\frac{n}{N}} \;+\; \frac{n^{d/2}}{N}+\sqrt{\frac{\tau}{N}}+\frac{\tau^{d/2}}{N}
\right)
\right)\ \ge\ 1-e^{-\tau}.
\]
\end{proposition}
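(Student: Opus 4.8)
The plan is to reduce the claim to a one-dimensional concentration statement and then invoke the cited sharp bound of \cite{al2025sharp}. First I would recall the variational identity already established in the excerpt: for any unit vector $u \in \R^n$, writing $\z = u^\top \x$ and $\z_i = u^\top \x_i$,
\[
\langle \widehat\mu_d - \mu_d, u^{\otimes d}\rangle = \frac{1}{N}\sum_{i=1}^N\bigl(\z_i^d - \E[\z_i^d]\bigr),
\]
so that $\|\widehat\mu_d - \mu_d\|_2 = \sup_{\|u\|=1}\frac{1}{N}\sum_{i=1}^N(\z_i^d - \E[\z_i^d])$. Under \Cref{asmp:sg-4}, each $\z = u^\top\x$ is $\sigma$-sub-Gaussian uniformly over unit $u$, hence $\z^d$ has a sub-Weibull tail of order $2/d$, and the family $\{\z^d : \|u\|=1\}$ is exactly the setting covered by a uniform Bernstein-type bound for empirical processes indexed by powers of linear functionals.

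The second step is to match notation with \cite[Theorem 2.1]{al2025sharp}: take the Hilbert space $H = \R^n$, the covariance $\Sigma = \E[\x\x^\top] = I$ (whitened data), and the relevant "effective dimension" $\Tr(\Sigma)/\|\Sigma\| = n$. Their theorem bounds $\sup_{\|u\|=1}\bigl|\frac1N\sum_i (\langle u,\x_i\rangle^d - \E\langle u,\x\rangle^d)\bigr|$ with high probability by a sum of four terms: a Gaussian-width term $\sqrt{n/N}$, a higher-order term $n^{d/2}/N$ coming from the sub-Weibull moments, and the two deviation terms $\sqrt{\tau/N}$ and $\tau^{d/2}/N$ controlling the $1 - e^{-\tau}$ failure probability. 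The constant $C$ absorbs the dependence on $d$ and $\sigma$ only. Substituting directly gives the stated inequality; there is essentially nothing to prove beyond verifying that the hypotheses of \cite[Theorem 2.1]{al2025sharp} (i.i.d. sub-Gaussian coordinates in the sense of linear functionals, and the choice $\Sigma = I$) hold, which is immediate from \Cref{asmp:sg-4}.

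The only mild subtlety — and the step I would be most careful about — is bookkeeping of the normalization: \cite{al2025sharp} may state the bound for $\langle u,\x\rangle^d$ with $u$ ranging over the unit sphere versus over $\Sigma^{1/2}$-normalized vectors, and may phrase the effective dimension as $r(\Sigma) = \Tr(\Sigma)/\lambda_{\max}(\Sigma)$; since $\Sigma = I$ here, $r(\Sigma) = n$, so no discrepancy arises, but one should confirm that the sub-Gaussian parameter enters only through $C$ and not through the dimension-dependent terms. Once this is checked, the proposition follows immediately, and I would simply cite the theorem and record the specialization $H = \R^n$, $\Sigma = I$ as in the statement.
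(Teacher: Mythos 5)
Your proposal is correct and follows essentially the same route as the paper: reduce $\|\widehat\mu_d-\mu_d\|_2$ to the supremum over unit vectors of the centered empirical averages of $(u^\top\x)^d$, then apply \cite[Theorem 2.1]{al2025sharp} with $H=\R^n$ and $\Sigma=I$, with the constant absorbing the dependence on $d$ and $\sigma$. The paper treats this as an immediate consequence of the cited theorem, exactly as you do, and your extra care about the effective dimension $r(\Sigma)=n$ is a harmless consistency check.
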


The bounds on $\|\widehat\mu_d-\mu_d\|_2$ imply  bounds on the rotation recovery $\|\widehat Q-Q\|_{\rm F}$ in the objectives of our estimation problem in \eqref{eq:FU} and \eqref{eq:Ustar}. That is, since $\widehat \mu_d$ is close to $\mu_d$, the optima of $F_N$ should be close to those of $F$. Quantifying this relies on the Hessian of $F$. 

Extend $F$ to be a function on $\R^{n\times n}$. Its derivative $\D F(Q)$ at $Q\in \rO(n)$ is a linear functional on $\R^{n\times n}$. Similarly, the second derivative $\D^2 F(Q)$  is a linear map $\D^2 F(Q):\R^{n\times n}\to \R^{n\times n}$. Restricting $\D^2 F(Q)[\Delta]$ to $\Delta$ in the tangent space $U=T_Q\rO(n)$ and projecting the image to $U$, gives the Riemannian Hessian ${\rm Hess} F(Q):U\to U$. In other words,
\begin{equation*}
{\rm Hess} F(Q) [\Delta]\;:=\; \Pi_U (\D^2 F(Q)[\Delta])\qquad\mbox{for } \Delta\in U.    
\end{equation*}

\begin{proposition}[Parameter error in Frobenius norm]\label{prop:param-error}
Let $\widehat Q\in \mathrm O(n)$ be any empirical minimizer of $F_N$ in \eqref{eq:Ustar}. Assume there is a population minimizer $Q^\star$ of $F(Q)=\|g(Q)\|_{\rm F}^2$ such that the following curvature condition holds at every point of its orbit $Q^\star{\rm SP}(n)$:
there exists $\kappa>0$ with
\[
\langle \mathrm{Hess}F(Q)[\Delta],\ \Delta\rangle \ \ge\ \kappa\,\|\Delta\|_{\rm F}^2
\quad \text{for all }\Delta\in T_Q\mathrm O(n).
\]
Then, there exists $R\in{\rm SP}(n)$ such that
\begin{equation}\label{eq:mainFrobBound}
\|\widehat Q-Q^\star R\|_{\rm F}
\ \le\ \frac{C_d}{\kappa}\,\Big( d\,\|\mu_d\|_{\rm F} \;+\; d\,\| \widehat\mu_d-\mu_d\|_{\rm F} \Big)\ \| \widehat\mu_d-\mu_d\|_{\rm F},
\end{equation}
where $C_d>0$ depends only on $d$.
In particular, with probability at least $1-e^{-\tau}$,
\[
\|\widehat Q-Q^\star R\|_{\rm F}
\ \lesssim_d\ \frac{n^{\frac{d-1}{2}}}{\kappa}\,\|\mu_d\|_{\rm F}\,
\left(
\sqrt{\frac{n}{N}} \;+\; \frac{n^{d/2}}{N}\;+\;\sqrt{\frac{\tau}{N}}\;+\;\frac{\tau^{d/2}}{N}
\right),
\]
where the implicit constant depends only on $d$ and on the sub-Gaussian parameter of $\x$.
\end{proposition}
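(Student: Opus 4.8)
The strategy is a standard ``perturbation of a minimizer under a curvature bound'' argument, combined with the concentration bound of Proposition~\ref{prop:mu4-ml}. First I would set up the comparison between $F_N$ and $F$. Since $F(Q)=\|\Pi_{V^\perp}(Q^\top\bullet\mu_d)\|_{\rm F}^2$ and $F_N(Q)=\|\Pi_{V^\perp}(Q^\top\bullet\widehat\mu_d)\|_{\rm F}^2$, and the map $Q\mapsto Q^\top\bullet(\cdot)$ is multilinear of degree $d$ in the entries of $Q$ (each entry of $Q$ appears to degree $\le d$), the operator $Q\mapsto Q^\top\bullet\mathcal E$ has operator norm $\lesssim_d \|\mathcal E\|_{\rm F}$ uniformly over $Q\in\rO(n)$, because $\|Q\|_2=1$. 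Hence, writing $\mathcal E:=\widehat\mu_d-\mu_d$,
\[
|F_N(Q)-F(Q)|\ \lesssim_d\ \big(\|\mu_d\|_{\rm F}+\|\mathcal E\|_{\rm F}\big)\,\|\mathcal E\|_{\rm F}
\]
uniformly in $Q$, and similarly the Riemannian gradients satisfy $\|\operatorname{grad}F_N(Q)-\operatorname{grad}F(Q)\|_{\rm F}\lesssim_d d\,(\|\mu_d\|_{\rm F}+\|\mathcal E\|_{\rm F})\,\|\mathcal E\|_{\rm F}$; the factor $d$ comes from differentiating the degree-$d$ multilinear map. I would carry both bounds with explicit $d$-dependence since the final statement tracks it.

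**Localizing to one component of the orbit.**
By Lemma~\ref{lem:pop-ident} the population minimizers of $F$ form the finite orbit $Q^\star{\rm SP}(n)$, and $F=0$ there. Pick $R\in{\rm SP}(n)$ minimizing $\|\widehat Q-Q^\star R\|_{\rm F}$ and write $Q_0:=Q^\star R$; since $\widehat Q\in\rO(n)$ is close to \emph{some} point of the compact orbit (consistency, Proposition~\ref{prop:consistency}, or a direct argument: $F(\widehat Q)\le |F(\widehat Q)-F_N(\widehat Q)|+F_N(\widehat Q)\le |F(\widehat Q)-F_N(\widehat Q)|+F_N(Q_0)$ is small, and $F$ vanishes only on the orbit, which is isolated among stationary sets by the curvature hypothesis), for $N$ large enough $\widehat Q$ lies in a geodesic ball around $Q_0$ on which the curvature condition $\langle\operatorname{Hess}F(Q)[\Delta],\Delta\rangle\ge\kappa\|\Delta\|_{\rm F}^2$ gives a quadratic lower bound $F(Q)\ge \tfrac{\kappa}{2}\operatorname{dist}(Q,\text{orbit})^2\gtrsim\kappa\|Q-Q_0\|_{\rm F}^2$ (shrinking constants to absorb the passage from Riemannian distance to Frobenius distance and from Hessian-at-$Q_0$ to Hessian nearby, using continuity of $\operatorname{Hess}F$).

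**The main inequality.**
Now chain: using $F(Q_0)=0$, the quadratic growth, optimality of $\widehat Q$ for $F_N$, and the uniform closeness of $F_N$ to $F$,
\[
\tfrac{\kappa}{2}\,\|\widehat Q-Q_0\|_{\rm F}^2\ \le\ F(\widehat Q)\ \le\ F_N(\widehat Q)+\delta\ \le\ F_N(Q_0)+\delta\ \le\ F(Q_0)+2\delta\ =\ 2\delta,
\]
where $\delta:=\sup_{Q}|F_N(Q)-F(Q)|\lesssim_d(\|\mu_d\|_{\rm F}+\|\mathcal E\|_{\rm F})\|\mathcal E\|_{\rm F}$. This gives $\|\widehat Q-Q_0\|_{\rm F}\lesssim_d \kappa^{-1/2}\delta^{1/2}$, which is a \emph{square-root} rate and too weak. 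To get the stated \emph{linear}-in-$\mathcal E$ bound one must instead use the first-order optimality of $\widehat Q$: $\operatorname{grad}F_N(\widehat Q)=0$, so $\|\operatorname{grad}F(\widehat Q)\|_{\rm F}=\|\operatorname{grad}F(\widehat Q)-\operatorname{grad}F_N(\widehat Q)\|_{\rm F}\lesssim_d d(\|\mu_d\|_{\rm F}+\|\mathcal E\|_{\rm F})\|\mathcal E\|_{\rm F}$. Then I would invoke the curvature hypothesis in its gradient form -- a $\kappa$-lower bound on the Hessian along the orbit implies, via a Taylor expansion of $\operatorname{grad}F$ transverse to the orbit (a Łojasiewicz-type / implicit-function estimate), that $\operatorname{dist}(Q,\text{orbit})\le\kappa^{-1}\|\operatorname{grad}F(Q)\|_{\rm F}$ for $Q$ near the orbit. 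Combining yields exactly
\[
\|\widehat Q-Q_0\|_{\rm F}\ \le\ \frac{C_d}{\kappa}\,\big(d\|\mu_d\|_{\rm F}+d\|\mathcal E\|_{\rm F}\big)\|\mathcal E\|_{\rm F},
\]
which is \eqref{eq:mainFrobBound}. The probabilistic tail then follows by substituting the bound of Proposition~\ref{prop:mu4-ml} on $\|\mathcal E\|_2$, upgraded to a Frobenius bound at the cost of $\|\mathcal E\|_{\rm F}\le n^{(d-1)/2}\|\mathcal E\|_2$ (a symmetric tensor in $S^d(\R^n)$ has $O(n^d)$ entries but the comparison constant between injective/Frobenius norms on $S^d$ is $\lesssim n^{(d-1)/2}$; I would cite or quickly verify this), and absorbing $\|\mathcal E\|_{\rm F}$ into the constant since it is $o(1)$.

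**Main obstacle.**
The delicate point is the step ``Hessian lower bound along the orbit $\Rightarrow$ $\operatorname{dist}(Q,\text{orbit})\le\kappa^{-1}\|\operatorname{grad}F(Q)\|_{\rm F}$ nearby.'' Because the minimizer set is a \emph{submanifold} (the orbit $Q^\star{\rm SP}(n)$ is zero-dimensional here, so this is actually just a finite set of nondegenerate critical points -- but I'd phrase it to be robust), one must split the tangent space at $Q_0$ into the direction of the orbit and its complement, note that the curvature hypothesis is stated for \emph{all} $\Delta\in T_Q\rO(n)$ and hence controls the transverse directions, and then run a quantitative inverse function theorem for $\operatorname{grad}F$ restricted to a transverse slice, with constants controlled on a neighborhood whose size depends on $\kappa$ and on bounds for $\operatorname{D}^3 F$ (which are $\lesssim_d\|\mu_d\|_{\rm F}$, again by multilinearity). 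Making the neighborhood uniform over the finitely many orbit points and checking that $\widehat Q$ indeed lands in it for $N$ large (so that the bound is eventually valid, as the ``in particular'' clause asserts) is the part requiring care; everything else is bookkeeping with the degree-$d$ multilinear estimates.
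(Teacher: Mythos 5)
Your proposal is correct and follows essentially the same route as the paper's proof: first-order optimality $\mathrm{grad}\,F_N(\widehat Q)=0$ combined with gradient/Hessian fluctuation bounds that are linear in $\|\widehat\mu_d-\mu_d\|_{\rm F}$ (via the degree-$d$ multilinearity and $\|Q\|_2=1$), the curvature hypothesis to convert the gradient discrepancy into a bound on the distance to the orbit, and then Proposition~\ref{prop:mu4-ml} together with $\|\T\|_{\rm F}\le n^{(d-1)/2}\|\T\|_2$ for the high-probability statement. The only difference is organizational: the paper Taylor-expands $\mathrm{grad}\,F_N$ at the nearest orbit point $Q_0$ (splitting score fluctuation, population curvature, and Hessian fluctuation, and exploiting $g(Q_0)=0$ there), whereas you evaluate $\mathrm{grad}\,F$ at $\widehat Q$ and package the curvature step as a local error-bound inequality near the orbit --- equivalent arguments with the same localization caveat, which you correctly flag.
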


\section{Local versus global optima under pairwise mean independence} \label{sec:local_optima}

We now focus on a misspecified scenario: we assume $V=V_{\rm diag}^{d,n}$, but the true tensor lies in the larger space, $V_{\pmi}^{d,n}$. When $V=V_{\rm diag}$, 
minimizing $\Pi_{V^\perp}\!\bigl(Q^{\top}\bullet h_d(\s)\bigr)$ over $Q\in\rO(n)$
as in \eqref{eq:FU} is equivalent to 
\begin{equation}\label{eq:equivmax}
\mbox{maximize }\sum_{i=1}^n \bigl\langle h_d(\s),\ q_i^{\otimes d}\bigr\rangle^2,\qquad Q\in\rO(n),    
\end{equation}
where $q_1,\ldots,q_n$ are the columns of $Q$.
In the true ICA setting $h_d(\s)\in V_{\rm diag}$, this is maximized at matrices $Q$ such that $Q^\top\tilde A\in\rSP(n)$, by Lemma~\ref{lem:pop-ident}.
In other words, the procedure recovers the correct mixing matrix $\tilde A$ up to signed permutation.

To simplify notation we work at the population level and, without loss of generality, set $\tilde A=I$; i.e., $\x=\s$.  
With this convention, the identity $I$ (and any signed permutation $P\in\rSP(n)$) is a ground-truth rotation: sources are exactly recovered when the rows of $Q$ are the basis vectors $e_i$ up to sign.
We decompose
\[
h_d(\s)=\kappa =\kappa_{\diag}+\kappa_{\off},\qquad
\kappa_{\diag}=\sum_{a=1}^n \lambda_a e_a^{\otimes d},\quad \lambda_a=\kappa_d(s_a),
\]
where $\kappa_{\off}$ collects the PMI off-diagonal entries. That is, $\kappa_{\diag}$ = $\Pi_{V_{\diag}}(\kappa)$ and $\kappa_{\off}$ = $\Pi_{V_{\diag}^{\perp}}(\kappa)$
For $Q\in\rO(n)$ we write
\[
\Phi_{\diag}(Q):=\sum_{i=1}^n \bigl\langle \kappa_{\diag},\,q_i^{\otimes d}\bigr\rangle^2,
\]
so that the objective in \eqref{eq:equivmax} equals $\Phi_{\diag}(Q)$ when $\kappa_{\off}=0$.

We show that $I$ is a stationary point of \eqref{eq:equivmax}.
We write $Q=I+H$ with $H$ an infinitesimal perturbation in the tangent space to $\rO(n)$ at $I$ ($H+H^\top=0$, and hence $H$ has zeros on the diagonal). Then
\begin{align*}
((I+H)\bullet \kappa)_{i\cdots i}
&=\kappa_{i\cdots i}+\sum_{\ell\neq i}H_{i\ell}\kappa_{\ell i\cdots i}+\cdots+\sum_{\ell\neq i}H_{i\ell}\kappa_{i\cdots i\ell}+o(\|H\|_{\mathrm F}).
\end{align*}
If $\kappa\in V_{\pmi}$, then all terms with exactly one index $\ell\neq i$ vanish, so
\[
((I+H)\bullet \kappa)_{i\cdots i}^2-(I\bullet \kappa)_{i\cdots i}^2=o(\|H\|_{\mathrm F}).
\]
Hence $I$ (and every $P\in\rSP(n)$) is a stationary point of $F(Q)$ under PMI.  

Our simulations in Section~\ref{sec:numerical} show that this first order analysis can be misleading:  
as soon as $\|\kappa_{\off}\|_{\mathrm F}$ is moderately large, the global maximizer of $F$ may move away from $\rSP(n)$, and the landscape can develop spurious local maxima and saddle points.  

We now consider local versus global behavior for two random variables.
To make our first order analysis more explicit,
consider $n=2$ and $Q=R(\theta)$.  
Since $F$ is invariant under signed permutations, it suffices to study
\[
R(\theta)=\begin{pmatrix}\cos\theta&-\sin\theta\\ \sin\theta&\cos\theta\end{pmatrix}.
\]
Write $\kappa_{\diag}=\lambda_1 e_1^{\otimes d}+\lambda_2 e_2^{\otimes d}$ and denote
\[
\Phi_{\diag}(\theta)=\sum_{i=1}^2 \bigl\langle \kappa_{\diag}, q_i^{\otimes d}\bigr\rangle^2.
\]

When $K=K_{\diag}$ (no PMI terms), one computes
\begin{align*}
d=3:&\quad \Phi_{\diag}(\theta)=(\lambda_1^2+\lambda_2^2)\bigl(1-3\sin^2\theta\cos^2\theta\bigr),\\
d=4:&\quad \Phi_{\diag}(\theta)=(\lambda_1^2+\lambda_2^2)(1-4u+2u^2)+4t_1t_2 u^2,\qquad u=\sin^2\theta\cos^2\theta.
\end{align*}
In both cases $\Phi_{\diag}$ is globally maximized at $\theta=0,\pi/2$, i.e.\ at $\rSP(2)$.  
The drop away from $\rSP(2)$ is quadratic in $\theta$:
\begin{align}
 d=3:\quad & \Phi_{\diag}(0)-\Phi_{\diag}(\theta)=\tfrac{3}{4}(\lambda_1^2+\lambda_2^2)\sin^2(2\theta), \\
 d=4:\quad & \Phi_{\diag}(0)-\Phi_{\diag}(\theta) \ge \tfrac12\,\mathrm{gap}_4^{\,2}\sin^2(2\theta), \label{eq:gap}
\end{align}
where $\mathrm{gap}_4=|\lambda_1-\lambda_2|$.
For arbitrary $K_{\off}$, Cauchy–Schwarz gives
\begin{equation}\label{eq:kappa-off}
    F(R(\theta))\;\le\;\Phi_{\diag}(\theta)
+2\|\kappa_{\off}\|_{\mathrm F}\sum_{i=1}^2\bigl|\langle \kappa_{\diag},q_i^{\otimes d}\rangle\bigr|
+2\|\kappa_{\off}\|_{\mathrm F}^2.
\end{equation}
At $\theta=0$ (any $P\in\rSP(2)$), PMI zeros imply $F=\Phi_{\diag}(0)=\lambda_1^2+\lambda_2^2$.  
For $\theta\neq 0$, the diagonal drop is quadratic in $\theta$, while the perturbation from $\kappa_{\off}$ is linear.  
Thus for small $\theta$, $\rSP(2)$ are strict local maxima, but the global maximizer may drift away if $\|\kappa_{\off}\|_{\mathrm F}$ is not negligible.

This calculation illustrates the local versus global behavior under PMI.  
The ground-truth rotations $\rSP(n)$ are locally stable: infinitesimal deviations reduce the objective.  
But the global landscape can change once off-diagonal PMI terms are present, creating new optima far from $\rSP(n)$.  
This foreshadows the phenomena observed in our simulations.

\section{Numerical experiments} \label{sec:numerical}

In this section, we test the Riemannian gradient descent algorithm for PMICA introduced in \Cref{alg:cap} and compare to ICA approaches.
Consider the PMICA setup $\x = A 
\s$ with $A \in \rO(n)$, where we assume that the data comes prewhitened. We estimate $A$ by minimizing 
$F_N(Q) = \|\Pi_{V_{\pmi}^\perp}(Q^\top \bullet \widehat{\kappa}_4(\x))\|_{\rm F}^2$ over $\rO(n)$
using RGD. We call the approach \texttt{RGD-PMICA}.

We compare to ICA, where we minimize
$F_N^{\diag}(Q) = \|\Pi_{V_{\diag}^\perp}(Q^\top \bullet \widehat{\kappa}_4(\x))\|_{\rm F}^2$ over $\rO(n)$. We call this approach \texttt{RGD-ICA}
when we minimize using RGD. We compare these two RGD approaches with classical ICA baselines \texttt{FastICA} \cite{hyvarinen2000independent} and \texttt{JADE}  \cite{cardoso1993blind}. \texttt{FastICA} is a fixed-point algorithm that estimates independent components by maximizing non-Gaussianity, where orthogonality is maintained by whitening and a symmetric orthoognalization step at each iteration. \texttt{JADE} is a cumulant-based method that jointly diagonalizes fourth-order cumulant matrices via Jacobi (Givens) rotations, which enforce orthogonality by construction. As such, \texttt{JADE} is similar to \texttt{RGD-ICA} when $n=2$.

Having estimated a mixing matrix $\widehat{A} \in \rO(n)$, the ratio $\|\Pi_{V_{\diag}^\perp}({\widehat{A}}^\top \bullet \widehat{\kappa}_4(\x))\|_{\rm F}/\|\widehat{\kappa}_4(\x)\|_{\rm F}$ measures the goodness of fit of ICA, and $\|\Pi_{V_{\pmi}^\perp}({\widehat{A}}^\top \bullet \widehat{\kappa}_4(\x))\|_{\rm F}/\|\widehat{\kappa}_4(\x)\|_{\rm F}$ measures the goodness of fit of PMICA. We call these metrics ``Distance to Independent'' and ``Distance to PMI'', respectively. The denominator $\|\widehat{\kappa}_4(\x)\|_{\rm F}$ normalizes these metrics, ensuring that their values lie between zero and one.

\subsection{Synthetic data} 

Let $\z^{(0)}$, and $\z^{(1)}$ be two-dimensional random vectors independent from each other with density functions
\[
f_{\z^{(0)}}(z_1, z_2) = \frac{1}{2}|z_2| \mathbbm{1}_{[-1, 1]^2}(z_1, z_2), \quad \quad f_{\z^{(1)}}(z_1, z_2) = \frac{3}{2}|z_2| \mathbbm{1}_{B_1}(z_1, z_2),
\]
where $B_1 = \{(z_1, z_2) \in \R^2 \mid |z_1| + |z_2| \leq 1\}$ is the unit $\ell_1$ ball and $\mathbbm{1}_U$ is the indicator function for the set $U$. The vector $\z^{(0)}$ has independent entries and the entries of $\z^{(1)}$ are pairwise mean independent, which follows by direct calculations.
For each $\alpha \in [0,1]$, let $\z^{(\alpha)} = (1-\alpha)\z^{(0)} + \alpha\z^{(1)}$ and let $\s^{(\alpha)}$ be the random vector obtained by rescaling each coordinate of $\z^{(\alpha)}$ to have unit variance. See \Cref{fig:samples}.

\begin{figure}[ht]
    \centering
    \includegraphics[width=1\linewidth]{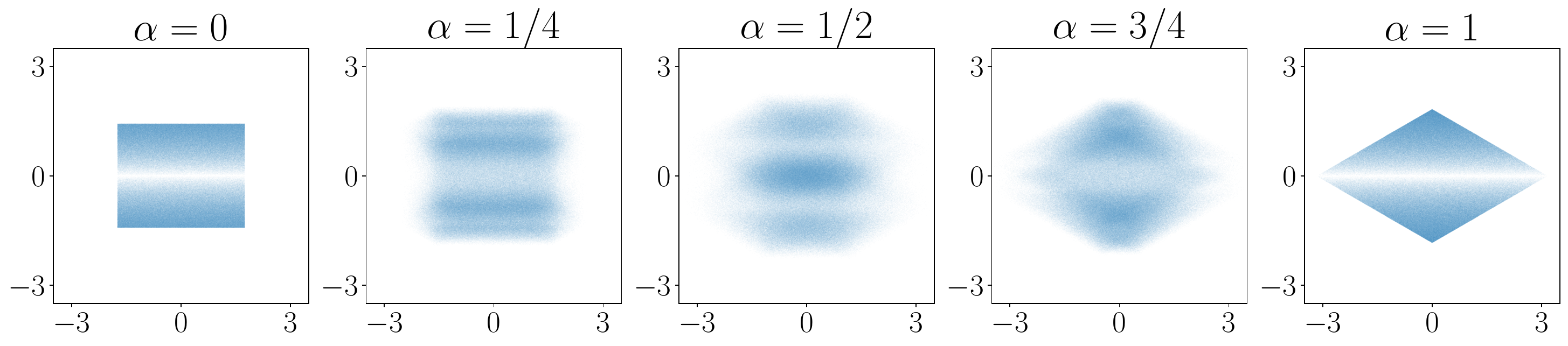}
    \caption{One million samples    from $\s^{(\alpha)}$ for different $\alpha$.  It is pairwise mean independent for all $\alpha$, because linear combinations of independent PMI vectors are PMI.  It is independent when $\alpha = 0$ but not otherwise.}
    \label{fig:samples}
\end{figure}

Given $A \in \rO(2)$ and $\alpha \in [0,1]$, consider the PMICA model $\x^{(\alpha)} = A \s^{(\alpha)}$.
We investigate the performance of the different algorithms in recovering $A$ given $N = 10^6$ samples from $\x^{(\alpha)}$,
see \Cref{fig:performance-ICA-synthetic}. In all plots in this section, each point is the median over 100 experiments. The fourth-order cumulant tensor $\kappa_4(\s^{(\alpha)}) \in S^4(\R^2)$ is generic in $V_{\pmi}$, in the sense of \Cref{thm:main-symmetric}, for all $\alpha \in [0,1]$. Hence, an algorithm recovers the true mixing matrix $A$ (up to signed permutation) if and only if the distance to PMI of $\widehat{A}^\top \bullet \widehat{\kappa}_4(\x)$ is zero.
\Cref{fig:performance-ICA-synthetic} shows that \texttt{RGD-PMICA} outperforms the ICA methods in recovering PMI distributions: ICA methods only recover the true sources when their distributions are close to independent (small $\alpha$).

\begin{figure}[ht]
    \centering
    \includegraphics[width=1\linewidth]{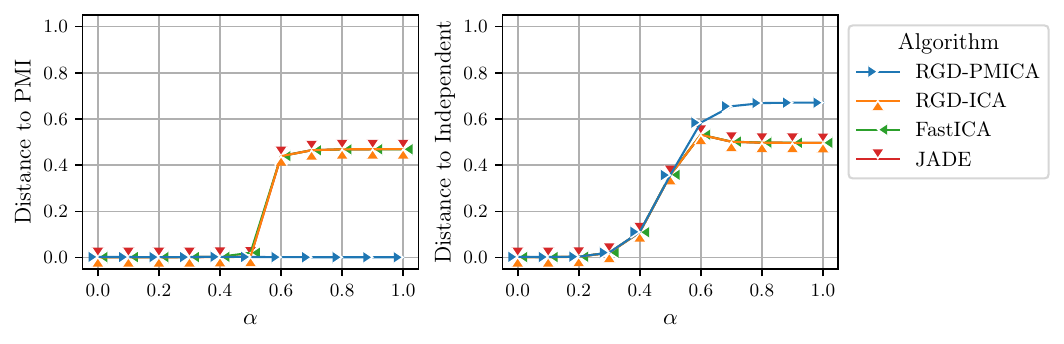}
    \caption{\texttt{RGD-PMICA} outperforms ICA algorithms in recovering $\s^{(\alpha)}$.    The ICA methods find the closest independent distribution, which is not the PMI one for $\alpha \geq 0.6$.
    }
    \label{fig:performance-ICA-synthetic}
\end{figure}

In \Cref{sec:local_optima}, we quantified what close to independent means for ICA to recover the true rotation: $\mathrm{gap}_4 (\widehat{\kappa}_4(\s)) = |\widehat{\kappa}_4(\s)_{1111} - \widehat{\kappa}_4(\s)_{2222}|$ has to be a significant amount larger than the off-diagonal Frobenius norm $\|\widehat{\kappa}_4(\s)_{\text{off}}\|_{\rm F} = \|\Pi_{V_{\diag}^\perp}(\widehat{\kappa}_4(\s))\|_{\rm F}$.  \Cref{fig:gap4} shows these quantities for different~$\s^{(\alpha)}$. When $\alpha = 0.6$, the ICA methods do not recover the true PMI source (as seen in \Cref{fig:performance-ICA-synthetic}), and the ratio between $\mathrm{gap}_4$ and $\|\widehat{\kappa}_4(\s)_{\off}\|$ is 1.07, c.f. \eqref{eq:gap} and~\eqref{eq:kappa-off}.

\begin{figure}[ht]
    \centering
    \includegraphics{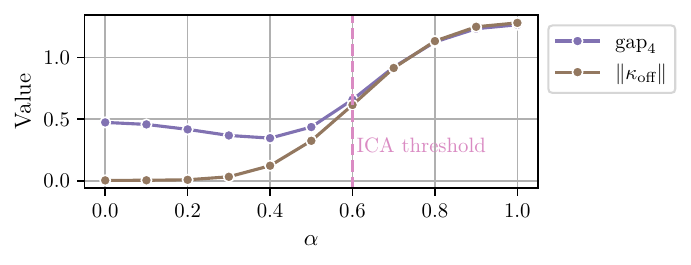}
    \caption{Gap ($|\widehat{\kappa}_{1111} - \widehat{\kappa}_{2222}|$) and off-diagonal Frobenius norm for $\widehat{\kappa}_4(\s^{(\alpha)})$. ICA models recover the PMI sources when $\alpha$ is smaller than the ICA threshold.}
    \label{fig:gap4}
\end{figure}

Now we study the performance of such algorithms in recovering PMI distributions for different dimensions.
Consider the distribution with density
\[
f(z_1, \dots, z_n; \alpha) = \frac{1}{C(\alpha)} \prod_{i=1}^n |z_i|^{\alpha_i - 1} \mathbbm{1}_{B_1^n} (z_1, \dots, z_n),
\]
where $C(\alpha)$ is a normalizing constant and $B_1^n = \{ y \in \R^n \mid \| y\|_1 \leq 1\}$.
This is closely related to the Dirichlet distribution. Let $\s$ be an $n$-dimensional random vector obtained from this distribution after rescaling each coordinate of $\z$ so that they have unit variance. Note that $\s$ is PMI for all $n$.
In the following, we use $\alpha_i = 2^{\frac{i-1}{n-1}}$. We choose these values of $\alpha_i$ to ensure that $\kappa_4(\s)$ is not close to $V_{\diag}$ and has sufficiently distinct diagonal entries. In particular, the fourth-order cumulant tensor $\kappa_4(\s)$ is generic in $V_{\pmi}$ for all $n$, in the sense of \Cref{thm:main-symmetric}. When $n=2$ this distribution coincides with the one defined above for $\s^{(1)}$. 

We evaluate the algorithms for different values of $n$ using a sample size of $10^6$. \Cref{fig:methods_vs_dimension} shows the best-case scenario for each method: the minimum after using each method with 25 random initializations. Again, \texttt{RGD-PMICA} outperforms the ICA algorithm in recovering PMI sources. In \Cref{fig:methods_vs_dimension}, the distance to independent is higher than the distance to PMI. This is expected since $V_{\diag} \subsetneq V_{\pmi}$.

\begin{figure}[h]
    \centering
    \includegraphics[width=\linewidth]{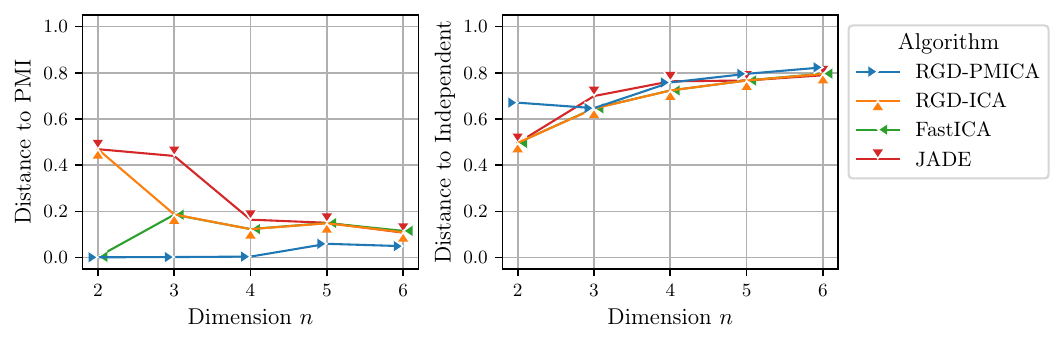}
    \caption{\texttt{RGD-PMICA} outperforms the ICA algorithms for all $n$. The decreasing trends of the ICA methods suggest that the closest independent distribution gets closer to the PMI one as $n$ increases.
    }
    \label{fig:methods_vs_dimension}
\end{figure}

In the remainder of this section, we focus on \texttt{RGD-PMICA}. \Cref{fig:methods_vs_dimension} shows that \texttt{RGD-PMICA} fails to recover the true mixing matrix when $n \geq 5$ with 25 different initializations. This is because the complexity of the optimization problem increases as $n$ grows. \Cref{fig:performance_trials} shows how the performance depends on the number of different random initializations. We observe that the number of initializations that \texttt{RGD-PMICA} requires to find the true PMI sources is of the order of $10^{n-2}$, where $n$ is the dimension.

\begin{figure}[ht]
    \centering
    \includegraphics{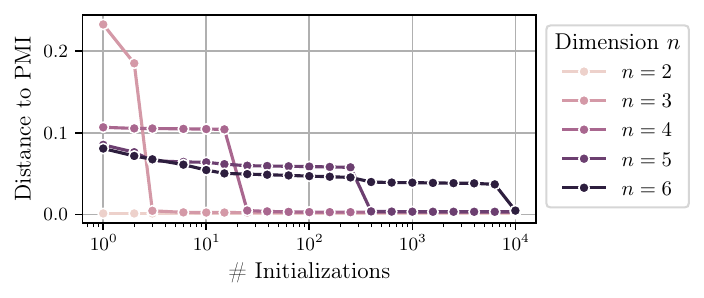}
    \caption{Convergence analysis of \texttt{RGD\_PMICA}. The complexity of the optimization problem grows exponentially with the dimension.}
    \label{fig:performance_trials}
\end{figure}

Finally, we analyze the sample complexity to recover PMI sources. \Cref{fig:sample_complexity} shows the standard deviation of $\widehat{\kappa}_4(\s)$ one million samples over 100 experiments, and the distance to PMI obtained with \texttt{RGD-PMICA} using $10^4$ initializations. We observe that the lines shared the same slope (provided the sample size is big enough). This slope is approximately $-1/2$ (in log-log scale), which agrees with the term $1/\sqrt{N}$ in \Cref{prop:mu4-ml}.

\begin{figure}[h]
    \centering
    \includegraphics[width=1\linewidth]{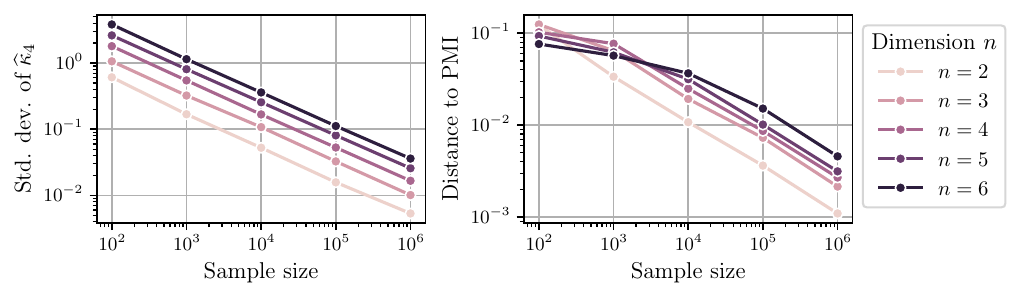}
    \caption{Sample complexity in different dimensions. Standard deviation of the estimator $\widehat{\kappa}_4(\mathbf{s})$ (left) and distance to PMI using \texttt{RGD-PMICA} (right). The lines corresponding to different dimensions share a common slope of $-1/2$.}
    \label{fig:sample_complexity}
\end{figure}

\subsection{Real data} 
In this section we test the performance of \texttt{RGD\_PMICA} on a real world application. The electroencephalogram (EEG) dataset \cite{chavarriaga2010learning} records the brain's electrical activity of six subjects over two sessions. We use the data indexed by S01-1 in the 22nd dataset available at \href{https://bnci-horizon-2020.eu/database/data-sets}{https://bnci-horizon-2020.eu/database/data-sets}, which consists of 91648 observations of 64 electrodes. First, we perform dimensionality reduction via PCA. We keep the top 5 principal components, which explain $93 \%$ variability of the data. This leads to a $91648 \times 5$ data matrix. After whitening the data, we test the performance of the different approaches mentioned above to recover independent/PMI sources. See \Cref{tab:accuracy_eeg}. \texttt{PCA} is the input data and \texttt{JADE} does not rely on any initialization. For each of the other optimization approaches and each metric, we report the best of 50 trials with different initial points in \Cref{tab:accuracy_eeg}.
\Cref{tab:accuracy_eeg} shows that \texttt{RGD-PMICA} outperforms the other methods in recovering PMI sources. It is interesting to observe that the distance to PMI achieved by \texttt{RGD-ICA} is approximately the one achieved by \texttt{RGD-PMICA}. This agrees with our discussion in \Cref{sec:local_optima}. However, the rotation found by \texttt{RGD-ICA} minimizing the distance to independent (0.55) leads to a distance to PMI of 0.18, i.e., it does not correspond to the one that minimizes the distance to PMI (0.10).

\begin{table}[h]
    \centering
    \begin{tabular}{l|c|c}
        Algorithm  & Distance to PMI & Distance to Independent\\
         \hline 
        \texttt{PCA} & 0.50 & 0.97 \\
        \texttt{RGD-PMICA} & $\mathbf{0.09}$ & $0.56$ \\
        \texttt{RGD-ICA} & $0.10$ & $0.55$ \\
        \texttt{FastICA} & $0.41$ & $0.69 $\\
        \texttt{JADE} & $0.34$ & $\mathbf{0.54}$\\
    \end{tabular}
    \caption{Goodness of fit of the PMICA and ICA models to EEG data} 
    \label{tab:accuracy_eeg}
\end{table}

A typical qualitative measurement to analyze the performance of ICA approaches in EEG data is the ability to retrieve a source corresponding to the eye-blink artifact. \Cref{fig:eeg_sources} shows that  \texttt{RGD-PMICA} succeeds in finding the eye-blink artifact.

\begin{figure}[h!]
    \centering
    \begin{subfigure}{1\linewidth}
        \includegraphics[width=1\linewidth]{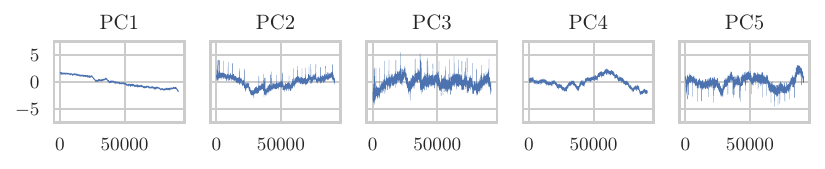}
        \caption{Projection onto principal components}
        \vspace*{2mm}
    \end{subfigure}
    \begin{subfigure}{1\linewidth}
        \includegraphics[width=1\linewidth]{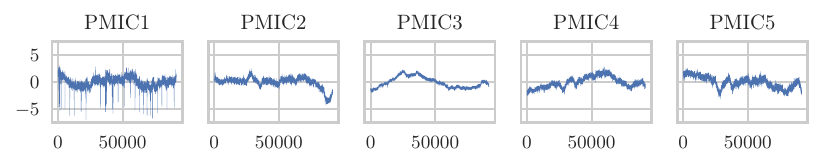}
        \caption{Projection onto pairwise mean independent components}
    \end{subfigure}
    \caption{
    \texttt{RGD-PMICA} finds the eye-blink artifact (PMIC1)     }
    \label{fig:eeg_sources}
\end{figure}

\section*{Discussion}

We established the identifiability of pairwise mean component analysis (PMICA), generalizing classical ICA by relaxing the independence assumption. It is more expressive than ICA, as it corresponds to a higher-dimensional family of cumulant tensors. Our results subsume previous examples of non-independent components analysis \cite{mesters2022non} and cannot be further relaxed: if we drop one mean independence assumption on the source variables, the model becomes unidentifiable.
We conclude by outlining directions for future work.

It is an open problem to determine the genericity conditions under which a symmetric tensor with an orthogonal basis of eigenvectors has a unique such basis. Our results in Section~\ref{sec:proof-uniqueness-symmetric} address symmetric tensors of order up to nine.

We focused on genericity conditions for PMICA, fixing the order of the cumulant tensor. It is an open question to characterize distributional assumptions for genericity when all cumulants are available---the analog of the `at most one Gaussian source' condition in ICA.
It is also an open problem to extend to the case where $\x$ and $\s$ have different dimensions, following the study of full independence in the overcomplete setting~\cite{eriksson2006complex,wang2024identifiability}.

Our concentration and parameter error bounds are universal in that they apply for any choice of $V$. We save a more refined analysis, particularly of improved moment and cumulant estimators for non-sub-Gaussian data, for future work.

\section*{Acknowledgments}
We thank Stanislav Volgushev and Zhekai Pang for helpful discussions. AR was supported by fellowships from ``la
Caixa'' Foundation (ID 100010434), with fellowship code LCF/BQ/EU23/12010097, and from RCCHU.
AS was supported by an Alfred P. Sloan research fellowship.
PZ is also affiliated with the Department of Statistical Sciences at the University of Toronto and he was
supported by NSERC grant RGPIN-2023-03481.  

\bibliographystyle{alpha}
\bibliography{biblio}

\markboth{APPENDIX}{APPENDIX}
\appendix

\section{Technical material from  Section~\ref{sec:finitebounds}}\label{sec:appendix-finite-bounds}

\subsection{The Gradient and Hessian of $F, F_N$}

The functions $F$ and $F_N$ are defined in \eqref{eq:FU} and \eqref{eq:Ustar}. The gradient $\D F(Q)$ of $F$ at $Q$ is the  linear functional $\nabla F(Q)$: $\D F(Q)[\Delta]=\<\nabla F(Q),\Delta\>$ on $\R^{n\times n}$. Let ${\rm grad} F$ denote its projection to  the tangent space to $\rO(n)$ at $Q$
\[
T_Q\mathrm O(n)\;=\;\{\Delta\in\R^{n\times n}: Q^\top\Delta+\Delta^\top Q=0\}
=\{QA:\ A^\top=-A\}.
\]
Fixing $Q$, we denote $U:=T_Q\rO(n)$. The Riemannian gradient of $F$ at $Q$ is
$$
{\rm grad} F(Q)\;:=\;\Pi_U(\nabla F(Q)).
$$
Since, for any $\Delta\in \R^{n\times n}$
$$
\<{\rm grad}F(Q),\Delta\>\;=\;\<{\rm grad}F(Q),\Pi_U(\Delta)\>\;=\;\<\nabla F(Q),\Pi_U(\Delta)\>
$$
this restricts $\D F(Q)$ to $\Delta\in U$; see \cite[Chapter~3]{Absil2008}.
We express the gradients of $F,F_N$ in terms of $g,g_N$.
\begin{lemma}\label{lem:A1}
    Let $Q\in \rO(n)$ and let $U=T_Q\rO(n)$ then $${\rm grad} F(Q)=2\Pi_U\Big((\D g(Q))^\ast[g(Q)]\Big)\quad\mbox{and}\quad{\rm grad} F_N(Q)=2\Pi_U\Big((\D g_N(Q))^\ast[g_N(Q)]\Big), $$
    where for a linear mapping $G$ on $\R^{n\times n}$, $G^*$ denotes its conjugate mapping.
\end{lemma}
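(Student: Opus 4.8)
The plan is to reduce the statement to two standard ingredients: the chain rule for the Euclidean differential of a squared-norm composition, and the characterization of the Riemannian gradient on the embedded submanifold $\rO(n)$ as the orthogonal projection of the Euclidean gradient onto the tangent space $U=T_Q\rO(n)$ (recalled above, see \cite[Chapter~3]{Absil2008}). No structural hypothesis on $V$, and no genericity, enters the argument; it is pure differential calculus.

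First I would write $F(Q)=\langle g(Q),g(Q)\rangle$ with the Frobenius inner product on $S^d(\R^n)$ (recall $g(Q)=\Pi_{V^\perp}(Q^\top\bullet h_d(\x_w))$ is defined for every $Q\in\R^{n\times n}$, since $F$ was extended off $\rO(n)$). Treating $\D g(Q)\colon\R^{n\times n}\to S^d(\R^n)$ abstractly as a linear map — its explicit form as the sum of $d$ terms obtained by replacing one copy of $Q$ by $\Delta$ in $Q^\top\bullet(\cdot)$, followed by $\Pi_{V^\perp}$, is not needed here — the product rule gives, for every $\Delta\in\R^{n\times n}$,
\[
\D F(Q)[\Delta]\;=\;2\,\langle g(Q),\,\D g(Q)[\Delta]\rangle\;=\;2\,\langle (\D g(Q))^\ast[g(Q)],\,\Delta\rangle,
\]
where $(\D g(Q))^\ast$ is the adjoint of $\D g(Q)$ with respect to the Frobenius inner products on domain and codomain; the second equality is just the definition of the adjoint. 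This identifies the Euclidean gradient as $\nabla F(Q)=2\,(\D g(Q))^\ast[g(Q)]$.

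Next I would apply the projection formula for embedded submanifolds: for $Q\in\rO(n)$ and $U=T_Q\rO(n)$, one has $\langle\operatorname{grad}F(Q),\Delta\rangle=\langle\nabla F(Q),\Pi_U(\Delta)\rangle=\langle\Pi_U(\nabla F(Q)),\Delta\rangle$ for all $\Delta$, hence $\operatorname{grad}F(Q)=\Pi_U(\nabla F(Q))=2\,\Pi_U\big((\D g(Q))^\ast[g(Q)]\big)$, which is the first claimed identity. The second identity is obtained by repeating the computation verbatim with $F_N(Q)=\langle g_N(Q),g_N(Q)\rangle$ in place of $F$, since $g_N$ has exactly the same functional form as $g$ (with $\widehat h_d(\x_w)$ replacing $h_d(\x_w)$).

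There is no genuine obstacle here; the only point requiring minor care is the bookkeeping of inner-product spaces — $\D g(Q)$ sends $\R^{n\times n}$ into $S^d(\R^n)$ (equivalently into $V^\perp$), so its adjoint goes the other way — but once both spaces are equipped with their Frobenius inner products the adjoint identity used above is immediate, and the orthogonal projector $\Pi_{V^\perp}$ implicit in $g$ is already absorbed into $(\D g(Q))^\ast$.
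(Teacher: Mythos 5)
Your proposal is correct and follows essentially the same route as the paper: compute the Euclidean gradient $\nabla F(Q)=2\,(\D g(Q))^\ast[g(Q)]$ via the product rule and the definition of the adjoint (the paper does this through the first-order expansion of $g$, which is the same computation), then obtain the Riemannian gradient as the orthogonal projection onto $T_Q\rO(n)$, with the case of $F_N$ handled verbatim. No gaps; the projection step you rederive is already built into the paper's definition of ${\rm grad}\,F$, so your argument matches the intended proof.
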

\begin{proof}
From the definition of the derivative $\D g(Q)$ of $g$ at $Q$, we get
$$g(Q+\Delta)-g(Q)\;=\;\D g(Q)[\Delta]+o(\|\Delta\|).$$ 
It follows that 
$$
F(Q+\Delta)-F(Q)\;=\;2\<g(Q),\D g(Q)[\Delta]\>+o(\|\Delta\|)\;=\;2\<(\D g(Q))^\ast[g(Q)],\Delta\>+o(\|\Delta\|).
$$
 Hence the derivative $\D F(Q)$ is represented by the gradient $\nabla F(Q)$ which satisfies
\(
\nabla F(Q)\ =\ 2\, (\D g(Q))^\ast[g(Q)]\;\in \;\R^{n\times n}\).
The Riemannian gradient is its orthogonal projection to $U$. The argument for $F_N$ is analogous.
\end{proof}

\begin{lemma}\label{lem:A3}
For any $Q\in\rO(n)$,
\[
\|\D g_N(Q)-\D g(Q)\| \ \le\ d\,\|\widehat\mu_d-\mu_d\|_{\rm F},
\qquad
\|\D g(Q)\|\ \le\ d\,\|\mu_d\|_{\rm F},
\]
where $\|\cdot\|$ is the operator norm induced by matrix $\|\cdot\|_{\rm F}$ and tensor $\|\cdot\|_{\rm F}$.
\end{lemma}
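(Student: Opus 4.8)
Recall that throughout this section $h_d(\x_w)=\mu_d$, so that $g(Q)=\Pi_{V^\perp}\!\big(Q^\top\bullet\mu_d\big)$ and $g_N(Q)=\Pi_{V^\perp}\!\big(Q^\top\bullet\widehat\mu_d\big)$, both of which extend to polynomial maps of degree $d$ in the entries of $Q$ on all of $\R^{n\times n}$. The plan is to differentiate these maps explicitly and then estimate the resulting expressions by elementary norm inequalities. First I would introduce the multilinear action $(B_1,\dots,B_d)\bullet\mathcal T$ with entries $\sum_{j_1,\dots,j_d}(B_1)_{i_1j_1}\cdots(B_d)_{i_dj_d}\mathcal T_{j_1\dots j_d}$, so that $Q^\top\bullet\mathcal T=(Q^\top,\dots,Q^\top)\bullet\mathcal T$. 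The product rule together with linearity of $\Pi_{V^\perp}$ then gives, for $\Delta\in\R^{n\times n}$,
\[
\D g(Q)[\Delta]=\Pi_{V^\perp}\Big(\sum_{k=1}^d\ (Q^\top,\dots,\Delta^\top,\dots,Q^\top)\bullet\mu_d\Big),
\]
with $\Delta^\top$ inserted in the $k$-th slot, and the identical formula with $\mu_d$ replaced by $\widehat\mu_d-\mu_d$ represents $\D g_N(Q)-\D g(Q)$, because $g_N-g=\Pi_{V^\perp}\!\big(Q^\top\bullet(\widehat\mu_d-\mu_d)\big)$.

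Next I would record the submultiplicativity of the multilinear action in Frobenius norm: for matrices $B_1,\dots,B_d$ and a tensor $\mathcal T$,
\[
\|(B_1,\dots,B_d)\bullet\mathcal T\|_{\rm F}\ \le\ \|B_1\|_2\cdots\|B_d\|_2\,\|\mathcal T\|_{\rm F},
\]
where $\|\cdot\|_2$ is the spectral norm. This follows by applying the matrices one mode at a time: writing $\mathcal T\times_k B_k$ for multiplication of $\mathcal T$ by $B_k$ along slot $k$, the mode-$k$ unfolding of $\mathcal T\times_k B_k$ equals $B_k$ times that of $\mathcal T$, hence $\|\mathcal T\times_k B_k\|_{\rm F}\le\|B_k\|_2\|\mathcal T\|_{\rm F}$; composing over $k=1,\dots,d$ gives the claim.

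The bounds now follow by substitution. Since $Q\in\rO(n)$, $\|Q^\top\|_2=1$; also $\|\Delta^\top\|_2=\|\Delta\|_2\le\|\Delta\|_{\rm F}$; and $\Pi_{V^\perp}$, being an orthogonal projection, does not increase $\|\cdot\|_{\rm F}$. Therefore each of the $d$ summands in the displayed formula for $\D g(Q)[\Delta]$ has Frobenius norm at most $\|\Delta\|_{\rm F}\|\mu_d\|_{\rm F}$, so $\|\D g(Q)[\Delta]\|_{\rm F}\le d\,\|\Delta\|_{\rm F}\|\mu_d\|_{\rm F}$; taking the supremum over $\|\Delta\|_{\rm F}\le 1$ gives $\|\D g(Q)\|\le d\,\|\mu_d\|_{\rm F}$. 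Running the same estimate on the formula for $\D g_N(Q)-\D g(Q)$ yields $\|\D g_N(Q)-\D g(Q)\|\le d\,\|\widehat\mu_d-\mu_d\|_{\rm F}$.

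There is no substantive obstacle in this lemma: it is a one-line differentiation followed by routine norm bookkeeping. The only points requiring care are keeping the spectral norm (not the Frobenius norm) inside the submultiplicativity step, using $\|\Delta\|_2\le\|\Delta\|_{\rm F}$ and the contractivity of the projection so that the constant comes out exactly $d$, and noting that $g$ and $g_N$ genuinely extend to smooth maps on $\R^{n\times n}$ so that the ordinary derivative $\D g(Q)$ is well defined (this is already part of the setup, since $F$ was extended to $\R^{n\times n}$).
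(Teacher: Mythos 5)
Your proof is correct and follows essentially the same route as the paper: differentiate the multilinear map to get a sum of $d$ terms with $\Delta^\top$ in one slot, bound each term by $\|\Delta\|_{\rm F}\,\|\T\|_{\rm F}$ using orthogonality of $Q$ and submultiplicativity of the multilinear action, and use that $\Pi_{V^\perp}$ is non-expansive. The only difference is that you spell out the mode-wise unfolding argument for the spectral-norm bound, which the paper leaves implicit under ``by multilinearity and $\|Q\|=1$.''
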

\begin{proof}
Let $\T:=\widehat\mu_d-\mu_d$ and $H(Q):=Q^\top\bullet \T$ so that
$\D g_N(Q)-\D g(Q)=\Pi_{V^\perp}(\D H(Q))$.
For any $\Delta\in\R^{n\times n}$,
\[
\D H(Q)[\Delta]
=\sum_{j=1}^d
\bigl(Q^\top\bigr)^{\otimes(j-1)}\otimes \Delta^\top \otimes \bigl(Q^\top\bigr)^{\otimes(d-j)}\ \bullet\ \T.
\]
By multilinearity and $\|Q\|=1$,
\[
\|\D H(Q)[\Delta]\|_{\rm F}
\ \le\ \sum_{j=1}^d \|\Delta\|\,\|\T\|_{\rm F}
\ \le\ d\,\|\Delta\|_{\rm F}\,\|\T\|_{\rm F}.
\]
Thus $\|\D H(Q)\|\le d\,\|\T\|_{\rm F}$. Since $\Pi_{V^\perp}$ is an orthogonal projection,
$\|\Pi_{V^\perp}\|=1$, hence
$\|\D g_N(Q)-\D g(Q)\|\le d\,\|\T\|_{\rm F}$.
The bound for $\D g(Q)$ follows by the same argument with $\T$ replaced by $\mu_d$.
\end{proof}

\subsection{The spectral norm of a tensor}

Let $\T\in S^d(\R^n)$. We collect useful results on the
spectral norm
\[
\|\T\|_2=\max_{\|u\|=1} \<\T,u^{\otimes d}\>,
\]
defined in \eqref{eq:spnorm}.  One can view $T$ as a multilinear form and define 
\[
\|\T\|_{2, \dots, 2} \;:=\; \max_{\| u^{(1)}\|=\cdots =\|u^{(d)}\|=1}
\<\T, u^{(1)}\otimes\cdots\otimes u^{(d)}\>.
\]
The norm
$\|\T\|_{2,\dots,2}$ (resp. $\|\T\|_2$) characterizes the best
rank-one approximation (resp. symmetric approximation) of $\T$.
For general tensors the two norms differ, but for symmetric tensors the best rank-one approximation may be
chosen symmetric~\cite[Theorem~1]{friedland2013best}. Hence  
\begin{equation}\label{eq:tensbound}
\|\T\|_{2,\ldots,2}\;=\;\|\T\|_2
\end{equation}
for all $\T\in S^d(\R^n)$.
The next lemma compares the Frobenius norm
and the spectral norm.

\begin{lemma}\label{lem:frobinml}
   For any $\T\in S^d(\R^n)$, we have 
   \(
   \|\T\|_{\rm F}\;\leq\;n^{(d-1)/2}\,\|\T\|_2\).
\end{lemma}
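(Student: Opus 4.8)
The plan is to prove the inequality by induction on the order $d$, peeling off one mode at a time. For the base case $d=1$ the object $\T$ is a vector and $\|\T\|_{\rm F}=\|\T\|_2$ (both equal the Euclidean length of $\T$), so the inequality holds with equality since $n^{(d-1)/2}=n^0=1$.

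For the inductive step, fix $d\ge2$ and write $\T^{(k)}:=\T(e_k,\cdot,\dots,\cdot)\in S^{d-1}(\R^n)$ for the slice of $\T$ along the first mode at the $k$-th standard basis vector $e_k$; this is again a symmetric $(d-1)$-tensor because $\T$ is symmetric. Since the Frobenius norm is the $\ell^2$-norm of the entries, $\|\T\|_{\rm F}^2=\sum_{k=1}^n\|\T^{(k)}\|_{\rm F}^2$, and the inductive hypothesis gives $\|\T^{(k)}\|_{\rm F}^2\le n^{d-2}\,\|\T^{(k)}\|_2^2$ for each $k$. Hence it suffices to prove $\|\T^{(k)}\|_2\le\|\T\|_2$ for all $k$, for then $\|\T\|_{\rm F}^2\le\sum_{k=1}^n n^{d-2}\|\T\|_2^2=n^{d-1}\|\T\|_2^2$, which is the claim.

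For the remaining bound I would use \eqref{eq:tensbound} twice: applying it to the symmetric tensor $\T^{(k)}$ and then to $\T$,
\[
\|\T^{(k)}\|_2=\|\T^{(k)}\|_{2,\dots,2}=\max\ \bigl\langle\T,\,e_k\otimes v^{(2)}\otimes\cdots\otimes v^{(d)}\bigr\rangle\ \le\ \max\ \bigl\langle\T,\,v^{(1)}\otimes\cdots\otimes v^{(d)}\bigr\rangle=\|\T\|_{2,\dots,2}=\|\T\|_2,
\]
where both maxima run over unit vectors $v^{(i)}\in\R^n$ and the inequality holds because $e_k$ is itself a unit vector, hence an admissible choice for $v^{(1)}$. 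This closes the induction. I do not anticipate a real obstacle; the only delicate point is not to conflate the symmetric spectral norm $\|\cdot\|_2$ with the general multilinear norm $\|\cdot\|_{2,\dots,2}$, which is precisely the role of \eqref{eq:tensbound}. As a sanity check on the exponent, bounding entries crudely by $|\T_{i_1\cdots i_d}|=|\langle\T,e_{i_1}\otimes\cdots\otimes e_{i_d}\rangle|\le\|\T\|_{2,\dots,2}$ already yields $\|\T\|_{\rm F}\le n^{d/2}\|\T\|_2$, and the slicing induction is exactly what sharpens $n^{d/2}$ to $n^{(d-1)/2}$.
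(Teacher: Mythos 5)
Your proof is correct, and it takes a genuinely different route from the paper. The paper disposes of the lemma by citation: it invokes \cite[Theorem~4.3]{qi2011best} for $d=2,3$ and then notes that Qi's Conjecture~1 (which is exactly \eqref{eq:tensbound}, settled by \cite[Theorem~1]{friedland2013best}) together with \cite[Theorem~3.1]{qi2011best} yields the bound for all $d$. You instead give a short self-contained induction on $d$: slice along the first mode, use $\|\T\|_{\rm F}^2=\sum_k\|\T^{(k)}\|_{\rm F}^2$, observe that each slice $\T^{(k)}=\T(e_k,\cdot,\dots,\cdot)$ is again symmetric, and control $\|\T^{(k)}\|_2\le\|\T\|_2$ by passing through the multilinear norm via \eqref{eq:tensbound} applied to both $\T^{(k)}$ and $\T$. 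All steps check out, including the exponent bookkeeping ($n\cdot n^{d-2}=n^{d-1}$), and the only external ingredient you need is Friedland's equality \eqref{eq:tensbound}, which the paper relies on anyway; what you avoid is the detour through Qi's Theorem~3.1 and the conjecture discussion, so your argument is more transparent and self-contained, at the cost of a page of explicit induction where the paper has two sentences of references. One cosmetic caveat, inherited from the paper rather than introduced by you: for even $d$ the norm $\|\cdot\|_2$ as literally defined in \eqref{eq:spnorm} (without an absolute value) can be negative, and both the lemma and \eqref{eq:tensbound} should be read with $\max_{\|u\|=1}\bigl|\langle\T,u^{\otimes d}\rangle\bigr|$; under that standard reading your argument goes through verbatim.
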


\begin{proof}
For $d=2,3$ this appears in
\cite[Theorem~4.3]{qi2011best}. The author conjectured that the same bound
holds for all $d\geq 4$ \cite[Conjecture~2]{qi2011best}. As explained in
the discussion following that conjecture, his Conjecture~1 together with
\cite[Theorem~3.1]{qi2011best} imply the result. Conjecture~1 is \eqref{eq:tensbound}, which was established in
\cite[Theorem~1]{friedland2013best}.
\end{proof}

\subsection{Proof of Proposition~\ref{prop:param-error}}

By first-order optimality,
\[
\mathrm{grad}\,F_N(\widehat Q)=0,
\qquad
\mathrm{grad}\,F(Q)=0 \quad \text{for every } Q\in Q^\star{\rm SP}(n).
\]
Let $Q_0\in Q^\star{\rm SP}(n)$ be the orbit point minimizing
$\|\widehat Q-Q_0\|_{\rm F}$, and let
$\gamma:[0,1]\to\mathrm O(n)$ be the geodesic from $Q_0$ to $\widehat Q$
with tangent $\Delta=\dot\gamma(0)\in T_{Q_0}\mathrm O(n)$.
A Taylor expansion of the empirical gradient along $\gamma$
 gives
\[
0=\mathrm{grad}\,F_N(\widehat Q)
=\mathrm{grad}\,F_N(Q_0)+\mathrm{Hess}F_N(Q_0)[\Delta]+o(\|\Delta\|_{\rm F}).
\]
Since $\mathrm{grad}\,F(Q_0)=0$, we obtain
\[
\underbrace{\bigl(\mathrm{grad}\,F_N(Q_0) \! - \! \mathrm{grad}\,F(Q_0)\bigr)}_{\text{score fluctuation}}
\!+\!\underbrace{\mathrm{Hess}F(Q_0)[\Delta]}_{\text{population curvature}}
\!+\!\underbrace{\bigl(\mathrm{Hess}F_N(Q_0) \! - \! \mathrm{Hess}F(Q_0)\bigr)[\Delta]}_{\text{Hessian fluctuation}}
\!+ o(\|\Delta\|_{\rm F})=\!0.
\]

Both fluctuation terms are linear in 
$\T=\widehat\mu_d-\mu_d$. Indeed, at $Q_0$ we have $g(Q_0)=0$, so, using Lemma~\ref{lem:A1},
\begin{align*}
\mathrm{grad}\,F_N(Q_0)-\mathrm{grad}\,F(Q_0)
& =2\Pi_U\Big(\bigl(Dg_N(Q_0)^\ast [g_N(Q_0)]-Dg(Q_0)^\ast [g(Q_0)]\bigr)\Big)\\ 
& =2\,\Pi_U\Big(Dg_N(Q_0)^\ast\![g_N(Q_0)-g(Q_0)]\Big).
\end{align*} 
As before,
\[
\|g_N(Q_0)-g(Q_0)\|_{\rm F}
=\|\Pi_{V^\perp}(Q_0^{\top}\!\bullet(\widehat\mu_d-\mu_d))\|_{\rm F}
\le \|\widehat\mu_d-\mu_d\|_{\rm F}.
\]
Insert and subtract $Dg(Q_0)$:
\[
Dg_N(Q_0)^\ast\,[g_N-g]
= Dg(Q_0)^\ast\,[g_N-g]
+\big(Dg_N(Q_0)-Dg(Q_0)\big)^\ast\,[g_N-g].
\]
Taking Frobenius norms and using Lemma~\ref{lem:A3} together with
$\|g_N(Q_0)-g(Q_0)\|_{\rm F}\le \|\widehat\mu_d-\mu_d\|_{\rm F}$, we obtain
\begin{align*}
\|\mathrm{grad}\,F_N(Q_0)-\mathrm{grad}\,F(Q_0)\|_{\rm F}
&\le 2\,\|Dg(Q_0)\|\,\|g_N(Q_0)-g(Q_0)\|_{\rm F}\\
&\quad + 2\,\|Dg_N(Q_0)-Dg(Q_0)\|\,\|g_N(Q_0)-g(Q_0)\|_{\rm F}\\
&\le 2d\,\|\mu_d\|_{\rm F}\,\|\widehat\mu_d-\mu_d\|_{\rm F}
\;+\; 2d\,\|\widehat\mu_d-\mu_d\|_{\rm F}^{\,2}.
\end{align*}
Similarly, differentiating $\mathrm{grad}\,F_N(Q)=2\,Dg_N(Q)^\ast g_N(Q)$ and
using $g(Q_0)=0$ gives
\[
\mathrm{Hess}F_N(Q_0)[\Delta]
=2\,Dg_N(Q_0)^\ast Dg_N(Q_0)[\Delta],
\qquad
\mathrm{Hess}F(Q_0)[\Delta]
=2\,Dg(Q_0)^\ast Dg(Q_0)[\Delta].
\]
We write
\[
\mathrm{Hess}F_N(Q_0)-\mathrm{Hess}F(Q_0)
=2\Big( (Dg_N-Dg)^\ast Dg + Dg^\ast (Dg_N-Dg) + (Dg_N-Dg)^\ast (Dg_N-Dg)\Big),
\]
all evaluated at $Q_0$.
Hence, by Lemma~\ref{lem:A3},
\begin{align*}
\|\mathrm{Hess}F_N(Q_0)-\mathrm{Hess}F(Q_0)\|
&\le 4\|Dg(Q_0)\|\|Dg_N(Q_0)-Dg(Q_0)\|
+ 2\|Dg_N(Q_0)-Dg(Q_0)\|^2\\
&\le 4d^2\,\|\mu_d\|_{\rm F}\,\|\widehat\mu_d-\mu_d\|_{\rm F}
+ 2d^2\,\|\widehat\mu_d-\mu_d\|_{\rm F}^{\,2}.
\end{align*}
Thus, up to a quadratic remainder, both fluctuations are linear in $\|\widehat\mu_d-\mu_d\|_{\rm F}$ with constants depending only on $d$ and $\|\mu_d\|_{\rm F}$.
Invoking the curvature condition on $T_{Q_0}\mathrm O(n)$,
\[
\langle \mathrm{Hess}F(Q_0)[\Delta],\Delta\rangle \ \ge\ \kappa\,\|\Delta\|_{\rm F}^2,
\]
and using the Taylor expansion above, we obtain (for $\|\widehat\mu_d-\mu_d\|_{\rm F}$ small enough),
\[
\kappa\,\|\Delta\|_{\rm F}
\ \lesssim\
\|\mathrm{grad}\,F_N(Q_0)-\mathrm{grad}\,F(Q_0)\|_{\rm F}
\;+\;\|\mathrm{Hess}F_N(Q_0)-\mathrm{Hess}F(Q_0)\|\,\|\Delta\|_{\rm F}.
\]
Absorbing the last term into the left-hand side and using the bounds above, we get
\[
\|\Delta\|_{\rm F}
\ \lesssim\ \frac{d\,\|\mu_d\|_{\rm F}}{\kappa}\,\|\widehat\mu_d-\mu_d\|_{\rm F}
\;+\; \frac{d}{\kappa}\,\|\widehat\mu_d-\mu_d\|_{\rm F}^{\,2}.
\]
The tangent space at $Q_0\in \mathrm O(n)$ is
$T_{Q_0}\mathrm O(n)=\{Q_0\Omega:\;\Omega^\top=-\Omega\}$.
For any $\widehat Q$ sufficiently close to $Q_0$, there exists a unique skew-symmetric
$\Omega$ such that
\[
\widehat Q = Q_0 \exp(\Omega).
\]
The geodesic $\gamma(t)=Q_0\exp(t\Omega)$ then connects $Q_0$ to $\widehat Q$,
with initial tangent $\Delta=\dot\gamma(0)=Q_0\Omega\in T_{Q_0}\mathrm O(n)$.
Since multiplication by $Q_0$ is an isometry,
\[
\|\Delta\|_{\rm F} = \|\Omega\|_{\rm F}.
\]
Moreover,
\[
\widehat Q - Q_0 = Q_0\big(\exp(\Omega)-I\big)
= Q_0\Big(\Omega+\tfrac{1}{2}\Omega^2+\cdots\Big),
\]
so
\[
\|\widehat Q-Q_0\|_{\rm F} \;\le\; \|\Omega\|_{\rm F} + O(\|\Omega\|_{\rm F}^2)
= \|\Delta\|_{\rm F} + O(\|\Delta\|_{\rm F}^2).
\]
Thus, for sufficiently small $\|\Delta\|_{\rm F}$,
\[
\|\widehat Q-Q_0\|_{\rm F} \;\lesssim\; \|\Delta\|_{\rm F}.
\]
Combining with the bound on $\|\Delta\|_{\rm F}$ obtained above yields
\[
\|\widehat Q-Q_0\|_{\rm F}
\ \lesssim\ \frac{d\,\|\mu_d\|_{\rm F}}{\kappa}\,\|\widehat\mu_d-\mu_d\|_{\rm F}
\;+\; \frac{d}{\kappa}\,\|\widehat\mu_d-\mu_d\|_{\rm F}^{\,2}.
\]
Since $Q_0\in Q^\star{\rm SP}(n)$ was chosen to minimize $\|\widehat Q-Q\|_{\rm F}$,
the result follows.

The above argument shows that inequality
\eqref{eq:mainFrobBound} holds deterministically, given a realization of
$\widehat\mu_d$. By Proposition~\ref{prop:mu4-ml}, the deviation
$\|\widehat\mu_d-\mu_d\|_2$ admits a sharp concentration bound with
probability at least $1-e^{-\tau}$. Using Lemma~\ref{lem:frobinml} to
relate Frobenius and spectral norms then yields the high-probability
bound stated in Proposition~\ref{prop:param-error}.

\end{document}